\newtheorem{theorem}{Theorem}[section]
\newtheorem{lemma}[theorem]{Lemma}
\newtheorem{prop}[theorem]{Proposition}
\theoremstyle{definition}
\newtheorem{definition}[theorem]{Definition}
\newtheorem{example}[theorem]{Example}
\newtheorem{cor}[theorem]{Corollary}
\theoremstyle{remark}
\newtheorem{remark}[theorem]{Remark}
\numberwithin{equation}{section}
\newcommand{\mf}{\mathfrak}
\newcommand{\mc}{\mathcal}
\newcommand{\C}{\mathbb{C}}
\newcommand{\Z}{\mathbb{Z}}
\newcommand{\Span}{{\rm span}}
\newcommand{\Aut}{{\rm Aut}}
\newcommand{\Cur}{{\rm Cur}}
\newcommand{\Vir}{{\rm Vir}}
\def \bp{\begin{prop}\label}
\def \ep{\end{prop}}
\def \bt{\begin{theorem}\label}
\def \et{\end{theorem}}
\begin{document}

\title{Classification of rank two Lie conformal algebras}

\author{Rekha Biswal}
\address{Max Planck Institute for Mathematics, Bonn, Germany}
\email{rekha@mpim-bonn.mpg.de}

\author{Abdelkarim Chakhar}
\address{ D\'epartement de math\'ematiques et de statistique, Universit\'e Laval,  Qu\'ebec, QC, Canada}
\email{abdelkarim.chakhar.1@ulaval.ca}

\author{Xiao He}
\address{School of Mathematics (Zhuhai), Sun Yat-Sen University, Zhuhai, China.}
\email{hexiao8@mail.sysu.edu.cn(corresponding email)}



\keywords{Rank two Lie conformal algebras, automorphism groups}
\subjclass[2010]{17B65, 17B68}

\begin{abstract}
We give a complete classification (up to isomorphism) of rank two Lie conformal algebras,  and determine their automorphism groups. 
\end{abstract}

\maketitle

\section*{Introduction}

Lie conformal algebras, and their super versions, were first introduced rigorously by V. Kac \cite{Kac96} around 1996 .  Their structure theory, representation theory and cohomology theory have been studied extensively by V. Kac and his collaborators in \cite{BakKacVoronov, ADA&Kac98, Fattori&Kac, FattoriKacRetakh, Kac96}. A non-linear version of Lie conformal algebras was further studied in \cite{BakDeSole, DeSoleKac}. On one hand, Lie conformal algebras axiomatize the singular part of the operator product expansion (OPE) of chiral fields in a two-dimensional conformal field theory. 
On the other hand, they have been proved to be useful in the study of infinite-dimensional Lie algebras satisfying the locality property \cite{KacLocality}.  Lie conformal algebras are  related to vertex algebras in a similar manner as Lie algebras are related to their universal enveloping algebras \cite{BakalovKacFieldalg, Kac96}.  

Besides the applications to other areas, Lie conformal algebras themselves are quite interesting from a purely algebraic point of view.  As algebraic objects, their definition has a lot of resemblance to that of Lie algebras. A Lie algebra  is a vector space endowed with a Lie bracket satisfying skew-symmetry and the Jacobi identity. In the case of Lie conformal algebras, several things need to be modified. First, the base ring is replaced by the polynomial ring $\C[\partial]$ in in one variable. The base space $R$ is replaced by  a module over $\C[\partial]$ and Lie bracket is replaced by lambda-bracket $[\cdot_\lambda \cdot]: R\otimes R \rightarrow \C[\lambda]\otimes R$, for some indeterminate $\lambda$. Then the corresponding axioms of sesquilinearity,  skew-symmetry and Jacobi identity  for the lambda-bracket versions are required to be satisfied (see  \Cref{def:LCA}).

A Lie conformal algebra $R$ is said to be finite if it is finitely generated as a $\C[\partial]$-module, and it is said to be of rank $n$ if it is free of rank $n$ as a $\C[\partial]$-module. As in the case of Lie algebras, the corresponding notions of simple, semisimple, nilpotent and solvable Lie conformal algebras can be defined.  A complete classification of finite semisimple Lie conformal algebras was done in \cite{ADA&Kac98}. A super version of the classification was done soon after in \cite{Fattori&Kac}.

Let $R$ be a $\C[\partial]$-module of rank $n$. Defining a Lie conformal algebra structure on $R$ is equivalent to finding a set of polynomials, which we call the structure polynomials analogous to the structure constants in the case of Lie algebras, satisfying certain  equations, i.e., the structure polynomial versions for skew-symmetry and Jacobi identity equations (see (\ref{eq:skewsymmetryranktwo}), (\ref{eq:Jacobigeneral}) for the rank two case, or (2.7.8) and (2.7.9) in \cite{Kac96} for the general case). In \cite{Kac96}, V. Kac and M. Wakimoto classified the rank one Lie conformal algebras by giving explicit solutions of those equations. Further, V. Kac surmised in \cite{Kac96} that ``It is clearly impossible to solve those  equations directly for $n\geq 2$". In this paper, we give a solution to the rank two case.

Our main result, \Cref{maintheorem}, may be stated briefly as follows.   Let $R$ be a non-semisimple rank two  Lie conformal algebra:

\begin{enumerate}
	\item If $R$ is  nilpotent, then $R$ is parametrized by a skew-symmetric polynomial $Q(\lambda, \partial)$, i.e. $Q(\lambda, \partial)=-Q(-\lambda-\partial, \partial)$. If $R$ is solvable but not nilpotent, then $R$ is parametrized by a non-zero polynomial $a(\lambda)$ (\Cref{nilpotent}).
	
 	\item If $R$ is not solvable, then either $R$ is the direct sum of a rank one commutative Lie conformal algebra and the Virasoro Lie conformal algebra (\Cref{semidirect}), or it belongs to a class of Lie conformal algebras parametrized by three parameters  $c, d,\text{ and } Q_c(\lambda, \partial))$, where $c, d\in \C$ are constants and $Q_c(\lambda, \partial)$ is a skew-symmetric polynomial. Moreover,  $Q_c(\lambda, 
 	\partial)\neq 0$ only when $(c, d)\in \{(1, 0), (0, 0), (-1, 0), (-4, 0), (-6, 0)\}$. In these cases, we document the polynomials $Q_c(\lambda, \partial)$ explicitly in the table of \Cref{maintheorem}.
\end{enumerate}

The organization of the present paper is as follows. In \cref{sec:1}, we recall the definition of a Lie conformal algebra and give some concrete examples. In \cref{sec:2}, we give a complete classification of rank two Lie conformal algebras.  We compute the automorphism groups  of rank two Lie conformal algebras  in \cref{sec:3}. All vector spaces and tensor products are considered over the complex numbers $\C$. 
\medskip

\emph{Acknowledgements.} X. He would like to thank the China Scholarship Council (File No.201304910374) and l'Institut des sciences math\'ematiques for their financial support. The three authors also gratefully acknowledge funding received from the NSERC Discovery Grant of their research supervisor (Michael Lau). The authors would like to thank Prof. V. Kac for valuable discussion after the completion of the present paper and also for bringing \cite{Kacnotes, Ritt} to their attention. 

\bigskip

\section{Preliminaries}\label{sec:1}

\begin{definition}[\cite{Kac96})]\label{def:LCA}
A {\itshape Lie conformal algebra} is 
a  $\C[\partial]$-module $R$ 
endowed with a $\C$-linear map $R\otimes R \rightarrow \C[\lambda]\otimes R$, 
denoted by $a\otimes b \mapsto [a _\lambda  b]$ 
and called the $\lambda$-bracket, satisfying the following axioms:  for all $a,b,c\in R$, we have
\begin{align}
&[\partial a _\lambda b] = -\lambda[a  _\lambda   b] 
  ,\quad [a _\lambda \partial b] = (\lambda+\partial)[a _\lambda  b], \qquad \mbox{\tag*{(sesquilinearity)}} \\
&[b _\lambda   a]= -[a  _{-\lambda-\partial} b] ,  \qquad \qquad \qquad \qquad \qquad \qquad \mbox{\tag*{(skew-symmetry)}} \\
&[a _\lambda [b _\mu  c]]-[b _\mu [a _\lambda c]] =
   [[a _\lambda  b]  _{\lambda+\mu}  c]. \qquad \qquad \qquad  \mbox{\tag*{(Jacobi identity)}}
\end{align}

\end{definition}
\bigskip

The sesquilinearity implies that,  $\mbox{~for all~} a, b\in R \mbox{~and~} f(\partial), g(\partial)\in \C[\partial]$, we have
\begin{equation}\label{eq:sesquilinearity}
[f(\partial)a_\lambda g(\partial)b]=f(-\lambda)g(\lambda+\partial)[a_\lambda b].
\end{equation}

Given a Lie conformal algebra $R$, for each $n\in \{0,1,2,\cdots\}$, the \emph{$n$-th product} is defined as a $\C$-linear product 
$R\otimes R \rightarrow  R$  and denoted by
$a\otimes b \mapsto a_{(n)} b$, which is encoded in the $\lambda$-bracket as:
\begin{equation}\label{nthprod}
[a  _\lambda  b]=\sum_{n\in\Z_+}\lambda^{(n)}a_{(n)}b \ ,
\end{equation}
where we use the notation $\lambda^{(n)}=\frac{\lambda^n}{n!}$ and $\lambda^{(n)}a_{(n)}b:=\lambda^{(n)}\otimes a_{(n)}b$. Then we have the following equivalent definition.

\begin{definition}[\cite{Kac96}]\label{def:LCA2}
A Lie conformal algebra  is a $\C[\partial]$-module $R$ endowed with a $\C$-linear product $-_{(n)}-$ for each $n\in \Z_+$ satisfying the following axioms: for all $a, b, c\in R$ and $ m, n\in \Z_+$,
\begin{align*}
&(C0)\quad  a_{(n)}b=0 \mbox{~for~} n\gg 0, \\
&(C1) \quad \partial a_{(n)} b= -na _{(n-1)}b 
  ,\quad a _{(n)}\partial b=\partial(a_{(n)}b)+na _{(n-1)}b,\\
&(C2)\quad  b_{(n)}a= -\sum_{j\geq 0}(-1)^{n+j}\partial^{(j)}(a_{(n+j)}b), \quad\text{~here~} \partial^{(j)}=\dfrac{\partial^j}{j!}, \\
&(C3)\quad a _{(m)}(b _{(n)} c)-b _{(n)} (a _{(m)}c )= \sum_{j\geq 0}^m \binom{m}{j}(a_{(j)}b)_{(m+n-j)}c.
\end{align*}

\end{definition}

\begin{definition}
Let $R_1, R_2$ be two Lie conformal algebras. A Lie conformal algebra homomorphism $\varphi: R_1\rightarrow R_2$ is a $\C[\partial]$-module homomorphism which preserves the $\lambda$-brackets, i.e., $\varphi([a_\lambda b])=[\varphi(a)_\lambda \varphi(b)]$ for all $a, b\in R_1$, where it is understood that $\varphi$ commutes with multiplication by $\lambda$. 
\end{definition}

\begin{definition}
Let $R$ be a Lie conformal algebra, and $S\subseteq R$ a $\C[\partial]$-submodule. Then $S$ is called a {\em Lie conformal subalgebra} of $R$ if $a_{(n)}b\in S$ for all $a, b\in S, n\in \Z_+$. It is called a {\em Lie conformal ideal} of $R$ if $a_{(n)}b\in S$ for all $a\in R, b\in S, n\in \Z_+$, and we denote it by $S\vartriangleleft R$. By the  skew-symmetry ($C1$), an ideal is always two-sided.  
\end{definition}

\begin{lemma}\label{idealbracket}
	Let $R$ be a Lie conformal algebra and $S, T$ be two ideals of $R$. The subspace $[S_\lambda T]:=\Span_\C\{a_{(n)}b~|~ a\in S, b\in T, n\in \Z_{\geq 0}\}$ is an ideal of $R$. 
\end{lemma}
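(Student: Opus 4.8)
The plan is to verify the two defining properties of an ideal in turn: that $[S_\lambda T]$ is a $\C[\partial]$-submodule, and that it absorbs the $n$-th products with arbitrary elements of $R$. Since $[S_\lambda T]$ is by definition a $\C$-span, it is automatically a $\C$-subspace, and by the remark after the definition of ideal (skew-symmetry makes ideals two-sided) it suffices to check absorption from one side. Throughout I would argue on the generators $a_{(n)}b$ with $a\in S$, $b\in T$, $n\in\Z_+$, and extend to the whole span by $\C$-linearity.

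First I would establish closure under $\partial$. Applying the second identity in $(C1)$ gives $\partial(a_{(n)}b)=a_{(n)}\partial b - n\,a_{(n-1)}b$. Because $T$ is in particular a $\C[\partial]$-submodule, $\partial b\in T$, so $a_{(n)}\partial b$ is again a generator of $[S_\lambda T]$; and $a_{(n-1)}b$ is visibly a generator. Hence $\partial(a_{(n)}b)\in[S_\lambda T]$, and $[S_\lambda T]$ is a $\C[\partial]$-submodule.

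For the absorption property the key tool is the Jacobi identity $(C3)$. Given $c\in R$, $a\in S$, $b\in T$ and $m,n\in\Z_+$, I would rewrite $(C3)$ (with the substitution of $c$, $a$, $b$ into the three slots) as
\[
c_{(m)}(a_{(n)}b)=a_{(n)}(c_{(m)}b)+\sum_{j=0}^{m}\binom{m}{j}(c_{(j)}a)_{(m+n-j)}b.
\]
Now every summand on the right lies in $[S_\lambda T]$: since $T$ is an ideal, $c_{(m)}b\in T$, so $a_{(n)}(c_{(m)}b)$ is a generator with first argument in $S$ and second in $T$; and since $S$ is an ideal, $c_{(j)}a\in S$, so each $(c_{(j)}a)_{(m+n-j)}b$ is likewise a generator with first argument in $S$ and second in $T$. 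Therefore $c_{(m)}(a_{(n)}b)\in[S_\lambda T]$, and by $\C$-linearity $c_{(m)}x\in[S_\lambda T]$ for every $x\in[S_\lambda T]$.

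This argument is genuinely routine, and I do not expect a substantive obstacle beyond careful bookkeeping. The one point that must not be overlooked is that \emph{both} hypotheses are essential and are used separately: the ideal property of $T$ controls the term $a_{(n)}(c_{(m)}b)$, while the ideal property of $S$ controls the summation terms, and the $\C[\partial]$-module structure of $T$ is precisely what rescues the $\partial$-closure step.
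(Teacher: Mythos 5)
Your argument is correct and is exactly the one the paper has in mind: the paper's proof is a one-line appeal to the Jacobi identity $(C3)$, and your write-up simply supplies the routine details ($\partial$-closure via $(C1)$ and absorption via $(C3)$, using the ideal property of $T$ for the term $a_{(n)}(c_{(m)}b)$ and that of $S$ for the summation terms).
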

\begin{proof}
	This can be easily seen by the Jacobi identity $(C3)$ in \Cref{def:LCA2}. 
\end{proof}

If $I\subseteq R$ is an ideal, then there is a canonical way to define a Lie conformal algebra structure on the quotient $R/I$.  The {\em derived algebra} of $R$, which we denote by $R'$, is defined to be the $\C$-span of all elements of the form $a_{(n)}b$ with $a, b\in R, n\in \Z_+.$   We denote by $R^{(1)}=R$  and define $R^{(n)}=(R^{(n-1)})'$ inductively for $n\geq 2$, then we have the derived algebra series, which are all ideals by \Cref{idealbracket},
$$R\supseteq R^{(2)}\supseteq \cdots \supseteq R^{(n)}\supseteq R^{(n+1)}\supseteq \cdots.$$
Let $R^n=\Span_\C \{a^1_{(m_1)}a^2_{(m_2)}\cdots a^{n-1}_{(m_{n-1})}a^{n}~|~a^i\in R, m_i\in \Z_+\}$, then we have $R^{(n)}\subseteq R^n$ and the following series of ideals 
$$R\supseteq R^2\supseteq \cdots \supseteq R^{n}\supseteq R^{n+1}\supseteq \cdots.$$

\begin{definition} 
Let $R$ be a Lie conformal algebra. An element $a\in R$ is called {\em central} if $[a_\lambda b]=0$ for all $b\in R$, and $R$ is called \emph{commutative} if all elements of $R$ are central. If $R^{(n)}=0$ for some $n\geq 1$, $R$ is called \emph{solvable} and if $R^n=0$ for some $n\geq 1$, $R$ is called \emph{nilpotent}.  A Lie conformal algebra is called \emph{simple} if it is not commutative and has no proper ideals. It is called \emph{semisimple} if it contains no non-zero solvable ideals. 
\end{definition}

\begin{example}\label{Virasoro}
The {\em Virasoro Lie conformal algebra } is the rank one $\C[\partial]$-module $\Vir=\C[\partial]L$ with $[L_\lambda L]=(2\lambda+\partial)L.$ It is the unique non-commutative Lie conformal algebra of rank one. 
\end{example}

\begin{example}\label{Currents}
Given a finite dimensional Lie algebra $\mf g$,  $\Cur\, \mf g:=\C[\partial]\otimes \mf g$ can be given a Lie conformal algebra structure by defining $[a_\lambda b]=[a, b]$ for $a, b\in \mf g$, and it is called the {\em current Lie conformal algebra} associated to $\mf g$.  
\end{example}

\begin{example}\label{semidirectcurrentVirasoro}
The semi-direct sum of $\Vir$ with $\Cur\, \mf g$ is the Lie conformal algebra $R=\Vir\ltimes\Cur\, \mf g$ with
$[L_\lambda L]=(2\lambda+\partial)L, [L_\lambda c]=(\lambda+\partial)c$, and $[a_\lambda b]=[a, b]$ for all $a, b, c\in \mf g$.
\end{example}

In fact, the above three examples are sufficient to describe the classification of finite semisimple Lie conformal algebras. 
\begin{theorem}[\cite{ADA&Kac98}]\label{semisimpleLCA}
Any finite semisimple Lie conformal algebra is a finite direct sum of Lie conformal algebras of the following types:
\begin{itemize}
\item [(i)]  the finite current Lie conformal algebra $\Cur \,\mf g$  with $\mf g$ being simple,
\item[(ii)] the Virasoro Lie conformal algebra $\Vir$, 
\item[(iii)] the semidirect sum of $(i)$ and $(ii)$ as in \Cref{semidirectcurrentVirasoro}.
\end{itemize}
\end{theorem}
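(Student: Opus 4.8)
The plan is to linearize the problem by passing from the conformal algebra to an associated Lie algebra, where a classification is already available. To a finite Lie conformal algebra $R$ one attaches its \emph{annihilation Lie algebra} $\mc A(R)$: as a vector space it is built from $R\otimes\C[[t]]$ (with $\partial$ acting as $-\partial_t$), and the Lie bracket of the coefficients $a_{(m)}$ is governed precisely by axiom $(C3)$ of \Cref{def:LCA2}, namely $[a_{(m)},b_{(n)}]=\sum_{j\geq 0}\binom{m}{j}(a_{(j)}b)_{(m+n-j)}$. Since $R$ is free of finite rank over $\C[\partial]$, $\mc A(R)$ is a \emph{linearly compact} topological Lie algebra built from the single formal variable $t$, and the conformal structure of $R$ can be recovered from $\mc A(R)$ together with the derivation $\partial$. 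Ideals, solvability, and semisimplicity of $R$ translate into corresponding properties of $\mc A(R)$, so the problem becomes one about linearly compact Lie algebras.

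First I would dispose of the simple case. If $R$ is finite and simple, then, after removing its centre, $\mc A(R)$ is an infinite-dimensional simple linearly compact Lie algebra. The Cartan--Guillemin--Singer--Sternberg classification lists all such algebras as Cartan-type algebras of formal vector fields together with current (affine) types; because $R$ involves only the single variable $\partial$, the higher-rank Cartan series are excluded and only two possibilities survive: the Witt algebra $W=\C[[t]]\partial_t$, which returns the Virasoro conformal algebra $\Vir$ of \Cref{Virasoro}, and the current type $\mf g[[t]]$ with $\mf g$ finite-dimensional simple, which returns $\Cur\,\mf g$ of \Cref{Currents}. This establishes that the only finite simple Lie conformal algebras are $\Vir$ and $\Cur\,\mf g$.

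Next I would assemble a general semisimple $R$ from simple pieces. Using \Cref{idealbracket} and the absence of nonzero solvable (in particular abelian) ideals, I would show that the sum of the minimal ideals of $R$ is direct, exhausts $R$, and that each minimal ideal is simple, hence $\Cur\,\mf g$ or $\Vir$ by the previous step. The genuinely conformal phenomenon --- absent for ordinary Lie algebras --- is that $R$ need not be the direct sum of its minimal ideals: a copy of $\Vir$ may act nontrivially on a current ideal $\Cur\,\mf g$ by $[L_\lambda a]=(\lambda+\partial)a$, producing the indecomposable but semisimple semidirect sum of \Cref{semidirectcurrentVirasoro}. To finish I would classify the ways a $\Vir$ can act on $\Cur\,\mf g$ by conformal derivations compatible with semisimplicity; a computation in the relevant space of derivations (equivalently, low-degree conformal cohomology) shows the action in \Cref{semidirectcurrentVirasoro} is the only one, giving type (iii) and no others.

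The main obstacle is the simple-case classification: setting up the dictionary between finite conformal algebras and linearly compact Lie algebras precisely enough to transport simplicity, and then invoking Cartan's classification and checking that the single-variable constraint eliminates every candidate except $W$ and the current type. The secondary difficulty is the extension analysis of type (iii), where one must rule out all nonstandard gluings of $\Vir$ onto $\Cur\,\mf g$; this is exactly where the conformal setting departs from the classical Levi decomposition and complete-reducibility picture for semisimple Lie algebras.
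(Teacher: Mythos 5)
The paper offers no proof of \Cref{semisimpleLCA}; it is imported verbatim from D'Andrea and Kac \cite{ADA&Kac98}, so there is no internal argument to compare against. Your outline is essentially a reconstruction of the strategy of that reference: pass to the annihilation algebra, reduce the classification of finite simple conformal algebras to the classification of infinite-dimensional linearly compact Lie algebras in one formal variable, and then assemble a general semisimple $R$ from minimal ideals while allowing the non-split action of $\Vir$ on $\Cur\,\mf g$ that produces type (iii). The skeleton is right, and you correctly flag where the real work sits (the dictionary between $R$ and $\mc A(R)$, and the derivation/cohomology computation ruling out nonstandard gluings of $\Vir$ onto $\Cur\,\mf g$).

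One step, however, is stated in a form that is actually false and would derail the simple-case argument if taken literally: for $R$ simple, $\mc A(R)$ modulo its centre need \emph{not} be a simple Lie algebra. Indeed $\mc A(\Cur\,\mf g)\cong \mf g\otimes\C[[t]]$ is centreless (for $\mf g$ simple) yet carries the descending chain of ideals $\mf g\otimes t^k\C[[t]]$, so your criterion would wrongly exclude the current algebras from the simple list. The correct dictionary, and the one used in \cite{ADA&Kac98}, is coarser: simplicity of $R$ controls only the $\partial$-stable ideals of $\mc A(R)$ (equivalently, one works with the pair $(\mc A(R),\partial)$, the canonical filtration, and a growth estimate), and the Cartan--Guillemin classification is applied to an appropriate irreducible quotient rather than to $\mc A(R)$ itself. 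With that correction the rest of your plan goes through as in the cited source, though both hard inputs --- the linearly compact classification and the analysis of conformal derivations of $\Cur\,\mf g$ --- remain to be carried out rather than merely named.
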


The goal of this paper is to classify rank two Lie conformal algebras.  Let $R=\C[\partial]X^1\oplus \C[\partial] X^2$.  To define a Lie conformal algebra structure on $R$, it is enough to define the $\lambda$-brackets of the basis elements $\{X^1, X^2\}$, namely, 
\begin{align}\label{eq:skewsymmetrygeneral}
[X^i  _\lambda  X^j]&=Q_{i, j}^k(\lambda, \partial)X^k, \mbox{~where~} Q_{i, j}^k(\lambda, \partial)\in \C[\lambda, \partial] \mbox{~and~}  i, j, k \in \{1, 2\},
\end{align}    
such that the skew-symmetry property for the pairs $(X^i, X^j)$  and the Jacobi identities for the triples $(X^i, X^j, X^k)$  are all satisfied. By sesquilinearity,  we can then extend the $\lambda$-brackets to $R$. We will call the polynomials $Q_{i, j}^k(\lambda, \partial)$ the \emph{structure polynomials} of $R$. They are similar to the structure constants of a Lie algebra under a chosen basis and completely determine the Lie conformal algebra structure on $R$.

Let us recall here the classification of rank one Lie conformal algebras following \cite{Kac96}. Let $R=\C[\partial]L$ be a rank one Lie conformal algebra, with $[L_\lambda L]=f(\lambda, \partial)L$ for some $f(\lambda, \partial)$. Then by skew-symmetry and Jacobi identity, we have \begin{equation*}\label{skewsymmetryrankone}
f(\lambda, \partial)=-f(-\lambda-\partial,\partial),
\end{equation*}  
\begin{equation}\label{Jacobirankone}
f(\mu, \lambda+\partial)f(\lambda, \partial)-f(\lambda, \mu+\partial)f(\mu, \partial)=f(\lambda, -\lambda-\mu)f(\lambda+\mu, \partial).
\end{equation}
Let $\deg_\partial f(\lambda, \partial)=n$. If $n\geq 2$, an easy calculation shows that the degree with respect to $\partial$ on the left hand side of (\ref{Jacobirankone}) is $2n-1$, while that on the right hand side is $n$, which is impossible since $2n-1\neq n$. So $n\leq 1$.  If $n=0$,  skew-symmetry implies that $f=0$, so we have the commutative Lie conformal algebra. If $n=1$, then  $f(\lambda, \partial)=(2\lambda+\partial)\alpha$  for some non-zero  constant $\alpha\in \C^\times$ by  skew-symmetry. We can then do change of basis to set $\alpha=1$, i.e., we have the Virasoro Lie conformal algebra. So a rank one Lie conformal algebra is either commutative or the Virasoro Lie conformal algebra.

\begin{definition}
A polynomial $f(x, y)\in \C[x, y]$ is said to be {\em skew-symmetric} if it satisfies the equation $$f(x, y)=-f(-x-y, y).$$  
\end{definition}
 Note that the polynomials $Q_{1, 1}^1(\lambda, \partial), Q_{1, 1}^2(\lambda, \partial), Q_{2, 2}^1(\lambda, \partial)$ and $Q_{2, 2}^2(\lambda, \partial)$ defined in (\ref{eq:skewsymmetrygeneral}) are all skew-symmetric. 
\begin{lemma}\label{Skew-symmetry}
If $f(x, y)\in \C[x, y]$ is a skew-symmetric polynomial,  then $f(x, y)=(2x+y)g(x^2+xy, y)$ for some polynomial $g$. 
\end{lemma}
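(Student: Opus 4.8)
The plan is to linearize the skew-symmetry relation by a change of variables that turns the defining involution into a sign flip in a single coordinate. Observe that the map $\sigma\colon (x,y)\mapsto(-x-y,y)$ extends to a $\C$-algebra involution of $\C[x,y]$ (one checks directly that $\sigma^2=\id$), and the skew-symmetry hypothesis says exactly that $\sigma(f)=-f$; thus $f$ is an anti-invariant of $\sigma$. The natural strategy is to diagonalize $\sigma$.

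To do this, I would set $u=2x+y$ and $v=y$. This linear substitution is invertible over $\C$, with $x=(u-v)/2$ and $y=v$, so $\C[x,y]=\C[u,v]$ and no denominators are introduced. In these coordinates $\sigma$ acts by $u\mapsto -u$ and $v\mapsto v$. Writing $f=\sum_k a_k(v)\,u^k$ with $a_k\in\C[v]$, the relation $\sigma(f)=-f$ becomes $a_k=-(-1)^k a_k$ for every $k$, forcing $a_k=0$ whenever $k$ is even. Hence only odd powers of $u$ appear, and factoring out one $u$ gives $f=u\,h(u^2,v)$ for some polynomial $h$.

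Finally, I would translate back to the original variables. Since $u^2=(2x+y)^2=4(x^2+xy)+y^2$ and conversely $x^2+xy=(u^2-v^2)/4$, the subrings coincide: $\C[u^2,v]=\C[x^2+xy,y]$. Therefore $h(u^2,v)=g(x^2+xy,y)$ for some $g\in\C[s,t]$, and we obtain $f=(2x+y)\,g(x^2+xy,y)$, as claimed. I do not expect a genuine obstacle here; the only points requiring care are that the substitution is invertible over $\C$ (so that $\C[x,y]=\C[u,v]$ exactly), and that parity guarantees the surviving factor is a polynomial in $u^2$ rather than merely in $u$, which is precisely what makes $g$ a polynomial.
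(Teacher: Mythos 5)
Your proof is correct and complete. It proves the same two facts as the paper --- divisibility by $2x+y$, and that the cofactor lies in $\C[x^2+xy,y]$ --- but by a single unified mechanism rather than two separate arguments. The paper first checks that $y=-2x$ is a root of $f$ (so $(2x+y)\mid f$), then observes that the quotient $g$ is invariant under $x\mapsto -x-y$ and cites ``an easy application of the invariant theory'' to conclude $g\in\C[x^2+xy,y]$. You instead diagonalize the involution outright via the invertible substitution $u=2x+y$, $v=y$, so that anti-invariance becomes a parity condition on the powers of $u$; the factor of $u$ and the membership of the cofactor in $\C[u^2,v]=\C[x^2+xy,y]$ then both drop out of the same computation. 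What your route buys is self-containedness: the invariant-theory step that the paper leaves as a citation is replaced by the explicit identity $u^2=4(x^2+xy)+y^2$, which exhibits the generators of the invariant ring directly. The paper's version is marginally shorter on the page but relies on the reader supplying that last step. All the details you flag as needing care (invertibility of the substitution over $\C$, and parity forcing a polynomial in $u^2$ rather than $u$) are handled correctly.
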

\begin{proof}
	It is easy to see that $(2x+y)|f(x, y)$. Indeed, plugging $y=-2x$ in $f(x, y)$,  we have $f(x, -2x)=-f(x, -2x)$ by skew-symmetry, i.e., $f(x, -2x)=0$.  Let $f(x, y)=(2x+y)g(x, y)$, then $g(x, y)=g(-x-y, y)$. Hence $g(x, y)$ is invariant under the transformation $x\mapsto -x-y, y\mapsto y$. Thus an easy application of the invariant theory implies that $g(x, y)\in \C[x^2+xy, y]$. 
\end{proof}

\begin{cor}\label{keycorollary}
Let $f(x, y)$ be a skew-symmetric polynomial. If $\deg_y f(x, y)=n$, then the coefficient of $y^n$ is a polynomial in $x$ of degree $\leq n-1$.  
\end{cor}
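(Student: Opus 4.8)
The plan is to read this off directly from the structural description furnished by \Cref{Skew-symmetry}, to which this is a corollary. First I would invoke that lemma to write $f(x,y)=(2x+y)\,g(x^2+xy,\,y)$ for some $g\in\C[u,v]$, and then expand $g$ into monomials, say $g(u,v)=\sum_{i,j}c_{ij}u^iv^j$. Substituting $u=x^2+xy=x(x+y)$ and $v=y$ yields
\[
f(x,y)=(2x+y)\sum_{i,j}c_{ij}\,x^i\,(x+y)^i\,y^j .
\]

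The key step is to isolate, within each summand, its term of highest degree in $y$. Since the leading $y$-term of $(2x+y)(x+y)^i$ is $y^{i+1}$ (with coefficient $1$, coming from the product of $y$ in $2x+y$ with $y^i$ in $(x+y)^i$), the summand indexed by $(i,j)$ contributes the monomial $c_{ij}\,x^i\,y^{\,i+j+1}$ as its top power of $y$, every other term of that summand having strictly smaller $y$-degree. Consequently $n=\deg_y f=\max\{\,i+j+1 : c_{ij}\neq 0\,\}$, and the coefficient of $y^n$ in $f$ is precisely $\sum_{i+j=n-1}c_{ij}\,x^i$.

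It then remains only to bound the degree in $x$, which is immediate: every exponent $i$ occurring in that sum satisfies $i\le i+j=n-1$, so the coefficient of $y^n$ is a polynomial in $x$ of degree at most $n-1$, as claimed. I do not anticipate a real obstacle here, since the argument is essentially bookkeeping. The only points warranting care are verifying that distinct pairs $(i,j)$ sharing a common value of $i+j$ contribute the distinct $x$-powers $x^i$ (so that summing the leading terms can neither push the $y$-degree past $n$ nor create spurious cancellation in the count), and observing that any cancellation among the coefficients $c_{ij}x^i$ can only lower the $x$-degree, never raise it above $n-1$.
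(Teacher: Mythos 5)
Your proposal is correct and follows essentially the same route as the paper: both invoke \Cref{Skew-symmetry} to write $f(x,y)=(2x+y)g(x^2+xy,y)$, expand $g$ into monomials, and observe that the coefficient of $y^n$ is $\sum_{i+j=n-1}c_{ij}x^i$, which has $x$-degree at most $n-1$. Your extra remarks on non-cancellation of the leading $y$-terms are a slight refinement of bookkeeping the paper leaves implicit, not a different argument.
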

\begin{proof}
By skew-symmetry, we can write $f(x, y)=(2x+y)g(x^2+xy, y)$ for some polynomial $g$, and $\deg_y g(x^2+xy, y)=n-1$.  As $\deg_y (x^2+xy)=1$, we can write $g(x^2+xy, y)=\sum_{i+j\leq n-1} b_{i,j}(x^2+xy)^i y^j$.  Thus the coefficient of $y^n$ in $f(x, y)$ is $\sum_{i=0}^{n-1}b_{i, n-1-i} x^i$, which is a polynomial in $x$ of degree less than or equal to $n-1$.
\end{proof}
\bigskip
\section{Classification of rank two Lie conformal algebras}\label{sec:2}

\subsection{\textbf{The first step towards the classification}}

For a rank two Lie conformal algebra $R$ with basis $\{X^1, X^2\}$ and structure polynomials $Q_{i, j}^k(\lambda, \partial)$, by the skew-symmetry and Jacobi identity axioms,  the structure polynomials must satisfy
\begin{equation}\label{eq:skewsymmetryranktwo}
Q_{i, j}^k(\lambda, \partial)=-Q_{j, i}^k(-\lambda-\partial, \partial) \mbox{~for~} i, j, k\in \{1, 2\},\end{equation}
\begin{align}\label{eq:Jacobigeneral}
\sum_{s=1}^2\left(Q_{j,k}^s(\mu, \lambda+\partial)Q_{i, s}^t(\lambda, \partial)-Q_{i,k}^s(\lambda, \mu+\partial)Q_{j, s}^t(\mu, \partial)\right)\nonumber\\
=\sum_{s=1}^2Q_{i, j}^s(\lambda, -\lambda-\mu)Q_{s, k}^t(\lambda+\mu, \partial)
\end{align}
for all $i, j, k, t\in \{1, 2\}$. Although the Jacobi identity is $\mc S_3$-symmetric, and we just need to consider (\ref{eq:Jacobigeneral}) for the triples $(i,j,k)$ with $i\leq j\leq k$, there are still 6 equations containing 12 polynomials in two variables of the same type as (\ref{eq:Jacobigeneral}).

So it is important to simplify the calculations if we want to do the classification. From \Cref{semisimpleLCA}, we know that the only semisimple Lie conformal algebra of rank two is the direct sum of two Virasoro Lie conformal algebras. So we only need to  consider the non-semisimple ones. The following key lemma plays a crucial role in simplifying the calculations in our classification. 
\begin{lemma}\label{keylemma}
For a non-semisimple Lie conformal algebra $R$ of rank two, there exists a basis, say $\{A, B\}$, such that $[A_\lambda A]=0$, and  $\C[\partial]A\vartriangleleft R$ is an ideal.
\end{lemma}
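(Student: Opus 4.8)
The plan is to locate a non-zero commutative ideal inside $R$ and then ``saturate'' it to obtain a rank-one ideal that is a direct summand of $R$ and whose generator commutes with itself. First, since $R$ is non-semisimple, it contains a non-zero solvable ideal $I$ (when $R$ itself is solvable one simply takes $I=R$). Let $k\geq 1$ be the largest integer with $I^{(k)}\neq 0$; this exists because $I^{(1)}=I\neq 0$ while $I^{(n)}=0$ for $n\gg 0$ by solvability. By \Cref{idealbracket} and induction on $n$, each $I^{(n)}$ is an ideal of $R$: the base case is $I^{(1)}=I$, and $I^{(n+1)}=[I^{(n)}{}_\lambda I^{(n)}]$ is an ideal whenever $I^{(n)}$ is. Setting $A_0:=I^{(k)}$ then yields a non-zero ideal of $R$ with $[A_0{}_\lambda A_0]=I^{(k+1)}=0$, i.e. a non-zero \emph{commutative} ideal. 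As a $\C[\partial]$-submodule of the free module $R$, $A_0$ is free of rank $1$ or $2$.

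Next I would split into cases according to the rank of $A_0$. If $A_0$ has rank $2$, then $R/A_0$ is torsion, so every basis vector of $R$ admits a non-zero $\C[\partial]$-multiple lying in $A_0$; using \eqref{eq:sesquilinearity} together with the fact that $\C[\lambda,\partial]$ is an integral domain, the vanishing of all brackets on $A_0$ forces all brackets on $R$ to vanish. Hence $R$ is commutative and any basis $\{A,B\}$ satisfies the conclusion trivially, since then every bracket is zero and $\C[\partial]A\vartriangleleft R$. If $A_0$ has rank $1$, write $A_0=\C[\partial]w$ and pass to its saturation $\overline{A_0}=\{x\in R:q(\partial)x\in A_0\text{ for some nonzero }q\in\C[\partial]\}$. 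Over the PID $\C[\partial]$, the quotient $R/\overline{A_0}$ is torsion-free and finitely generated, hence free, so $\overline{A_0}$ is a rank-one direct summand; I would write $\overline{A_0}=\C[\partial]A$ and extend $A$ to a basis $\{A,B\}$ of $R$.

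It then remains to verify that $\C[\partial]A$ is an ideal and that $[A_\lambda A]=0$. For the ideal property, given $r\in R$ and $x\in\overline{A_0}$ with $q(\partial)x\in A_0$, I would use \eqref{eq:sesquilinearity} to write $q(\lambda+\partial)[r_\lambda x]=[r_\lambda q(\partial)x]\in\C[\lambda]\otimes A_0\subseteq\C[\lambda]\otimes\C[\partial]A$; expanding $[r_\lambda x]=\alpha(\lambda,\partial)A+\beta(\lambda,\partial)B$ in the basis $\{A,B\}$ and comparing $B$-coordinates gives $q(\lambda+\partial)\beta=0$, whence $\beta=0$ and $[r_\lambda x]\in\C[\lambda]\otimes\C[\partial]A$. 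For the self-commutativity, writing $w=p(\partial)A$ with $p\neq 0$, the commutativity of $A_0$ yields $0=[w_\lambda w]=p(-\lambda)p(\lambda+\partial)[A_\lambda A]$, and the integral-domain property of $\C[\lambda,\partial]$ forces $[A_\lambda A]=0$.

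The main obstacle is precisely the rank-one case: the commutative ideal $A_0$ produced by the derived series need not itself be a direct summand of $R$, so the crux is the saturation step---confirming that replacing $A_0$ by $\overline{A_0}$ preserves the ideal property while simultaneously producing a genuine basis element and a self-commuting generator. Once that is established, the remaining steps are routine applications of sesquilinearity and the factoriality of $\C[\lambda,\partial]$.
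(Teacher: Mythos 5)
Your proof is correct and follows essentially the same route as the paper: both take the last non-zero term of the derived series of a non-zero solvable ideal to get a commutative ideal of $R$, dispose of the rank-two case by sesquilinearity, and in the rank-one case upgrade the ideal to a rank-one direct summand whose generator self-commutes. The only cosmetic difference is that you package the last step as saturation, whereas the paper invokes an adapted basis $\{f(\partial)A, g(\partial)B\}$ over the PID $\C[\partial]$; your explicit verification that passing from $\C[\partial]w$ to $\C[\partial]A$ preserves the ideal property and forces $[A_\lambda A]=0$ fills in details the paper leaves implicit.
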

\begin{proof}
Since $R$ is not semisimple, there exists a non-zero  solvable ideal. Let $S\subseteq R$ be such an ideal. Then the derived series of $S$  must terminate somewhere, i.e.,  $S^{(n)}\neq 0$ but $S^{(n+1)}=0$ for some $n\in \Z_+$. Then $I:=S^{(n)}$ is commutative. By \Cref{idealbracket}, all these $S^{(i)}$'s are  ideals of $R$.  In particular, $I$ is an ideal of $R$. Since $\C[\partial]$ is a principal ideal domain and $I$ is a submodule of $R$,  we can find a basis of $R$, say $\{A, B\}$, such that $I$ is generated by $\{f(\partial)A, g(\partial)B\}$ for some $f(\partial), g(\partial)\in \C[\partial]$, where $f(\partial)$ is a non-zero polynomial and $f(\partial)$ divides $g(\partial)$. When $g(\partial)\neq 0$, i.e, $I$ is of rank two as a $\C[\partial]$-module,  using (\ref{eq:sesquilinearity}), it is straightforward to see that $R$ is commutative . When $g(\partial)=0$, we have $I=\C[\partial]f(\partial)A$ is a rank one abelian ideal. Thus $\C[\partial]A$ is an abelian ideal of $R$. 
\end{proof}

From now on, we assume that $R$ has a basis $\{A, B\}$ satisfying $[A_\lambda A]=0$ and  $\C[\partial]A \vartriangleleft R$.  Comparing the coefficients of $A$ in the Jacobi identity for the triple $(B, B, A)$, we have  
\begin{align}\label{JacobiBBA}
   Q_{B,A}^A(\mu,\lambda+\partial)Q_{B,A}^A&(\lambda,\partial)-Q_{B,A}^A(\lambda,\mu+\partial)Q_{B,A}^A(\mu,\partial)\nonumber\\
   &=Q_{B,B}^B(\lambda,-\lambda-\mu)Q_{B,A}^A(\lambda+\mu,\partial),
\end{align}
where we assume that $[X_\lambda Y]=Q_{X, Y}^A(\lambda, \partial)A+Q_{X, Y}^B(\lambda, \partial)B$ for $X, Y\in \{A, B\}$. 
Using the same argument as in the classification of rank one Lie conformal algebras, we can easily see that $\deg_\partial Q_{B, A}^A(\lambda, \partial)\leq 1$, so we may assume that $Q_{B, A}^A (\lambda, \partial)=a(\lambda)+b(\lambda)\partial$ for some polynomials $a(\lambda), b(\lambda)\in \C[\lambda]$.
Since $\C[\partial]A$ is an ideal, $\dfrac{R}{\C[\partial]A}\cong \C[\partial]B$ is a rank one Lie conformal algebra. Thus $[B_\lambda B]=\alpha(2\lambda+\partial)B \mod \C[\partial]A$ for some scalar $\alpha$. Without loss of generality (after doing some scaler multiplication), we can assume moreover that $\alpha\in \{0, 1\}$. 

\begin{lemma}\label{conditions}Let $R=\C[\partial]A\oplus \C[\partial]B$ be a rank two Lie conformal algebra with $[A_\lambda A]=0, [B_\lambda A]=(a(\lambda)+b(\lambda)\partial )A$ and $Q_{B, B}^B(\lambda, \partial)=(2\lambda+\partial)\alpha$,  for some polynomials $a(\lambda), b(\lambda)\in \C[\lambda]$ and $\alpha\in \{0, 1\}$. Then the following holds,
\begin{itemize}
	\item[(1)] if $\alpha=0$, then $b(\lambda)=0$,
	\item[(2)] if $\alpha=1$, then $b(\lambda) \in \{0,1\}$. Moreover, if $b(\lambda)=0$, then we have $a(\lambda)=0$; if $b(\lambda)=1$, then $\deg a(\lambda)\leq 1$.
\end{itemize}
\end{lemma}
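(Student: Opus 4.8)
The plan is to squeeze everything out of the single Jacobi identity (\ref{JacobiBBA}) for the triple $(B,B,A)$. Writing $P(\lambda,\partial):=Q_{B,A}^A(\lambda,\partial)=a(\lambda)+b(\lambda)\partial$ and using that $Q_{B,B}^B(\lambda,-\lambda-\mu)=(2\lambda-\lambda-\mu)\alpha=\alpha(\lambda-\mu)$, equation (\ref{JacobiBBA}) becomes the polynomial identity
\[
P(\mu,\lambda+\partial)P(\lambda,\partial)-P(\lambda,\mu+\partial)P(\mu,\partial)=\alpha(\lambda-\mu)P(\lambda+\mu,\partial)
\]
in the three variables $\lambda,\mu,\partial$. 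Since $P$ is affine in $\partial$, both sides are polynomials of degree at most two in $\partial$, and I would expand each side and compare the coefficients of $\partial^0,\partial^1,\partial^2$ separately.

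The coefficient of $\partial^2$ is $b(\lambda)b(\mu)$ on both sides, so it cancels automatically. A short expansion shows the coefficient of $\partial^1$ on the left is $b(\lambda)b(\mu)(\lambda-\mu)$, so after cancelling the common factor $\lambda-\mu$ (legitimate, since $\lambda-\mu$ is not a zero divisor in $\C[\lambda,\mu]$) one gets the first functional equation, and the constant term gives the second:
\[
b(\lambda)b(\mu)=\alpha\,b(\lambda+\mu),\qquad
\lambda\,a(\lambda)b(\mu)-\mu\,a(\mu)b(\lambda)=\alpha(\lambda-\mu)a(\lambda+\mu).
\]
These two equations in $a,b$ carry all the information I need. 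If $\alpha=0$ the first one reads $b(\lambda)b(\mu)\equiv 0$, forcing $b=0$; this is part (1). For $\alpha=1$ the first equation is $b(\lambda)b(\mu)=b(\lambda+\mu)$: setting $\lambda=\mu=0$ gives $b(0)\in\{0,1\}$, and $b(0)=0$ forces $b\equiv 0$ (put $\mu=0$). If $b(0)=1$, differentiating in $\lambda$ and setting $\lambda=0$ yields $b'(\mu)=b'(0)\,b(\mu)$, whose only polynomial solution with $b(0)=1$ is the constant $b\equiv 1$ (a nonzero $b'(0)$ would force the non-polynomial $e^{b'(0)\mu}$). This gives $b\in\{0,1\}$, the first assertion of part (2).

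For the ``moreover'' statement with $\alpha=1$ I would substitute these values of $b$ into the constant-term equation. If $b=0$ it collapses to $(\lambda-\mu)a(\lambda+\mu)=0$, hence $a=0$. If $b=1$ it becomes $\lambda\,a(\lambda)-\mu\,a(\mu)=(\lambda-\mu)a(\lambda+\mu)$; writing $n=\deg a$ with leading coefficient $c_n\neq 0$ and comparing the total-degree-$(n+1)$ homogeneous parts gives $\lambda^{n+1}-\mu^{n+1}=(\lambda-\mu)(\lambda+\mu)^n$, that is $\sum_{i=0}^{n}\lambda^i\mu^{n-i}=(\lambda+\mu)^n$; the coefficient of $\lambda^{n-1}\mu$ is $1$ on the left but $n$ on the right, forcing $n\le 1$. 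I expect the only delicate point to be the bookkeeping when expanding the degree-two identity and correctly isolating each coefficient of $\partial$; once the two functional equations are in hand, resolving them (the multiplicative-to-additive equation for $b$ and the leading-term comparison for $a$) is routine.
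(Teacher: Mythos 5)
Your proposal is correct and follows essentially the same route as the paper: it substitutes $Q_{B,A}^A=a(\lambda)+b(\lambda)\partial$ and $Q_{B,B}^B=(2\lambda+\partial)\alpha$ into the Jacobi identity (\ref{JacobiBBA}) and compares coefficients of powers of $\partial$ to obtain exactly the functional equations (\ref{ablambda1}) and (\ref{ablambda2}), which are then resolved case by case. You merely supply more detail than the paper does in solving $b(\lambda)b(\mu)=b(\lambda+\mu)$ and in the leading-term comparison showing $\deg a\leq 1$, and these details check out.
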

\begin{proof}
Set $Q_{B, A}^A(\lambda, \partial)=a(\lambda)+b(\lambda)\partial$ and $Q_{B, B}^B(\lambda, \partial)=(2\lambda+\partial)\alpha$ in 
(\ref{JacobiBBA}) yields
\begin{align}\label{eq:2.5}
(a(\mu)+b(\mu)&(\lambda+\partial))(a(\lambda)+b(\lambda)\partial)-(a(\lambda)+b(\lambda)(\mu+\partial))(a(\mu)+b(\mu)\partial)\nonumber\\ 
&=(\lambda-\mu)\alpha(a(\lambda+\mu)+b(\lambda+\mu)\partial). 
\end{align}
Comparing the coefficients of powers of $\partial$ in (\ref{eq:2.5}), we get	
\begin{equation}\label{ablambda1}
b(\lambda)b(\mu) = b(\lambda+\mu)\alpha.
\end{equation}
\begin{equation}\label{ablambda2}
b(\mu)a(\lambda)\lambda-b(\lambda)a(\mu)\mu=(\lambda-\mu)a(\lambda+\mu)\alpha
\end{equation}
When $\alpha=0$, we get $b(\lambda)=0.$ When $\alpha=1$, (\ref{ablambda1}) ensures that $b(\lambda)=k$ is a constant and $k^2=k$, i.e.,  $k\in \{0, 1\}$. When $b(\lambda)=0$, (\ref{ablambda2})  implies that $a(\lambda)=0$. When $b(\lambda)=1$, (\ref{ablambda2}) implies that $\deg a(\lambda)\leq 1$.
\end{proof}

Therefore,  any non-semisimple rank two Lie conformal algebra $R$ has a basis $\{A, B\}$, such that $[A_\lambda A]=0$,  $[B_\lambda A]=(a(\lambda)+b(\lambda)\partial)A$ for some polynomials $a(\lambda), b(\lambda)\in \C[\lambda]$,  and $[B_\lambda B]=Q(\lambda, \partial)A+(2\lambda+\partial)\alpha B$ for some skew-symmetric polynomial $Q(\lambda, \partial)$ and for $\alpha\in \{0, 1\}$.  \Cref{conditions} implies that we only need to consider the following three cases:
\begin{description}
	\item[\textbf{Case 1}] $\alpha=0$, $b(\lambda)=0$.
	\item[\textbf{Case 2}] $\alpha=1$, $a(\lambda)=b(\lambda)=0$.
	\item[\textbf{Case 3}] $\alpha=1$, $b(\lambda)=1$ and $a(\lambda)=c\lambda+d$ for some $c, d\in \C$. \\ 
 We divide \textbf{Case 3} into two subcases:
	\textbf{3a: $d\neq 0$} and 
	\textbf{3b: $d=0$}.
\end{description}
Note that for all these three cases, the Jacobi identities for the triples $(A, A, A)$,  $(A, A, B)$ and $(A, B, B)$ are easily verified. Moreover, the coefficients of $B$ in both sides of the Jacobi identity for the triple $(B, B, B)$ are equal. So we only need to consider the coefficients of $A$ in the Jacobi identity for the triple $(B,B,B)$, which gives us the following equation:
\begin{align}\label{Jacobifinalcase}
  \alpha(\lambda-\mu)&Q(\lambda+\mu,\partial)-Q(\lambda,-\lambda-\mu)\left(a(-\lambda-\mu-\partial)+b(-\lambda-\mu-\partial)\partial\right)\nonumber\\
   &=Q(\mu,\lambda+\partial)(a(\lambda)+b(\lambda)\partial)+\alpha(2\mu+\lambda+\partial)Q(\lambda,\partial)\nonumber\\&\quad -Q(\lambda,\mu+\partial)(a(\mu)+b(\mu)\partial)-\alpha(2\lambda+\mu+\partial)Q(\mu,\partial).
 \end{align}

Hence the classification of non-semisimple rank two Lie conformal algebras is equivalent to the classification of the quadruples $(a(\lambda), b(\lambda), \alpha, Q(\lambda, \partial) )$, such that $a(\lambda), b(\lambda), \text{ and }\alpha$ satisfy the conditions in one of the cases described above, and the skew-symmetric polynomial  $Q(\lambda, \partial)$ satisfies (\ref{Jacobifinalcase}).

\subsection{\textbf{Case 1}}\textbf{$(\alpha=0,  b(\lambda)=0)$}
\begin{prop}\label{nilpotent}
	Let $R=\C[\partial]A\oplus \C[\partial]B$ be a rank two Lie conformal algebra satisfying $[A_\lambda A]=0$, $[B_\lambda A]=a(\lambda)A$, and $[B_\lambda B]=Q(\lambda, \partial)A$. If $a(\lambda)=0$, then $Q(\lambda, \partial)$ may be any skew-symmetric polynomial. If $a(\lambda)\neq 0$, then we can find a new basis $\{A, C\}$, such that $[A_\lambda A]=[C_\lambda C]=0$ and $[C_\lambda A]=a(\lambda)A$. 
\end{prop}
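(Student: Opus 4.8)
The plan is to split into the two cases $a(\lambda)=0$ and $a(\lambda)\neq 0$ exactly as the statement suggests. When $a(\lambda)=0$ we have $b(\lambda)=0$ (Case 1) and $\alpha=0$, so the only nonzero bracket is $[B_\lambda B]=Q(\lambda,\partial)A$, and the constraint (\ref{Jacobifinalcase}) collapses dramatically. Setting $\alpha=0$ and $a\equiv b\equiv 0$ in (\ref{Jacobifinalcase}) makes every term vanish identically: the left-hand side is $0$ because $\alpha=0$ kills the first term and $a(-\lambda-\mu-\partial)+b(-\lambda-\mu-\partial)\partial=0$ kills the second, and similarly on the right. Hence (\ref{Jacobifinalcase}) imposes no condition at all, and the only requirement on $Q$ is skew-symmetry (which we already know holds for $Q_{B,B}^B$). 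So the first assertion follows immediately by checking that no Jacobi identity is violated.

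For the case $a(\lambda)\neq 0$, the goal is a change of basis $C=B+h(\partial)A$ (for a suitable $h(\partial)\in\C[\partial]$) that kills the $A$-coefficient in $[B_\lambda B]$ while preserving $[A_\lambda A]=0$ and the bracket $[C_\lambda A]=a(\lambda)A$. First I would check that replacing $B$ by $C=B+h(\partial)A$ leaves $[C_\lambda A]=[B_\lambda A]+[h(\partial)A_\lambda A]=a(\lambda)A$ unchanged, since $[A_\lambda A]=0$ and sesquilinearity (\ref{eq:sesquilinearity}) gives $[h(\partial)A_\lambda A]=h(-\lambda)[A_\lambda A]=0$. Then I would compute
\begin{align*}
[C_\lambda C]&=[B_\lambda B]+[B_\lambda h(\partial)A]+[h(\partial)A_\lambda B]+[h(\partial)A_\lambda h(\partial)A]\\
&=Q(\lambda,\partial)A+h(\lambda+\partial)[B_\lambda A]-h(-\lambda-\partial)[B_{-\lambda-\partial}A],
\end{align*}
where the last term vanishes and the middle terms are simplified using sesquilinearity and skew-symmetry. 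Substituting $[B_\lambda A]=a(\lambda)A$ gives $[C_\lambda C]=\bigl(Q(\lambda,\partial)+h(\lambda+\partial)a(\lambda)-h(-\lambda-\partial)a(-\lambda-\partial)\bigr)A$, so I must solve
\[
Q(\lambda,\partial)=-\,h(\lambda+\partial)a(\lambda)+h(-\lambda-\partial)a(-\lambda-\partial)
\]
for $h$. The main obstacle is precisely the solvability of this functional equation: I need to show that any skew-symmetric $Q$ satisfying (\ref{Jacobifinalcase}) (with $\alpha=0$, $b=0$) lies in the image of the operator $h\mapsto h(-\lambda-\partial)a(-\lambda-\partial)-h(\lambda+\partial)a(\lambda)$.

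To handle the obstacle I would first exploit (\ref{Jacobifinalcase}), which in this case reads
\[
Q(\lambda,-\lambda-\mu)\,a(-\lambda-\mu-\partial)=Q(\mu,\lambda+\partial)a(\lambda)-Q(\lambda,\mu+\partial)a(\mu),
\]
and extract the strong structural constraint it places on $Q$; I expect it to force $Q(\lambda,\partial)$ to factor through $a$ in a way that makes $h$ constructible degree by degree in $\partial$. A clean way to organize the solution is to introduce the substitution $\partial\mapsto -\lambda-\partial$ and the skew-symmetry $Q(\lambda,\partial)=-Q(-\lambda-\partial,\partial)$ to convert the problem into finding $h$ with $h(\lambda+\partial)a(\lambda)-h(-\lambda-\partial)a(-\lambda-\partial)=-Q(\lambda,\partial)$, then solving for the unknown coefficients of $h$ by comparing coefficients of powers of $\partial$ and using that $a(\lambda)\neq 0$. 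The expected payoff is that the Jacobi constraint is exactly what guarantees the linear system for the coefficients of $h$ is consistent, yielding the desired $C$ with $[C_\lambda C]=0$.
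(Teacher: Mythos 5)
Your treatment of the case $a(\lambda)=0$ matches the paper's: with $A$ central and $R'\subseteq\C[\partial]A$, every Jacobi identity holds automatically and only skew-symmetry of $Q$ survives as a constraint. Your setup for the case $a(\lambda)\neq 0$ is also the right one — the paper likewise seeks $C=B+h(\partial)A$ killing the $A$-coefficient of $[B_\lambda B]$. (Minor slip: by sesquilinearity $[h(\partial)A_\lambda B]=h(-\lambda)[A_\lambda B]=-h(-\lambda)a(-\lambda-\partial)A$, so the correction term should involve $h(-\lambda)$, not $h(-\lambda-\partial)$; compare the paper's $-p(-\lambda)a(-\lambda-\partial)+p(\lambda+\partial)a(\lambda)$.)

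The genuine gap is that the heart of the proposition — solvability of the functional equation for $h$ — is left as an expectation ("I expect it to force $Q$ to factor through $a$\dots", "the expected payoff is that the linear system is consistent") rather than proved. This is precisely where all the work lies, and the consistency is not automatic: for $j>\deg a$ the image of $h=\partial^j$ contributes $-a(\lambda)$ as the coefficient of $\partial^j$, so you can only cancel the coefficient $f_j(\lambda)$ of $\partial^j$ in $Q$ if $f_j(\lambda)$ is a scalar multiple of $a(\lambda)$. The paper extracts exactly this from the Jacobi constraint: comparing $\partial$-degrees in (\ref{firstcase}) gives $\deg a\leq\deg_\partial Q$; when $\deg a<n=\deg_\partial Q$ the vanishing of the $\partial^n$-coefficient forces $f_n(\mu)a(\lambda)=f_n(\lambda)a(\mu)$, hence $f_n=ka$, and one descends by induction to $\deg a=m=\deg_\partial Q$; in that terminal case \Cref{keycorollary} (skew-symmetry) bounds $\deg_\lambda f_m\leq m-1$, so a polynomial $p(\partial)=\partial^m+\sum p_i\partial^i$ can be chosen coefficient by coefficient to push $\deg_\partial Q_{C,C}^A$ below $m$, which contradicts $\deg a\leq\deg_\partial Q_{C,C}^A$ unless $Q_{C,C}^A=0$. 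None of these three ingredients (the degree inequality, the proportionality $f_n\sim a$, and the invocation of \Cref{keycorollary}) appears in your proposal, so as written it does not establish the second assertion.
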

\begin{proof}
When $a(\lambda)=0$,  $A$ is a central element in $R$ and $R'\subseteq \C[\partial]A$. So all the Jacobi identities are trivially satisfied and the only constraint for $Q(\lambda, \partial)$ is that it should be skew-symmetric. 
When $a(\lambda)\neq 0$, we show that by a suitable change of basis we can kill $Q(\lambda, \partial)$. If $Q(\lambda, \partial)\neq 0$, we assume that $Q(\lambda, \partial)=\sum_{i\geq 0}^nf_i(\lambda)\partial^i$ and $f_n(\lambda)\neq 0$. Then
(\ref{Jacobifinalcase}) gives us 
	\begin{align}\label{firstcase}
	a(\mu)\sum_{i\geq 0}^n f_i(\lambda)&(\mu+\partial)^i-a(\lambda)\sum_{i\geq 0}^nf_i(\mu)(\lambda+\partial)^i\nonumber\\ &=a(-\lambda-\mu-\partial)\sum_{i\geq 0}^nf_i(\lambda)(-\lambda-\mu)^i .
	\end{align}
Comparing the degrees with respect to $\partial$ on both sides of  (\ref{firstcase}), we have $\deg a(\lambda)\leq \deg_\partial\,Q(\lambda, \partial)$. If $\deg a(\lambda)< n$, then the coefficient of $\partial^n$ on the left side of (\ref{firstcase}) must be zero, which implies that $f_n(\mu)a(\lambda)= f_n(\lambda)a(\mu)$. So $f_n(\lambda)=ka(\lambda)$ for some constant $k\in \C^\times$. For $B'=B-k\partial^nA$, $\{A, B'\}$ forms a new basis and $[B'_\lambda A]=a(\lambda)A$, $[B'_\lambda B']=Q'(\lambda, \partial)A$, where  
$$Q'(\lambda, \partial)=Q(\lambda, \partial) +(-\lambda)^n a(-\lambda-\partial)k-(\lambda+\partial)^na(\lambda)k. $$
It is clear that $\deg_\partial Q'(\lambda, \partial)\leq n$. Moreover, the coefficient of $\partial^n$ in $Q'(\lambda, \partial)$ is $f_n(\lambda)-ka(\lambda)=0$, i.e., $\deg_\partial Q'(\lambda, \partial)<\deg_\partial Q(\lambda, \partial)$.  By induction,  we can assume that $\deg a(\lambda) = m = \deg_\partial Q(\lambda, \partial)$. Since $Q(\lambda, \partial)$ is skew-symmetric, \Cref{keycorollary} implies that the coefficient of $\partial^m$ in  $Q(\lambda, \partial)$ is a polynomial in $\lambda$ of degree less than or equal to $m-1$. Let us assume that $f_m(\lambda)=\sum_{i=0}^{m-1} b_{i} \lambda^i$. If $p(\partial)=\partial^m+\sum_{i=0}^{m-1}p_i\partial^i$, and $C=B+p(\partial) A$, then $[C_\lambda A]=a(\lambda)A$ and 
	\begin{equation*}
	Q_{C, C}^A(\lambda, \partial)=Q(\lambda, \partial) -p(-\lambda) a(-\lambda-\partial)+p(\lambda+\partial)a(\lambda).
	\end{equation*}
	Note that $\deg_\partial Q_{C,C}^A(\lambda, \partial)\leq m$. If we write $a(\lambda)=\sum_{i= 0}^m a_i \lambda^i$ with $a_m\neq 0$, then the coefficient of $\partial^m$ in the polynomial $Q_{C,C}^A(\lambda, \partial)$ is 
	\begin{equation*}
	\sum_{i\geq 0}^{m-1} (b_{i}+(-1)^{m+i+1}a_m p_i+a_i)\lambda^i.
	\end{equation*} 
	Since $a_m\neq 0$, given $a_i, b_{i}$, we can choose $p_i=(-1)^{m+i}\dfrac{a_i+b_{i}}{a_m}$ such that the coefficient of $\partial^n$ in $Q_{C,C}^A(\lambda, \partial)$ becomes zero, which implies that $\deg_\partial Q_{C,C}^A(\lambda, \partial)<m$. But if $Q_{C,C}^A(\lambda, \partial)$ is non-zero, it must also satisfy (\ref{firstcase}), i.e., $\deg\, a(\lambda)\leq \deg\,Q_{C, C}^A(\lambda, \partial)$. Hence $Q_{C, C}^A(\lambda, \partial)=0$ and in the new basis $\{A, C\}$, we have $[A_\lambda A]=[C_\lambda C]=0, [C_\lambda A]=a(\lambda)A$. 
\end{proof}
\begin{remark}
	 It is clear that  $R$ is solvable but not nilpotent when $[A_\lambda A]=[B_\lambda B]=0$ and $[B_\lambda A]=a(\lambda)A$ for some non-zero  polynomial $a(\lambda)$.  So we denote it by $R_{sol}(a(\lambda))$.  When  $[A_\lambda A]=[B_\lambda A]=0$ and $[B_\lambda B]=Q(\lambda, \partial)A$ for some skew-symmetric polynomial $Q(\lambda, \partial)$,  $R$ is nilpotent.  So we denote it by $R_{nil}(Q(\lambda, \partial))$. We will call the $\lambda$-brackets in  \Cref{nilpotent} the normalized $\lambda$-brackets.
\end{remark}
\begin{remark}
As in Case 2 and Case 3, we have $B\in R^{(n)}$ for any $n$, so \Cref{nilpotent} classifies all solvable rank two Lie conformal algebras.  
\end{remark}
\begin{definition}
Two polynomials $f$ and $g$ are said to be associated if $f=k g$ for some $k\in \C^\times$, which  we denote by $f\sim g$. 
\end{definition}
\begin{lemma}\label{isonilpotent}
The solvable Lie conformal algebras $R_{sol}(a(\lambda))$ and $R_{sol}(a'(\lambda))$ are isomorphic if and only if $a(\lambda)\sim a'(\lambda)$.  The nilpotent  Lie conformal algebras $R_{nil}(Q(\lambda, \partial))$ and $R_{nil}(Q'(\lambda, \partial))$ are isomorphic if and only if $Q(\lambda, \partial)\sim Q'(\lambda, \partial)$. 
\end{lemma}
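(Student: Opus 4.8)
The plan is to prove both biconditionals by the same two-move strategy: first pin down how any isomorphism must act on the distinguished submodule $\C[\partial]A$, and then read off the relation between the structure data from a single bracket. The forward (``only if'') implication is the substantive one; the reverse is an explicit construction.

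First I would identify $\C[\partial]A$ intrinsically in each family, so that any isomorphism is forced to respect it. In the solvable case $R_{sol}(a)$ (with $a\neq 0$), a direct computation with (\ref{eq:sesquilinearity}) shows that the derived algebra is exactly $R'=\C[\partial]A$, since $[B_\lambda A]=a(\lambda)A$ already produces $A$ up to a nonzero scalar while every bracket lands in $\C[\partial]A$. In the nilpotent case $R_{nil}(Q)$ (with $Q\neq 0$), the derived algebra may be a proper submodule of $\C[\partial]A$, so instead I would compute the center: for $x=f(\partial)A+g(\partial)B$ one finds $[x_\lambda A]=0$ and $[x_\lambda B]=g(-\lambda)Q(\lambda,\partial)A$, whence $x$ is central iff $g=0$, i.e. the center is $\C[\partial]A$. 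Either way $\C[\partial]A$ is a characteristic ideal, so an isomorphism $\varphi$ satisfies $\varphi(\C[\partial]A)=\C[\partial]A'$; as a map of free rank-one modules this gives $\varphi(A)=uA'$ with $u\in\C^\times$. Writing $\varphi(B)=q(\partial)A'+r(\partial)B'$ and using that the matrix of $\varphi$ is invertible over $\C[\partial]$ (hence has determinant $ur\in\C[\partial]^\times=\C^\times$) forces $r=r_0\in\C^\times$.

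With $\varphi$ reduced to this triangular normal form, the ``only if'' direction is a one-line calculation. Applying $\varphi$ to the defining relation and expanding the right-hand side with (\ref{eq:sesquilinearity}): in the solvable case $\varphi([B_\lambda A])=[(q(\partial)A'+r_0B')_\lambda\, uA']=ur_0\,a'(\lambda)A'$, the $A'$--$A'$ term vanishing, while $\varphi(a(\lambda)A)=u\,a(\lambda)A'$, so $a(\lambda)=r_0a'(\lambda)$ and $a\sim a'$. In the nilpotent case all three cross terms vanish because $A'$ is central, leaving $[\varphi(B)_\lambda\varphi(B)]=r_0^2Q'(\lambda,\partial)A'$, which against $\varphi(Q(\lambda,\partial)A)=uQ(\lambda,\partial)A'$ yields $uQ=r_0^2Q'$ and hence $Q\sim Q'$.

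For the reverse direction I would simply exhibit diagonal isomorphisms. If $a=ka'$ with $k\in\C^\times$, the map $A\mapsto A'$, $B\mapsto kB'$ is a $\C[\partial]$-module isomorphism, and one checks on the generating pairs (sesquilinearity then extends bracket preservation to all of $R$) that it respects every $\lambda$-bracket. Likewise, if $Q'=kQ$ then $A\mapsto kA'$, $B\mapsto B'$ works. The degenerate subcase $Q=0$ (the commutative rank two algebra) is handled separately: $R_{nil}(0)$ is abelian, so any algebra isomorphic to it is abelian, forcing $Q=0$ as well, consistent with $0\sim 0$. The only real obstacle is this first step---verifying that $\C[\partial]A$ is genuinely characteristic---and the mild subtlety there is that the correct invariant differs between the two families (the derived subalgebra for the solvable algebras, whose center is in fact trivial, versus the center for the nilpotent ones); once $\varphi(A)=uA'$ is secured, everything else is forced.
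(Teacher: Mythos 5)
Your argument is correct and follows essentially the same route as the paper: force $\varphi(A)$ to be a nonzero scalar multiple of $A'$ via a characteristic submodule, deduce the triangular form of $\varphi$, and read off $a=r_0a'$ (resp.\ $uQ=r_0^2Q'$) from a single bracket. The only (immaterial) difference is that the paper uses the derived algebra in both cases --- needing only $\{0\}\subsetneq R'\subseteq\C[\partial]A$, which already forces $\varphi(A)\in\C[\partial]A'$ --- whereas you switch to the center for the nilpotent family; both invariants work.
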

\begin{proof}
Let $R_1$ denote $R_{sol}(a(\lambda))$ or $R_{nil}(Q(\lambda, \partial))$ and $R_2$  denote $R_{sol}(a'(\lambda))$ or $R_{nil}(Q'(\lambda, \partial))$, respectively, with basis elements $\{A, B\}$ and $\{A', B'\}$ satisfying the normalized $\lambda$-brackets. It is clear that if $a(\lambda)=ka'(\lambda)$, then the map $A\mapsto A', B\mapsto kB'$ gives an isomorphism between $R_{sol}(a(\lambda))$ and $R_{sol}(a'(\lambda))$, and if $Q(\lambda, \partial)=kQ'(\lambda, \partial)$, then the map $A\mapsto A', B\mapsto \sqrt{k}B'$ gives an isomorphism between $R_{nil}(Q(\lambda, \partial))$ and $R_{nil}(Q'(\lambda, \partial))$. \\
For the converse, assuming that $\varphi$ is an isomorphism between $R_1$ and $R_2$, we have $\varphi (R_1')\cong R_2'$. In both cases, we have $$\{0\} \subsetneq R_1'\subseteq \C[\partial]A, \qquad \{0\} \subsetneq R_2'\subseteq \C[\partial]A'.$$ 
So $\varphi(A)=f(\partial)A'$ for some polynomial $f(\partial)$.  But $\{\varphi(A), \varphi(B)\}$ forms a basis of $R_2$, so we have $f(\partial)=s$ and $\varphi(B)=tB+g(\partial)A$ for some constants $s,t\in \C^\times$ and some polynomial $g(\partial)$. When $R_1=R_{sol}(a(\lambda))$ and $R_2=R_{sol}(a'(\lambda))$, as $\varphi([B_\lambda A])=[\varphi(B)_\lambda \varphi(A)]$, we get $a(\lambda)=ta'(\lambda)$. When $R_1=R_{nil}(Q(\lambda, \partial))$ and $R_2=R_{nil}(Q'(\lambda, \partial))$, as $\varphi([B_\lambda B])=[\varphi(B)_\lambda \varphi(B)]$, we get $Q(\lambda, \partial)=\dfrac{t^2}{s}Q'(\lambda, \partial)$.
\end{proof}
\subsection{\textbf{Case 2}}\textbf{$(\alpha=1, a(\lambda)=b(\lambda)=0)$}
\begin{prop}\label{semidirect}
Let $R=\C[\partial]A\oplus \C[\partial]B$ be a rank two Lie conformal algebra satisfying $[A_\lambda A]=[B_\lambda A]=0$, and $[B_\lambda B]=(2\lambda+\partial)B+Q(\lambda, \partial)A$. Then by a suitable change of basis we can set $Q(\lambda, \partial)=0$.
\end{prop}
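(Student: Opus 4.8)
The plan is to exploit the only basis freedom that preserves the normal form of \textbf{Case 2}, namely replacing $B$ by $B' = B + g(\partial)A$ for a polynomial $g(\partial)\in\C[\partial]$, and to show that $Q(\lambda,\partial)$ is always a ``coboundary'' that such a substitution annihilates. First I would specialize the master equation (\ref{Jacobifinalcase}) to the present situation $\alpha=1$, $a(\lambda)=b(\lambda)=0$, where it collapses to
\begin{equation*}
(\lambda-\mu)Q(\lambda+\mu,\partial) = (2\mu+\lambda+\partial)Q(\lambda,\partial) - (2\lambda+\mu+\partial)Q(\mu,\partial). \tag{$\ast$}
\end{equation*}
Since $[B_\lambda A]=[A_\lambda A]=0$, skew-symmetry gives $[A_\lambda B]=0$ as well, so $A$ is central and $[B'_\lambda B']=[B_\lambda B]$ by (\ref{eq:sesquilinearity}). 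However, re-expressing this value in the new basis (substituting $B = B' - g(\partial)A$) shifts the coefficient of $A$: one finds $[B'_\lambda B'] = (2\lambda+\partial)B' + \bigl(Q(\lambda,\partial) - (2\lambda+\partial)g(\partial)\bigr)A$. Thus the substitution acts by $Q \mapsto Q - (2\lambda+\partial)g(\partial)$, and it suffices to prove that every skew-symmetric solution $Q$ of $(\ast)$ has the form $(2\lambda+\partial)g(\partial)$ for some $g(\partial)\in\C[\partial]$.

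I would establish this by induction on $n := \deg_\partial Q$. Writing $Q = \sum_{i=0}^{n} q_i(\lambda)\partial^i$ and comparing the coefficients of $\partial^{n+1}$ in $(\ast)$—a power that appears only on the right-hand side, from the two occurrences of $\partial Q$—forces $q_n(\lambda)-q_n(\mu)=0$, so the leading coefficient $q_n$ is a \emph{constant} $c_n$ (this is sharper than what \Cref{keycorollary} alone provides, and is exactly what makes the reduction terminate). A short direct expansion shows that every polynomial $(2\lambda+\partial)h(\partial)$ already satisfies $(\ast)$; conceptually this is clear, since such a $Q$ arises from the genuine split algebra $\Vir\oplus\C[\partial]A$ after the basis change above. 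As $(\ast)$ is linear in $Q$, subtracting the coboundary $c_n(2\lambda+\partial)\partial^{\,n-1}$—whose $\partial^n$-coefficient is precisely $c_n$—yields a new skew-symmetric $\tilde Q$ still satisfying $(\ast)$ but with $\deg_\partial \tilde Q < n$.

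The induction then reduces to the base case $\deg_\partial Q = 0$, where $Q=q_0(\lambda)$: the $\partial^{1}$-comparison in $(\ast)$ makes $q_0$ constant, and skew-symmetry ($q_0=-q_0$) forces $q_0=0$. Combining the two parts, any admissible $Q$ equals $(2\lambda+\partial)g(\partial)$ for some $g$, and choosing exactly this $g$ in $B'=B+g(\partial)A$ produces $Q'=0$, as claimed. I expect the only delicate point to be the bookkeeping in the inductive step—verifying that subtracting the explicit coboundary strictly lowers the $\partial$-degree while preserving both skew-symmetry and $(\ast)$; everything else is routine, precisely because the centrality of $A$ guarantees that no other basis change interacts with $Q$.
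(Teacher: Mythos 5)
Your proof is correct, and it shares the paper's overall architecture: both arguments observe that $A$ is central, that the substitution $B\mapsto B+g(\partial)A$ shifts $Q$ by $-(2\lambda+\partial)g(\partial)$, and that the whole problem therefore reduces to showing every admissible $Q$ equals $(2\lambda+\partial)g(\partial)$. Where you diverge is in how you solve the functional equation $(\ast)$ (which is exactly the paper's (\ref{JacobiBBB}) before factoring): the paper first pulls out the factor $(2\lambda+\partial)$ via \Cref{Skew-symmetry}, writes $Q=(2\lambda+\partial)\sum_i f_i(\lambda)\partial^i$, and then makes the single specialization $\mu=0$, which instantly forces $f_i(\lambda)=f_i(0)$ for all $i$ and finishes the job with one change of basis. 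You instead run an induction on $\deg_\partial Q$, extracting the $\partial^{n+1}$-coefficient of $(\ast)$ to see that the top coefficient $q_n(\lambda)$ is constant and then peeling off the coboundary $c_n(2\lambda+\partial)\partial^{n-1}$. Both are sound; the paper's $\mu=0$ trick is a one-shot argument that avoids the degree bookkeeping, while your leading-coefficient induction is more mechanical and makes explicit the observation (sharper than \Cref{keycorollary}) that the Jacobi identity, not just skew-symmetry, pins the top coefficient down to a constant. Your remark that $(2\lambda+\partial)h(\partial)$ automatically satisfies $(\ast)$ because it is obtained from the split algebra $\Vir\oplus\C[\partial]A$ by a change of basis is a nice conceptual shortcut the paper does not spell out.
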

\begin{proof}
If $Q(\lambda, \partial)\neq 0$, then by skew-symmetry, we can assume that $Q(\lambda,\partial)=(2\lambda+\partial)\sum_{i\geq 0}^nf_i(\lambda)\partial^i$ with $f_n(\lambda)\neq 0$. From (\ref{Jacobifinalcase}), we get
\begin{align}\label{JacobiBBB}
    (2\mu+\lambda+\partial)&(2\lambda+\partial)\sum_{i\geq 0}^nf_i(\lambda)\partial^i-(2\lambda+\mu+\partial)(2\mu+\partial)\sum_{i\geq 0}^nf_i(\mu)\partial^i\nonumber\\
    &=(\lambda-\mu)(2\lambda+2\mu+\partial)\sum_{i\geq 0}^nf_i(\lambda+\mu)\partial^i.
\end{align}
Setting $\mu=0$ in (\ref{JacobiBBB}), we have
\begin{equation*}
\sum_{i\geq 0}^nf_i(\lambda)\partial^i=\sum_{i\geq 0}^nf_i(0)\partial^i,
\end{equation*}
i.e.,  $f_i(\lambda)=f_i(0)$ for all $i$. If $B'=B+\sum_{i\geq 0}^nf_i(0)\partial^iA$, then $[B'_\lambda B']=(2\lambda+\partial)B'$ and $[A_\lambda A]=[B'_\lambda A]=0$.
\end{proof}
\begin{lemma}\label{centralelement}
	Let $R=\C[\partial]A\oplus \C[\partial]B$ be a rank two Lie conformal algebra satisfying $[A_\lambda A]=0, [B_\lambda A]=(c\lambda+d+\partial)A$, and $[B_\lambda B]=(2\lambda+\partial)B+Q(\lambda, \partial)A$. Then $R$ has no non-trivial central element.  
\end{lemma}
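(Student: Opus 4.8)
The plan is to take an arbitrary element $z = P(\partial)A + S(\partial)B$ with $P, S \in \C[\partial]$ and to show that centrality forces $P = S = 0$. First I would reduce the condition ``$[z_\lambda Y]=0$ for all $Y\in R$'' to the two equations $[z_\lambda A]=0$ and $[z_\lambda B]=0$: by the right-hand sesquilinearity $[z_\lambda g(\partial)Y]=g(\lambda+\partial)[z_\lambda Y]$, so it suffices to test $z$ against the two generators $A$ and $B$.

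Next I would expand $[z_\lambda A]$ using (\ref{eq:sesquilinearity}) together with $[A_\lambda A]=0$ and $[B_\lambda A]=(c\lambda+d+\partial)A$, obtaining $[z_\lambda A]=S(-\lambda)(c\lambda+d+\partial)A$. Since the $A$-component of $\C[\lambda]\otimes R$ is free, this vanishes if and only if the scalar polynomial $S(-\lambda)(c\lambda+d+\partial)$ is zero in $\C[\lambda,\partial]$; as $c\lambda+d+\partial$ is nonzero (it contains the monomial $\partial$), I conclude $S=0$. With $S=0$ in hand, $z=P(\partial)A$ and $[z_\lambda B]=P(-\lambda)[A_\lambda B]$. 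I would compute $[A_\lambda B]$ from the skew-symmetry relation (\ref{eq:skewsymmetryranktwo}): since $Q_{B,A}^B=0$ we get $Q_{A,B}^B=0$, while $Q_{A,B}^A(\lambda,\partial)=-Q_{B,A}^A(-\lambda-\partial,\partial)=c\lambda+(c-1)\partial-d$. Thus $[z_\lambda B]=P(-\lambda)\left(c\lambda+(c-1)\partial-d\right)A$, and again this forces $P(-\lambda)(c\lambda+(c-1)\partial-d)=0$ in $\C[\lambda,\partial]$, whence $P=0$ and $z=0$.

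The computation is short, and the only point requiring care — where I expect the mild ``obstacle'' to lie — is verifying that the two coefficient polynomials that get cancelled are never identically zero, uniformly in the parameters $c,d$. For $c\lambda+d+\partial$ this is immediate. For $c\lambda+(c-1)\partial-d$, vanishing would require simultaneously $c=0$ (coefficient of $\lambda$) and $c-1=0$ (coefficient of $\partial$), which is impossible; hence the cancellation is legitimate for every choice of $c$ and $d$ and no case analysis is needed. The other thing to get right is the substitution $\lambda\mapsto-\lambda-\partial$ in the skew-symmetry formula when producing $[A_\lambda B]$ from $[B_\lambda A]$.
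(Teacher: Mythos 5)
Your argument is correct and is essentially identical to the paper's own proof: both expand a putative central element $z=P(\partial)A+S(\partial)B$ against the generators, use $[z_\lambda A]=S(-\lambda)(c\lambda+d+\partial)A$ to kill $S$, and then $[z_\lambda B]=P(-\lambda)[A_\lambda B]$ with $[A_\lambda B]=(c\lambda+(c-1)\partial-d)A\neq 0$ to kill $P$. Your extra check that the coefficient polynomials cannot vanish for any $c,d$ is the only detail the paper leaves implicit.
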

\begin{proof}
Let $X=f(\partial)A+g(\partial)B$ be central in $R$. As $[X_\lambda A]=g(-\lambda)[B_\lambda A]=0$ and $[X_\lambda B]=f(-\lambda)[A_\lambda B]=0$, we have that $f(\partial)=g(\partial)=0$. Hence $X=0$.  
\end{proof}
\begin{remark}\label{cvandrcdqnonisomorphic}
	It is clear that there exists a non-trivial central element in the Lie conformal algebras considered in Case 2. Hence the Lie conformal algebras considered in Case 2 and those in Case 3 are not isomorphic.	
\end{remark}

\subsection{\textbf{Case 3a}}\textbf{($\alpha=1$, $b(\lambda)=1$, $a(\lambda)=c\lambda+d$ for  $c\in \C, d\in \C^\times$)}

 Assuming that $Q(\lambda,\partial)=\sum_{i=0}^m f_i(\lambda)\partial^i$, from (\ref{Jacobifinalcase}) in Case 3 we have
\begin{align}\label{jacobicequal0}
(c\lambda+c\mu+&(c-1)\partial-d)\sum_{i=0}^{m} f_i(\lambda)(-\lambda-\mu)^i+(\lambda-\mu)\sum_{i=0}^{m} f_i(\lambda+\mu)\partial^i\nonumber
\\&= (c\lambda+d+\partial)\sum_{i=0}^{m} f_i(\mu)(\lambda+\partial)^i+(2\mu+\lambda+\partial)\sum_{i=0}^{m} f_i(\lambda)\partial^i\nonumber\\ 
&\quad -(c\mu+d+\partial)\sum_{i=0}^{m} f_i(\lambda)(\mu+\partial)^i -(2\lambda+\mu+\partial)\sum_{i=0}^{m} f_i(\mu)\partial^i.
\end{align}
By skew-symmetry, it is clear that if $\deg_\partial Q(\lambda,\partial)\leq 1$, then $Q(\lambda,\partial)=(2\lambda+\partial)k$ for some constant $k\in \C$. It is easy to see that such a polynomial always satisfies (\ref{jacobicequal0}). 
Hence we may assume that  $\deg_\partial Q(\lambda,\partial)=m\geq 2$.  Comparing the coefficients of $\partial^m$ in both sides of (\ref{jacobicequal0}), we get
\begin{equation}\label{leadingcoefficientequals0}
(\lambda-\mu)f_m(\lambda+\mu)=((c+m-2)\lambda-\mu+d)f_m(\mu)-((c+m-2)\mu-\lambda+d)f_m(\lambda).
\end{equation} 
When $\deg_\partial Q(\lambda,\partial)=m\geq 3$, comparing the coefficients of $\partial^{m-1}$ in both sides of (\ref{jacobicequal0}), we get
\begin{align}\label{secondleadingcoefficientequals0}
(\lambda-\mu)f_{m-1}(\lambda+\mu)=&\left(cm+\binom{m}{2}\right)\lambda^2  f_m(\mu)+((c+m-3)\lambda-\mu))f_{m-1}(\mu) \nonumber \\
&-\left(cm+\binom{m}{2}\right)\mu^2f_m(\lambda)-((c+m-3)\mu-\lambda))f_{m-1}(\lambda)\nonumber \\
&+d(f_m(\mu)m\lambda-f_m(\lambda)m\mu+f_{m-1}(\mu)-f_{m-1}(\lambda)).
\end{align}
\begin{prop}\label{dnotequal0}
Let $R=\C[\partial]A\oplus \C[\partial]B$ be a rank two Lie conformal algebra satisfying $[A_\lambda A]=0, [B_\lambda A]=(c\lambda+d+\partial )A$ and $[B_\lambda B]=Q(\lambda, \partial)A+(2\lambda+\partial)B$. If $d\neq 0$, then by a suitable change of basis we can set $Q(\lambda, \partial)=0$. 
\end{prop}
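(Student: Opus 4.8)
The plan is to strip off $Q(\lambda,\partial)$ by a sequence of changes of basis of the form $B\mapsto B'=B+p(\partial)A$ with $p(\partial)\in\C[\partial]$, each of which preserves $[A_\lambda A]=0$ and $[B_\lambda A]=(c\lambda+d+\partial)A$ (indeed $[B'_\lambda A]=[B_\lambda A]+p(-\lambda)[A_\lambda A]=[B_\lambda A]$) while strictly lowering the $\partial$-degree of the polynomial attached to $A$. The first thing I would record is the transformation law. Writing $[B'_\lambda B']=(2\lambda+\partial)B'+Q'(\lambda,\partial)A$ and using (\ref{eq:sesquilinearity}), $[A_\lambda A]=0$, and the skew-symmetry consequence $[A_\lambda B]=(c\lambda+(c-1)\partial-d)A$, a direct computation (substituting $B=B'-p(\partial)A$ back into $(2\lambda+\partial)B$) yields
\begin{equation*}
Q'(\lambda,\partial)=Q(\lambda,\partial)+p(\lambda+\partial)(c\lambda+d+\partial)+p(-\lambda)(c\lambda+(c-1)\partial-d)-(2\lambda+\partial)p(\partial).
\end{equation*}
Since $\{A,B'\}$ is again a basis in normalized form, $Q'$ is automatically skew-symmetric and again satisfies (\ref{jacobicequal0}); this is exactly what allows the process to be iterated.

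The crux is the degree reduction for $m:=\deg_\partial Q\geq 2$. Here I would exploit the leading-coefficient identity (\ref{leadingcoefficientequals0}). Setting $\mu=0$ and dividing by $d$ — this is the one place where the hypothesis $d\neq0$ is essential — collapses that identity to $d\,f_m(\lambda)=\big((c+m-2)\lambda+d\big)f_m(0)$, so that $f_m(\lambda)=\tfrac{f_m(0)}{d}\big((c+m-2)\lambda+d\big)$ is affine and proportional to $(c+m-2)\lambda+d$. I would then take $p(\partial)=k\partial^m$ with $k=-f_m(0)/d$. A short computation from the transformation law shows the $\partial^{m+1}$-terms cancel and the coefficient of $\partial^m$ in $Q'$ becomes $f_m(\lambda)+k\big((c+m-2)\lambda+d\big)=0$, whence $\deg_\partial Q'<m$. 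Iterating this reduction drives $\deg_\partial Q$ down to at most $1$, at which point skew-symmetry forces $Q(\lambda,\partial)=(2\lambda+\partial)k'$ for a constant $k'$ (as already noted just before (\ref{leadingcoefficientequals0})).

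For the final clean-up I would apply the transformation law with a \emph{linear} shift $p(\partial)=s\partial$. Expanding the three new terms, one finds that for every value of $c$ they combine to $sd(2\lambda+\partial)$, so that $Q'(\lambda,\partial)=(2\lambda+\partial)(k'+sd)$; choosing $s=-k'/d$ (again using $d\neq0$) gives $Q'=0$ and completes the proof. It is worth flagging why the linear shift is the right choice: a constant shift $p(\partial)=s$ contributes only $s(c-1)(2\lambda+\partial)$, which vanishes identically when $c=1$ and hence cannot remove the residual term in that case, whereas $p=s\partial$ makes $d\neq0$ do the work uniformly in $c$.

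The main obstacle is the degree-reduction step of the second paragraph: the whole argument hinges on the Jacobi identity pinning the leading coefficient $f_m$ down to an affine multiple of $(c+m-2)\lambda+d$, which is precisely the information extracted from (\ref{leadingcoefficientequals0}) once $d\neq0$ permits the division by $d$. Everything else — the transformation law and the two explicit polynomial cancellations — is routine algebra that I would carry out but not belabor.
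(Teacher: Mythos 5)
Your proposal is correct and follows essentially the same route as the paper: extract the leading-coefficient identity (\ref{leadingcoefficientequals0}) from the Jacobi constraint, specialize one variable to $0$ and use $d\neq 0$ to force $f_m(\lambda)$ to be a multiple of $(c+m-2)\lambda+d$, cancel it with a shift $B\mapsto B+k\partial^m A$, iterate down to $\deg_\partial Q\leq 1$, and finish with a linear shift (the paper's $C=B-\tfrac{k}{d}\partial A$). The only differences are cosmetic (you set $\mu=0$ where the paper sets $\lambda=0$, and your signs on $p$ and $k$ are flipped relative to the paper's $B'=B-k\partial^m A$).
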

\begin{proof}
Firstly, if $\deg_\partial Q(\lambda, \partial)\geq 2$, then we show that by a suitable change of basis we may decrease its degree with respect to $\partial$. Let $Q(\lambda,\partial)=\sum_{i\geq 0}^mf_i(\lambda)\partial^i$ with $f_m(\lambda)\neq 0$ and $m\geq 2$. 
Setting $\lambda=0$ in (\ref{leadingcoefficientequals0}), we get 
\begin{equation*}\label{keyequationford=0}
f_m(\mu)d-((c+m-2)\mu+d)f_m(0)=0,
\end{equation*}
i.e., $f_m(\mu)=\dfrac{f_m(0)}{d}((c+m-2)\mu+d)$ is of degree less or equal to 1.

For $B'=B-k\partial^{m}A$, we have $[B'_\lambda A]=[B_\lambda A]$  and
\begin{align*}
[B'_\lambda B']
&=(2\lambda+\partial)B'+Q_{B',B'}^A(\lambda,\partial)A
\end{align*}
where
\begin{align*}
 Q_{B',B'}^A(\lambda,\partial)=&(2\lambda+\partial)k\partial^m+Q(\lambda,\partial)\nonumber\\
 &-(k(-\lambda)^m(c\lambda+c\partial-d-\partial)-k(\lambda+\partial)^m(c\lambda+d+\partial)).
\end{align*}
It is straightforward to see that $\deg_\partial Q_{B',B'}^A(\lambda,\partial)\leq m$. Rewriting $Q_{B',B'}^A(\lambda,\partial)=\sum_{i\geq 0}^m g_i(\lambda)\partial^i$, we get 
 \begin{equation*}
 g_m(\lambda)=f_m(\lambda)-k[(c+m-2)\lambda +d] .
 \end{equation*} 
Choosing $k=\dfrac{f_m(0)}{d}$, we conclude that $g_m(\lambda)=0$, i.e., $\deg_\partial Q_{B', B'}^A(\lambda, \partial)< \deg_\partial Q(\lambda, \partial)$. This procedure can be continued until we have $\deg_\partial Q(\lambda, \partial)=1$. By induction, we may assume that $ Q(\lambda, \partial)=(2\lambda+\partial)k$ for some constant $k$. Setting $C=B-\dfrac{k}{d}\partial A$, we get $[C_\lambda A]=(c\lambda+d+\partial)A$ and $[C_\lambda C]=(2\lambda+\partial)C$.
\end{proof}
\subsection{\textbf{Case 3b. }}\textbf{($\alpha=1$, $b(\lambda)=1$, $a(\lambda)=c\lambda$ for $c\in \C$)}

 In this subsection, we assume that $R=\C[\partial]A\oplus \C[\partial]B$ satisfying $[A_\lambda A]=0$, $[B_\lambda A]=(c\lambda +\partial)A$ and $[B_\lambda B]=Q(\lambda, \partial)A+(2\lambda+\partial)B$. We write $Q(\lambda, \partial)=\sum_{n \geq 0} Q_n(\lambda, \partial)$ where $Q_n(\lambda, \partial)$ is the homogeneous component of $Q(\lambda, \partial)$ of degree $n$ as a polynomial in $\lambda $ and $\partial$. Note that  (\ref{Jacobifinalcase}) is homogeneous in Case 3b in the sense that $Q(\lambda, \partial)$ satisfies (\ref{Jacobifinalcase}) if and only if $Q_n(\lambda, \partial)$ does so for all $n$. In the same sense,  skew-symmetry is also a homogeneous property. Thus $Q(\lambda, \partial)$ satisfies (\ref{Jacobifinalcase}) and the skew-symmetry if and only if $Q_n(\lambda, \partial)$ does so for all $n$.

We show that after suitable changes of basis, we can kill almost all $Q_n(\lambda, \partial)$ except a few special values of $n$ depending on the parameter $c$. We also determine  $Q_n(\lambda, \partial)$ explicitly for those special values which we discuss in two cases, $n\leq 4$ and $n\geq 5$. Note that $Q_0(\lambda, \partial)\equiv 0$ by skew-symmetry. When $n\leq 4,$ by skew-symmetry, the polynomials $Q_n(\lambda, \partial)$ must be of the following forms (see \Cref{Skew-symmetry}),
\begin{align}
Q_1(\lambda, \partial)&=\alpha_1(2\lambda+\partial) ,\label{eq:degree1} \\ 
Q_2(\lambda, \partial)& =\alpha_2(2\lambda+\partial)\partial, \label{eq:degree2}\\
Q_3(\lambda, \partial)&=(2\lambda+\partial)(\alpha_3 \partial^2 +\beta_3 (\lambda^2+\lambda\partial)),\label{eq:degree3}\\ Q_4(\lambda,\partial)&=(2\lambda+\partial)(\alpha_4 \partial^3 +\beta_4 (\lambda^2+\lambda\partial)\partial),\label{eq:degree4}
\end{align}
where $\alpha_1, \alpha_2, \alpha_3, \beta_3, \alpha_4, \beta_4 \in \C$ are constants.

For $B'=B+k \partial^{m-1} A$, where  $k\in \C$ and $m\geq 1$, $\{A, B'\}$ forms a new basis of $R$ satisfying $[A_\lambda A]=0, [B'_\lambda A]=(c\lambda+\partial)A,$ and $[B'_\lambda B']=(2\lambda+\partial)B'+Q'(\lambda, \partial)A$ such that 
\begin{align}\label{changeofbasisformula}
Q'(\lambda, \partial)&-Q(\lambda, \partial)\nonumber\\
 &=k[(-\lambda)^{m-1}(c(\lambda+\partial)-\partial)+(\lambda+\partial)^{m-1}(c\lambda+\partial)-(2\lambda+\partial)\partial^{m-1}]\nonumber\\
&=k(c+m-3)\lambda\partial^{m-1}+k(m-1)(c+(m-2)/2)\lambda^2\partial^{m-2}+\cdots,
\end{align}
where ``$\cdots$'' stands for some homogeneous polynomial in $\lambda, \partial$ of degree $m$ involving only powers of $\partial$ which are strictly less than $m-2$.
Note that $Q'(\lambda, \partial)$ differs from $Q(\lambda, \partial)$  only in the  homogeneous component of degree $m$. 
\begin{lemma}\label{degreelessorequal4}
	Let $R=\C[\partial]A \oplus \C[\partial]B$ be a rank two Lie conformal algebra satisfying $[A_\lambda A]=0$, $[B_\lambda A]=(c\lambda +\partial)A$ and $[B_\lambda B]=Q(\lambda, \partial)A+(2\lambda+\partial)B$. If we write $Q(\lambda, \partial)=\sum_{n\geq 1} Q_n(\lambda, \partial)$ where $Q_n(\lambda, \partial)$ is the homogeneous component of degree $n$ of $Q(\lambda,\partial)$, then
	\begin{itemize}
		\item[(1)] $Q_1(\lambda, \partial)=\alpha_1(2\lambda+\partial)$ for some $\alpha_1\in \C$. Moreover, if $c \neq 1$, then by a suitable change of basis we can set $\alpha_1= 0$. 
		\item[(2)] $Q_2(\lambda, \partial)=\delta_{c, 0}\alpha_2(2\lambda+\partial)\partial$ for some $\alpha_2\in \C$.
		\item[(3)] $Q_3(\lambda, \partial)=(2\lambda+\partial)(\delta_{c, -1}\alpha_3 \partial^2 +\beta_3 (\lambda^2+\lambda\partial))$ for some $\alpha_3, \beta_3\in \C$. Moreover, if $c\neq 0$, by a suitable change of basis we can set $\beta_3=0$. 
		\item[(4)] $Q_4(\lambda,\partial)=\beta_4(2\lambda+\partial) (\lambda^2+\lambda\partial)\partial$. 
		Moreover, if $c\neq -1$, by a suitable change of basis we can set $\beta_4=0$. 
		\end{itemize}
\end{lemma}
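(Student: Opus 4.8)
The plan is to use the two reductions highlighted just before the statement. First, both skew-symmetry and the Jacobi relation (\ref{jacobicequal0}) (here with $d=0$) are homogeneous, so I treat each component $Q_n$ in isolation; and since (\ref{jacobicequal0}) is \emph{linear} in $Q$, the admissible $Q_n$ form a linear subspace of the one- or two-dimensional space of skew-symmetric homogeneous polynomials recorded in (\ref{eq:degree1})--(\ref{eq:degree4}). For each $n$ there are then exactly two things to do: determine which pure-$\partial$ coefficient $\alpha_n$ survives the Jacobi obstruction, and use the change of basis $B'=B+k\partial^{m-1}A$ of (\ref{changeofbasisformula}) to remove the leftover coefficient when $c$ avoids a distinguished value.

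For the Jacobi obstructions I feed the constant leading coefficient $f_n=\alpha_n$ (with $m=n$) into the leading-coefficient equation (\ref{leadingcoefficientequals0}); its right-hand side collapses to $\alpha_n(c+n-1)(\lambda-\mu)$, leaving the single relation $\alpha_n(c+n-2)=0$. This gives $\alpha_2c=0$ and $\alpha_3(c+1)=0$, i.e. $\alpha_2$ and $\alpha_3$ can persist only at $c=0$ and $c=-1$, which is the source of the Kronecker deltas $\delta_{c,0}$, $\delta_{c,-1}$. For $n=1$ the polynomial already has $\partial$-degree $\le 1$, so by the observation preceding (\ref{leadingcoefficientequals0}) it automatically solves (\ref{jacobicequal0}) and $\alpha_1$ is unobstructed. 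The case $n=4$ is the only one where the leading coefficient does not finish the job: it merely gives $\alpha_4(c+2)=0$, so I must separately treat $c=-2$. Assuming $\alpha_4\ne0$ (hence $m=4$) at $c=-2$ and plugging $f_4=\alpha_4$, $f_3=(2\alpha_4+\beta_4)\lambda$ into the second-leading-coefficient equation (\ref{secondleadingcoefficientequals0}), the identity reduces to $2\alpha_4=0$, a contradiction; thus $\alpha_4=0$ for every $c$ and no $\alpha_4\partial^3$ term survives.

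Next I evaluate (\ref{changeofbasisformula}) directly at $m=1,2,3,4$, expanding its exact first line rather than the displayed leading-term approximation (which is not literally valid for such small $m$). The four differences $Q'-Q$ come out as $k(c-1)(2\lambda+\partial)$, identically $0$, $kc\,(2\lambda+\partial)(\lambda^2+\lambda\partial)$, and $k(c+1)(2\lambda+\partial)(\lambda^2+\lambda\partial)\partial$, respectively. Hence $\alpha_1$ is removable exactly when $c\ne1$; $Q_2$ is rigid, which is why $\alpha_2$ is controlled purely by the Jacobi obstruction; $\beta_3$ is removable exactly when $c\ne0$; and $\beta_4$ is removable exactly when $c\ne-1$. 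Because each move changes only the degree-$m$ component, the normalizations carried out at different degrees do not interfere.

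The point requiring the most care is to show that the $\beta_n$-directions carry \emph{no} Jacobi obstruction, so that they are genuinely free parameters (and in particular survive at the exceptional $c$). Instead of grinding through the lower-order coefficients, I would note that applying the isomorphism $B\mapsto B+k\partial^{m-1}A$ to the trivial structure $Q=0$ produces the valid structures $kc\,(2\lambda+\partial)(\lambda^2+\lambda\partial)$ (for $m=3$) and $k(c+1)(2\lambda+\partial)(\lambda^2+\lambda\partial)\partial$ (for $m=4$); these therefore solve (\ref{jacobicequal0}) for all $c\ne0$, resp. all $c\ne-1$, and since the difference of the two sides of (\ref{jacobicequal0}) depends affinely on $c$ for a fixed $Q$, vanishing on a cofinite set forces vanishing for every $c$. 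The only hand computations left are the direct verifications that $(2\lambda+\partial)\partial$ solves (\ref{jacobicequal0}) at $c=0$ and that $(2\lambda+\partial)\partial^2$ solves it at $c=-1$; these pure-$\partial$ directions are unreachable by any change of basis, so the density trick does not apply to them, and checking them is the step I expect to be the most tedious.
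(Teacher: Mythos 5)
Your proposal is correct and follows the same overall strategy as the paper's proof: skew-symmetry pins down the normal forms (\ref{eq:degree1})--(\ref{eq:degree4}), the Jacobi identity constrains the pure-$\partial$ coefficients $\alpha_n$, and the substitution $B\mapsto B+k\partial^{m-1}A$ removes the remaining coefficient away from the exceptional value of $c$. The differences lie in how the paper's ``straightforward calculations'' are organized. The paper performs one combined substitution $B'=B+(k_0+k_1\partial+k_2\partial^2+k_3\partial^3)A$ and records the exact correction $(2\lambda+\partial)[k_0(c-1)+k_2c(\lambda^2+\lambda\partial)+k_3(c+1)(\lambda^2+\lambda\partial)\partial]$, which is exactly the union of your four degree-by-degree computations (your $m=2$ case matching the absence of a $k_1$-term). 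For the obstructions, the paper merely asserts $\delta_{c,0}$, $\delta_{c,-1}$ and $\alpha_4\equiv 0$; you derive them from (\ref{leadingcoefficientequals0}) as $\alpha_n(c+n-2)=0$ and, importantly, notice that for $n=4$ this only excludes $\alpha_4$ when $c\neq -2$, so the extra appeal to (\ref{secondleadingcoefficientequals0}) at $c=-2$ (yielding $2\alpha_4=0$) is genuinely needed --- a case the paper's proof does not mention. Your ``density in $c$'' argument --- transporting $Q=0$ by a change of basis to produce solutions proportional to the $\beta$-directions for all but one value of $c$, then using that the defect of (\ref{jacobicequal0}) is affine in $c$ --- is a clean way to see that $\beta_3,\beta_4$ are unobstructed at the exceptional values $c=0,-1$ without expanding the Jacobi identity; the paper leaves that verification implicit. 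Both routes are sound; yours is more self-contained where the paper is compressed.
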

\begin{proof}
The general forms of $Q_n(\lambda, \partial)$ are given in (\ref{eq:degree1})--(\ref{eq:degree4}). 
The coefficients of $\alpha_2, \alpha_3$, i.e., $\delta_{c, 0}, \delta_{c, -1}$,  and $\alpha_4\equiv 0$ are by straightforward calculations.

For $B'=B+(k_0+k_1\partial+k_2\partial^2+k_3\partial^3)A$, $\{A, B'\}$ forms a new basis of $R$ with $[A_\lambda A]=0, [B'_\lambda A]=(c\lambda+\partial)A$ and $[B'_\lambda B']=(2\lambda+\partial)B'+Q'(\lambda, \partial)A$ such that   
		\begin{equation*}
		Q'(\lambda, \partial) = Q(\lambda, \partial) +(2\lambda+\partial)[k_0(c-1)+k_2c(\lambda^2+\lambda\partial)+k_3(c+1)(\lambda^2+\lambda\partial)\partial].
		\end{equation*}
If we write $Q'(\lambda, \partial)=\sum_{n\geq 1}Q_n'(\lambda, \partial)$, where $Q_n'(\lambda, \partial)$ is the homogeneous component of degree $n$ of $Q'(\lambda,\partial)$, then $Q_n'(\lambda, \partial)=Q_n(\lambda, \partial)$ for $n\geq 5$ and $n=2$. 

If $c\neq 1$, we may choose $k_0$ suitably to get $Q'_1(\lambda, \partial)=0$. If $c\neq 0$, we may choose $k_2$ suitably to set $\beta_3=0$ in the general form of $Q'_3(\lambda, \partial)$ given in (\ref{eq:degree3}). If $c\neq -1$, $k_3$ may be suitably chosen to set $\beta_4=0$ in the general form of $Q'_4(\lambda, \partial)$ given in (\ref{eq:degree4}).
\end{proof}

Next we consider the case $n\geq 5$. If we write $Q_n(\lambda, \partial)=\sum_{i=0}^n b_i\lambda^{n-i}\partial^i$. Then   
(\ref{Jacobifinalcase}) for $Q_n(\lambda, \partial)$ gives us 
\begin{align}\label{jacobicequal0homogeneous}
(\lambda-\mu)&\sum_{i=0}^{n} b_i(\lambda+\mu)^{n-i}\partial^{i}+(c\lambda+c\mu+(c-1)\partial)\sum_{i=0}^{n} b_i\lambda^{n-i}(-\lambda-\mu)^i\nonumber
\\&= (c\lambda+\partial)\sum_{i=0}^{n} b_i\mu^{n-i}(\lambda+\partial)^i+(2\mu+\lambda+\partial)\sum_{i=0}^{n} b_i\lambda^{n-i}\partial^i\nonumber\\ 
&\quad -(c\mu+\partial)\sum_{i=0}^{n} b_i\lambda^{n-i}(\mu+\partial)^i -(2\lambda+\mu+\partial)\sum_{i=0}^{n} b_i\mu^{n-i}\partial^i.
\end{align} 
\begin{lemma}\label{constanttermzero}
If $n\geq 4$, then $Q_n(0, \partial)=0$, i.e., $\deg_\partial Q_n(\lambda, \partial)\leq n-1$. Moreover, if $n$ is even, we have $Q_n(\lambda, 0)=0$.
\end{lemma}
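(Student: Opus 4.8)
The plan is to prove the two assertions by different means: the ``moreover'' part is pure skew-symmetry, while the first assertion needs the homogeneous Jacobi equation (\ref{jacobicequal0homogeneous}). Throughout write $Q_n(\lambda,\partial)=\sum_{i=0}^n b_i\lambda^{n-i}\partial^i$, so that $Q_n(\lambda,0)=b_0\lambda^n$ and $Q_n(0,\partial)=b_n\partial^n$; the first claim is thus exactly $b_n=0$ (equivalently $\deg_\partial Q_n\le n-1$, since $b_n\partial^n$ is the only term of $\partial$-degree $n$), and the second is $b_0=0$.

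First I would dispose of the even case. Setting $\partial=0$ in the skew-symmetry relation $Q_n(\lambda,\partial)=-Q_n(-\lambda-\partial,\partial)$ gives $b_0\lambda^n=-b_0(-\lambda)^n=(-1)^{n+1}b_0\lambda^n$, so $b_0\big(1+(-1)^n\big)=0$. For even $n$ this forces $b_0=0$, i.e.\ $Q_n(\lambda,0)=0$, with no restriction on $n$ beyond parity.

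For the first assertion I would extract $b_n$ by setting $\lambda=0$ in (\ref{jacobicequal0homogeneous}). Writing $Q:=Q_n(\mu,\partial)$, every sum carrying a factor $\lambda^{n-i}$ collapses to its $i=n$ term, while the three $Q$-dependent sums produce $-\mu Q$ on the left and $\partial Q-(\mu+\partial)Q=-\mu Q$ on the right, which cancel. What remains is the single relation
\begin{equation*}
b_n\Big[(-1)^n\big(c\mu+(c-1)\partial\big)\mu^n-(2\mu+\partial)\partial^n+(c\mu+\partial)(\mu+\partial)^n\Big]=0 .
\end{equation*}
Denoting the bracketed homogeneous polynomial of degree $n+1$ by $F(\mu,\partial)$, it then suffices to show $F\not\equiv0$ for $n\ge4$, for then $b_n=0$. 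To this end I would read off three of its coefficients: that of $\mu\partial^n$ is $c+n-2$ (the homogeneous analogue of the leading-coefficient relation (\ref{leadingcoefficientequals0})), that of $\mu^{n+1}$ is $c\big(1+(-1)^n\big)$, and that of $\mu^n\partial$ is $(-1)^n(c-1)+cn+1$. If $F\equiv0$ all three vanish; in particular $c=2-n$. For $n$ even the $\mu^{n+1}$-coefficient then reads $2c=0$, forcing $c=0$ and hence $n=2$; for $n$ odd the $\mu^n\partial$-coefficient becomes $c(n-1)+2=(2-n)(n-1)+2=n(3-n)$, forcing $n=3$. Either way $n\le3$, contradicting $n\ge4$, so $F\not\equiv0$ and $b_n=0$.

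The only genuine obstacle is the degenerate value $c=2-n$: there the coefficient $c+n-2$ vanishes, so comparing top $\partial$-coefficients alone (the relation $(c+n-2)b_n=0$) gives no information, and this is exactly the case realizing the nonzero components at $c=-4$ ($n=6$) and $c=-6$ ($n=8$). It is precisely for this reason that one must keep the full reduced identity $b_nF=0$ rather than a single leading coefficient, and then play the $\mu^{n+1}$- and $\mu^n\partial$-coefficients off against $c+n-2=0$ to eliminate the remaining value of $c$.
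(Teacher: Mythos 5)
Your proof is correct and follows essentially the same route as the paper: both set $\lambda=0$ in (\ref{jacobicequal0homogeneous}) to reduce the identity to $b_nF(\mu,\partial)=0$ and then show $F\not\equiv 0$ for $n\ge 4$ by comparing coefficients (the paper uses $\mu^2\partial^{n-1}$ and $\mu^{n-1}\partial^2$, you use $\mu\partial^n$, $\mu^{n+1}$ and $\mu^n\partial$ — an immaterial difference), and both deduce the even case from skew-symmetry alone.
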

\begin{proof}
Let $Q_n(\lambda, \partial)=\sum_{i=0}^nb_{i}\lambda^{n-i} \partial ^{i}$. Setting $\lambda=0$ in (\ref{jacobicequal0homogeneous}), we get 
\begin{align}\label{eq:finalcase}
(c\mu+(c-1)\partial)b_n(-\mu)^n
= (2\mu+\partial) b_n\partial^n -(c\mu+\partial) b_n(\mu+\partial)^n.
\end{align} 
As $n\geq 4$, comparing the coefficients of $\mu^2\partial^{n-1}$ and $\mu^{n-1}\partial^2$ in both sides of (\ref{eq:finalcase}), we get  $$\left(\binom{n}{2}+cn\right)b_n=\left(c\binom{n}{2}+n\right)b_n=0.$$  But  $\binom{n}{2}+cn$ and $c\binom{n}{2}+n$ can not be zero at the same time when $n\geq 4$. Hence $b_n=0$, i.e.,  $Q_n(0, \partial)=0$. 
Since $Q_n(\lambda, \partial)$ is a skew-symmetric polynomial, by \Cref{Skew-symmetry}, we can write  $$Q_n(\lambda, \partial)=(2\lambda+\partial)\sum_{i\geq 0}k_i (\lambda^2+\lambda\partial)^i \partial^{n-1-2i}$$
where $k_i\in \C$. If $n$ is even, then $n-1-2i>0$. Hence $Q_n(\lambda, 0)=0$ if $n$ is even.
\end{proof}
The following lemma will be very important in the sequel. 
\begin{lemma}\label{secondkeylemma}
If $Q_n(\lambda, \partial)\neq 0$ and $n\geq 4$, then $\deg_\partial Q_n(\lambda, \partial)\geq n-3$. Moreover, $\deg_\partial Q_n(\lambda, \partial)= n-2$ or $n-3$ only if $c+n-3=0$.  
\end{lemma}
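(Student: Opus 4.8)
The plan is to read off a single scalar identity from the homogeneous Jacobi equation (\ref{jacobicequal0homogeneous}) by isolating the top power of $\partial$, and then to analyze that identity combinatorially. Write $Q_n(\lambda,\partial)=\sum_{i=0}^n b_i\lambda^{n-i}\partial^i$ and set $d=\deg_\partial Q_n$, so that $b_d\neq 0$ and $b_i=0$ for $i>d$. By \Cref{constanttermzero} we have $b_n=0$, so $d\le n-1$; moreover, writing $Q_n$ in the skew-symmetric normal form $(2\lambda+\partial)\sum_{i}k_i(\lambda^2+\lambda\partial)^i\partial^{\,n-1-2i}$ of \Cref{Skew-symmetry}, the term indexed by $i$ has $\partial$-degree $n-i$, so $\deg_\partial Q_n=n-i_0$ for the least index $i_0$ with $k_{i_0}\neq 0$. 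Since $1\le i_0\le\lfloor (n-1)/2\rfloor$, this gives $2\le d\le n-1$ for $n\ge 4$. The lower bound $d\ge 2$ is exactly what makes the argument below clean.

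First I would extract the coefficient of $\partial^d$ in (\ref{jacobicequal0homogeneous}). A short check shows the coefficient of $\partial^{d+1}$ vanishes identically. On the left-hand side, the term carrying $(c\lambda+c\mu+(c-1)\partial)$ multiplies a $\partial$-free sum, so it only reaches $\partial^1$ and contributes nothing to $\partial^d$ once $d\ge 2$; hence the $\partial^d$-coefficient of the left-hand side is simply $(\lambda-\mu)b_d(\lambda+\mu)^{\,n-d}$. On the right-hand side each of the four terms is a degree-one factor in $\partial$ times a sum of $\partial$-degree $\le d$, so its $\partial^d$-coefficient comes from the $\partial^d$- and $\partial^{d-1}$-parts of that sum. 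Carrying this out, the $b_{d-1}$-contributions cancel among the four terms, and dividing by $b_d$ yields, with $p:=n-d$,
\begin{equation}\label{starplan}
(\lambda-\mu)(\lambda+\mu)^{p}=(c+d-2)\bigl(\lambda\mu^{p}-\lambda^{p}\mu\bigr)+\lambda^{p+1}-\mu^{p+1}.
\end{equation}

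The rest is bookkeeping with binomial coefficients. Expanding the left side, the coefficient of $\lambda^{j}\mu^{p+1-j}$ is $\binom{p}{j-1}-\binom{p}{j}$, while on the right side the only mixed monomials are $\lambda\mu^{p}$ and $\lambda^{p}\mu$. Comparing the coefficients of $\lambda^{j}\mu^{p+1-j}$ for $2\le j\le p-1$ therefore forces $\binom{p}{j-1}=\binom{p}{j}$; taking $j=2$ gives $p=\binom{p}{2}$, which holds only for $p=3$, so $p\ge 4$ is impossible and $\deg_\partial Q_n=n-p\ge n-3$. This proves the first assertion. For the ``moreover'' part, assume $p\in\{2,3\}$, i.e.\ $d=n-2$ or $d=n-3$; here $\lambda^{p}\mu$ is a monomial distinct from $\lambda\mu^{p}$, and comparing its coefficients gives $p-1=-(c+d-2)$, i.e.\ $c+d-2=1-p$. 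Substituting $d=n-p$ collapses this to $c+n-3=0$, as claimed (the excluded case $p=1$ causes no constraint precisely because $\lambda\mu^{p}$ and $\lambda^{p}\mu$ coincide and cancel).

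I expect the only real obstacle to be the computation producing (\ref{starplan}): one must track the $\partial^d$- and $\partial^{d-1}$-parts of all four right-hand sums and verify that the subleading ($b_{d-1}$) terms cancel, so that $b_d$ factors out and the entire problem reduces to the single polynomial identity (\ref{starplan}). Everything after that is elementary combinatorics. I would emphasize that the bound $d\ge 2$ is not a technicality but genuinely needed: it is exactly what prevents the $(c-1)\partial$ term from contaminating the $\partial^d$-coefficient, and it is supplied automatically by the skew-symmetric normal form for $n\ge 4$.
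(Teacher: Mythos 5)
Your proof is correct and follows essentially the same route as the paper: the identity you label (starplan) is exactly the paper's equation $h^m_{n-m}(c,\lambda,\mu)=0$ obtained from the leading $\partial$-coefficient (their (\ref{leadingcoefficientequals0}) specialized to $d=0$), and your binomial-coefficient comparison is just an explicit version of the paper's claim that $h_j^m\neq 0$ for $j\geq 4$ together with the conditions under which $h_2^m$ and $h_3^m$ vanish. The only cosmetic difference is that you re-derive the leading-coefficient identity directly from (\ref{jacobicequal0homogeneous}) and justify $\deg_\partial Q_n\geq 2$ via the normal form of \Cref{Skew-symmetry} rather than via \Cref{keycorollary}.
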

\begin{proof}
Let us assume that $\deg_\partial Q_n(\lambda, \partial)=m$ and $Q_n(\lambda, \partial)=\sum_{i=0}^mb_{i}\lambda^{n-i}\partial^i$ such that $b_m\neq 0$. Since $Q_n(\lambda, \partial)$ is skew-symmetric, by \Cref{keycorollary}, the coefficient of $\partial^m$ in $Q_n(\lambda,\partial)$ is of degree less than or equal to $m-1$, i.e.,  $n-m\leq m-1$.  Hence $m\geq \lfloor \frac{n+1}{2}\rfloor$ which implies that $m\geq 2$ since $n\geq 4$. By (\ref{leadingcoefficientequals0}), we have
\begin{equation*}
b_m h_{n-m}^m(c, \lambda, \mu)=0,
\end{equation*}  
where $
h_j^m(c, \lambda, \mu):=(c+m-2)\lambda-\mu) \mu^j-((c+m-2)\mu-\lambda)\lambda^j-(\lambda-\mu)(\lambda+\mu)^j.$
By straightforward calculations, we have $h_1^m(c, \lambda, \mu)\equiv 0$ for all $c$  and $ h_j^m(c, \lambda, \mu)\neq 0$ for all $j\geq 4$. Moreover, we have 
\begin{align}
&h_0^m(c, \lambda, \mu)=0 \quad \text{~if and only if~}  c+m-2=0,\label{secondcondition1}\\
&h_2^m(c, \lambda, \mu)=0 \quad \text{~if and only if~}  c+m-1=0,\label{mequalsnminus2}\\
&h_3^m(c, \lambda, \mu)=0 \label{thirdcondition}\quad \text{~if and only if~}  c+m=0.
\end{align}
As $b_m h_{n-m}^m(c, \lambda, \mu)=0$ and $b_m\neq 0$,  (\ref{secondcondition1})--(\ref{thirdcondition}) imply that $n-m\leq 3$, i.e., $\deg_\partial Q_n(\lambda, \partial)=m \geq n-3$ which proves the first part of the lemma. 

If $m=n-2$, then  $c+m-1=0$, i.e., $c+n-3=0$ by (\ref{mequalsnminus2}). 

If $m=n-3$, then $c+m=0$, i.e., $c+n-3=0$ by (\ref{thirdcondition}). 
\end{proof}
\begin{cor}\label{killdegree4}
Let $n\geq 4$ and $n\neq 3-c$. Then by a suitable change of basis we can kill $Q_n(\lambda, \partial)$.
\end{cor}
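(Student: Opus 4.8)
The plan is to kill $Q_n$ by a single change of basis $B'=B+k\partial^{n-1}A$ and then to let the rigidity built into \Cref{secondkeylemma} finish the job. Throughout I use the homogeneity of (\ref{Jacobifinalcase}) observed at the beginning of Case 3b, so that $Q_n$ is itself a skew-symmetric solution of (\ref{Jacobifinalcase}), together with the fact (recorded just before \Cref{degreelessorequal4}) that the substitution $B'=B+k\partial^{m-1}A$ leaves $[B'_\lambda A]=(c\lambda+\partial)A$ untouched and modifies only the degree-$m$ homogeneous component of $Q$.

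First I would pin down $\deg_\partial Q_n$ under the standing hypothesis $c+n-3\neq 0$. If $Q_n=0$ there is nothing to prove, so assume $Q_n\neq 0$. By \Cref{constanttermzero} we have $Q_n(0,\partial)=0$, which forces $\deg_\partial Q_n\leq n-1$. On the other hand \Cref{secondkeylemma} gives $\deg_\partial Q_n\geq n-3$ and excludes the values $n-2$ and $n-3$ precisely because $c+n-3\neq 0$. Hence $\deg_\partial Q_n=n-1$, and by homogeneity the coefficient of $\partial^{n-1}$ in $Q_n$ is a nonzero multiple $b_{n-1}\lambda$ of $\lambda$ (the $\lambda$-degree being forced by homogeneity, in agreement with \Cref{keycorollary}).

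Next I would carry out the change of basis with $m=n$. By (\ref{changeofbasisformula}) the substitution $B'=B+k\partial^{n-1}A$ alters $Q_n$ by $k(c+n-3)\lambda\partial^{n-1}$ together with terms involving strictly lower powers of $\partial$, and introduces no power of $\partial$ higher than $n-1$ (the two $\partial^n$ contributions cancel). Since $c+n-3\neq 0$ I may choose $k=-b_{n-1}/(c+n-3)$, which cancels the coefficient of $\lambda\partial^{n-1}$; because $\lambda\partial^{n-1}$ is the only degree-$n$ monomial involving $\partial^{n-1}$, the resulting degree-$n$ component $Q_n'$ satisfies $\deg_\partial Q_n'\leq n-2$.

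Finally I would close with rigidity. In the new basis $\{A,B'\}$ the algebra still satisfies the Case 3b hypotheses, so $Q_n'$ is again a skew-symmetric degree-$n$ solution of (\ref{Jacobifinalcase}). If $Q_n'$ were nonzero, the degree analysis of the second paragraph would force $\deg_\partial Q_n'=n-1$, contradicting $\deg_\partial Q_n'\leq n-2$. Therefore $Q_n'=0$, which is exactly the assertion that the change of basis kills $Q_n$. I expect the only delicate point to be the bookkeeping showing that this single substitution lowers $\deg_\partial$ strictly below the rigid value $n-1$ while disturbing no other homogeneous component; once that is verified, \Cref{secondkeylemma} supplies the contradiction essentially for free.
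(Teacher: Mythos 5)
Your proposal is correct and follows essentially the same route as the paper: bound $\deg_\partial Q_n\leq n-1$ via \Cref{constanttermzero}, use the substitution $B'=B+k\partial^{n-1}A$ with (\ref{changeofbasisformula}) and $c+n-3\neq 0$ to remove the $\lambda\partial^{n-1}$ term, and then invoke \Cref{secondkeylemma} to conclude the resulting component vanishes. The only cosmetic difference is that you first pin down $\deg_\partial Q_n=n-1$ exactly before changing basis, whereas the paper performs the substitution directly; both are fine.
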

\begin{proof}
By \Cref{constanttermzero}, we have $\deg_\partial Q_n(\lambda, \partial)\leq n-1$. 
We show that we can kill off the  $\lambda\partial^{n-1}$ term in $Q_n(\lambda, \partial)$. If $B'=B+k\partial^{n-1}A$, then $[B'_\lambda B']=(2\lambda+\partial)B'+Q'(\lambda, \partial)A$. Write $Q'(\lambda, \partial)=\sum Q_\ell'(\lambda, \partial)$, where $Q_\ell'(\lambda, \partial)$ is the homogeneous component of degree $\ell$ in $Q'(\lambda, \partial)$. By (\ref{changeofbasisformula}), we have  $Q_\ell'(\lambda, \partial) = Q_\ell(\lambda, \partial)$ for $\ell \neq n$, and
\begin{align*}
Q_n'(\lambda, \partial) = Q_n(\lambda, \partial) +k(c+n-3)\lambda\partial^{n-1}+...,
\end{align*}
where ``...'' stands for some polynomial whose degree with respect to $\partial$ is less than $n-1$.  
Since $n\neq 3-c$, we may choose $k$ properly to kill off the  $\lambda\partial^{n-1}$ term in $Q_n(\lambda, \partial)$, i.e., $\deg_\partial Q_n'(\lambda, \partial)\neq n-1$. Then \Cref{secondkeylemma} implies that $Q_n'(\lambda, \partial)=0$ as $c+n\neq 3$.  
\end{proof}

Thus for $n\geq 5$, we only need to consider the special case where $n=3-c$. 
\begin{lemma}\label{nequals3minusc}
If $n=3-c\geq 5$, then $\deg_\partial Q_n(\lambda, \partial) \neq n-1$. Moreover, by a suitable change of basis we can kill off the  $\lambda^2\partial^{n-2}$ term in $Q_n(\lambda, \partial)$. 
\end{lemma}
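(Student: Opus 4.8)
The plan is to treat the two assertions separately, using the homogeneous Jacobi relation (\ref{jacobicequal0homogeneous}) for the first and the explicit change-of-basis formula (\ref{changeofbasisformula}) for the second. Throughout I would write $Q_n(\lambda,\partial)=\sum_{i=0}^n b_i\lambda^{n-i}\partial^i$, so that $\deg_\partial Q_n=n-1$ is equivalent to $b_{n-1}\neq 0$; recall that $b_n=0$ already by \Cref{constanttermzero}, so a priori $\deg_\partial Q_n\leq n-1$.

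For the first assertion I would extract and compare the coefficient of $\partial^{n-2}$ on both sides of (\ref{jacobicequal0homogeneous}), obtaining an identity of polynomials of degree $3$ in $\lambda,\mu$. Since $n\geq 5$, the summand $(c\lambda+c\mu+(c-1)\partial)\sum_i b_i\lambda^{n-i}(-\lambda-\mu)^i$ on the left has $\partial$-degree at most $1<n-2$ and contributes nothing, so the left side collapses to $(\lambda-\mu)b_{n-2}(\lambda+\mu)^2$. On the right only $b_{n-1},b_{n-2},b_{n-3}$ enter, and after collecting the four monomials $\lambda^3,\lambda^2\mu,\lambda\mu^2,\mu^3$ one finds that the $\lambda^3$ and $\mu^3$ comparisons are automatic, while the $\lambda^2\mu$ and $\lambda\mu^2$ comparisons both collapse to the single relation
\begin{equation*}
(c+n-3)\,b_{n-2}=\left(c(n-1)+\binom{n-1}{2}\right)b_{n-1}.
\end{equation*}
Substituting $n=3-c$ annihilates the left-hand side, while the scalar on the right simplifies to $c(n-1)+\binom{n-1}{2}=\tfrac{(n-1)(4-n)}{2}$, which is nonzero precisely because $n\geq 5$. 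Hence $b_{n-1}=0$, i.e. $\deg_\partial Q_n\neq n-1$ (and in fact $\deg_\partial Q_n\leq n-2$, so by \Cref{keycorollary} its top $\partial$-term is $b_{n-2}\lambda^2\partial^{n-2}$).

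For the second assertion I would set $B'=B+k\partial^{n-1}A$ and read off (\ref{changeofbasisformula}) with $m=n$. Because $c+n-3=0$, the coefficient of $\lambda\partial^{n-1}$ is left unchanged by the substitution (consistent with the first part, and explaining why this basis change cannot lower the degree directly, as noted in \Cref{secondkeylemma}), whereas the coefficient of $\lambda^2\partial^{n-2}$ is shifted by $k(n-1)\!\left(c+\tfrac{n-2}{2}\right)=k(n-1)\tfrac{4-n}{2}$, and all remaining modifications occur in $\partial$-degree strictly below $n-2$. As this shift is a nonzero multiple of $k$ for $n\geq 5$, a unique choice of $k$ cancels the coefficient $b_{n-2}$ of $\lambda^2\partial^{n-2}$; moreover the substitution only alters the homogeneous component $Q_n$, so no other degrees are disturbed.

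The main obstacle is the bookkeeping in the first step: one must track exactly which of $b_{n-1},b_{n-2},b_{n-3}$ feed each cubic monomial and verify that the two nontrivial comparisons really do collapse to one relation. The conceptual point, once that relation is available, is that the scalar $c(n-1)+\binom{n-1}{2}$ degenerates to $0$ exactly at $n=4$ (i.e. $c=-1$) and is nonzero for every $n\geq 5$; this is precisely where the hypothesis $n=3-c\geq 5$ is indispensable, and it is why the borderline degree-four case is excluded here and handled separately in \Cref{degreelessorequal4}.
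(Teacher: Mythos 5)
Your proposal is correct and follows essentially the same route as the paper: the first assertion is exactly the paper's use of the coefficient-of-$\partial^{n-2}$ identity (its equation for the second leading coefficient, with $m=n-1$, $f_m=b_{n-1}\lambda$, $f_{m-1}=b_{n-2}\lambda^2$, $d=0$), which after substituting $c=3-n$ forces $\bigl(c(n-1)+\binom{n-1}{2}\bigr)b_{n-1}=\tfrac{(n-1)(4-n)}{2}b_{n-1}=0$ and hence $b_{n-1}=0$ for $n\geq 5$; the second assertion is the same change of basis $B'=B+k\partial^{n-1}A$ with the shift $k(n-1)\bigl(c+\tfrac{n-2}{2}\bigr)\lambda^2\partial^{n-2}$ read off from the change-of-basis formula. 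Your intermediate relation $(c+n-3)b_{n-2}=\bigl(c(n-1)+\binom{n-1}{2}\bigr)b_{n-1}$ and the cancellation checks are consistent with the paper's computation, so there is nothing to correct.
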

\begin{proof}
If $\deg_\partial Q_n(\lambda, \partial) = n-1$, then $Q_n(\lambda, \partial)=\sum_{i\geq 0}^{n-1}b_i \lambda^{n-i}\partial^i$ for some $b_{n-1}\neq 0$. As $n-1\geq 3$, by (\ref{secondleadingcoefficientequals0}), we have (where in our case, $m=n-1,$  $f_m(\lambda)=b_{n-1}\lambda$ , $f_{m-1}(\lambda)=b_{n-2}\lambda^2$ and $d=0$)
\begin{align}\label{n=3-c}
\left(cm+\dfrac{m(m-1)}{2}\right)\lambda^2\mu b_{n-1}-\left(cm+\dfrac{m(m-1)}{2}\right)\mu^2\lambda b_{n-1}=0. 
\end{align} 
But (\ref{n=3-c}) is satisfied only when $c=-1$ and $n=4$. Hence we have $\deg_\partial Q_n(\lambda, \partial) \neq n-1$.

For $B'=B+k\partial^{n-1}A$, we have $[B'_\lambda B']=(2\lambda+\partial)B'+Q'(\lambda, \partial)A$ for some polynomial $Q'(\lambda, \partial)$.  If we write $Q'(\lambda, \partial)=\sum_{\ell \geq 0} Q_\ell'(\lambda, \partial)$, where $Q_\ell'(\lambda, \partial)$ is the homogeneous component of degree $\ell$ of $Q'(\lambda, \partial)$, then by (\ref{changeofbasisformula}), we have 
\begin{align*}
Q_n'(\lambda, \partial) = Q_n(\lambda, \partial) +k(n-1)\left(c+\frac{n-2}{2}\right)\lambda^2\partial^{n-2}+...,
\end{align*}
where ``...'' stands for some polynomial whose degree with respect to $\partial$ is less than $n-2$. Hence we may choose $k$ properly to kill off the $\lambda^2\partial^{n-2}$ term in $Q_n(\lambda, \partial)$.
\end{proof}

 By \Cref{nequals3minusc}, if $n=3-c\geq 5$ then we may assume that $\deg_{\partial}Q_{n}(\lambda, \partial)\leq n-3$. Moreover, by \Cref{secondkeylemma} if $Q_{n}(\lambda, \partial)\neq 0$, then $\deg_{\partial}Q_{n}(\lambda, \partial)= n-3$ . 

\begin{remark}\label{uniqueness}
When $n=3-c\geq 5$, if there exists a skew-symmetric and  homogeneous polynomial $Q_n(\lambda, \partial)$ of degree $n$ satisfying  (\ref{Jacobifinalcase}), such that $\deg_\partial Q_n(\lambda, \partial)=n-3$, then it is unique up to scalar multiples. Indeed, if there are two such polynomials, after multiplying by a scalar, we can assume that their leading terms $\lambda^3\partial^{n-3}$ have the same coefficients. Their difference is still skew-symmetric and satisfies   (\ref{Jacobifinalcase}), but is of degree less than or equal to $n-4$  with respect to $\partial$, hence must be zero by \Cref{secondkeylemma}. 
\end{remark}

\begin{cor}\label{cminus2minus3}
If $c=-2, -3$, then by a suitable change of basis we can kill $Q_{3-c}(\lambda, \partial)$. 
\end{cor}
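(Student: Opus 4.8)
The plan is to set $n=3-c$, so that the statement concerns $n=5$ (when $c=-2$) and $n=6$ (when $c=-3$); in both cases $c+n-3=0$, which is exactly why these values escape \Cref{killdegree4}. I would argue that, after the normalizations already at our disposal, $Q_{3-c}(\lambda,\partial)$ is in fact forced to vanish, so that the change of basis supplied by \Cref{nequals3minusc} does the killing.

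First I would recall the two bounds on the $\partial$-degree of $Q_n$ that are already in place. By \Cref{nequals3minusc} and the observation following it, a suitable change of basis lets us assume $\deg_\partial Q_n(\lambda,\partial)\le n-3$. On the other hand, if $Q_n\ne 0$ then \Cref{secondkeylemma} gives $\deg_\partial Q_n(\lambda,\partial)\ge n-3$. Assuming $Q_n\ne 0$, these two bounds pin down $\deg_\partial Q_n(\lambda,\partial)=n-3$ exactly.

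The crux of the argument is then to inspect the coefficient of the top power $\partial^{n-3}$. Since $Q_n$ is homogeneous of total degree $n$, this coefficient is a homogeneous polynomial in $\lambda$ of degree $n-(n-3)=3$, hence a nonzero scalar multiple of $\lambda^3$. But \Cref{keycorollary}, applied to the skew-symmetric polynomial $Q_n$ with $\deg_\partial Q_n=n-3$, forces the coefficient of $\partial^{n-3}$ to have $\lambda$-degree at most $(n-3)-1=n-4$. Comparing the two gives $3\le n-4$, i.e.\ $n\ge 7$. Since here $n\in\{5,6\}$, this is a contradiction, so $Q_n=Q_{3-c}=0$.

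I do not expect a genuine computational obstacle here: the entire content is the numerical incompatibility between the upper bound $n-3$ coming from the degree-reduction lemma and the lower bound forced by skew-symmetry through \Cref{keycorollary}. The only point requiring care is to notice that these bounds become consistent precisely when $n\ge 7$; the small values $n=5,6$ (equivalently $c=-2,-3$) are squeezed out, whereas larger $n$ genuinely leave room for a nonzero solution, which is exactly why the corollary isolates $c=-2,-3$.
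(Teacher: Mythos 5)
Your proof is correct and follows essentially the same route as the paper: after \Cref{nequals3minusc} and \Cref{secondkeylemma} pin $\deg_\partial Q_{3-c}$ at $n-3=-c$, the leading coefficient is a nonzero multiple of $\lambda^3$, which violates \Cref{keycorollary} precisely when $3\geq -c$, i.e.\ for $c=-2,-3$. This is exactly the paper's argument, just phrased as the inequality $n\geq 7$ rather than $3\geq -c$.
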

\begin{proof}
By \Cref{nequals3minusc}, if $Q_{3-c}(\lambda, \partial)\neq 0$, we can assume that $\deg_\partial Q_{3-c}(\lambda, \partial)=-c$.  Then the coefficient of $\lambda^{3}\partial^{-c}$ in $Q_{3-c}(\lambda, \partial)$ is non-zero, which contradicts  \Cref{keycorollary} as $3\geq -c$. 
\end{proof}

\begin{lemma}\label{killdegree5ceven}
Let $n=3-c$. Assume that $n \geq 6$ if $n$  is even and $n\geq 11$  if $n$ is odd. If $\deg_\partial Q_n(\lambda, \partial)\leq n-3$, then $Q_n(\lambda, \partial)=0$. 
\end{lemma}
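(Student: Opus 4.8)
The plan is to argue by contradiction. Suppose $Q_n(\lambda,\partial)\neq 0$ with $\deg_\partial Q_n(\lambda,\partial)\leq n-3$. Since $n=3-c$ means $c+n-3=0$, \Cref{secondkeylemma} already forces $\deg_\partial Q_n(\lambda,\partial)=n-3$ exactly. First I would put $Q_n$ into its skew-symmetric invariant form via \Cref{Skew-symmetry}, writing $Q_n(\lambda,\partial)=(2\lambda+\partial)\sum_{i\geq 0}k_i(\lambda^2+\lambda\partial)^i\partial^{\,n-1-2i}$ with $k_i\in\C$ and $0\leq i\leq\lfloor(n-1)/2\rfloor$. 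Inspecting the top powers of $\partial$, the summand of index $i$ has leading $\partial$-term a nonzero multiple of $\lambda^i\partial^{\,n-i}$, and a short triangular computation shows that the vanishing of the coefficients of $\partial^n,\partial^{n-1},\partial^{n-2}$ is equivalent to $k_0=k_1=k_2=0$. Thus $Q_n(\lambda,\partial)=(2\lambda+\partial)\sum_{i\geq 3}k_i(\lambda^2+\lambda\partial)^i\partial^{\,n-1-2i}$, and $\deg_\partial Q_n=n-3$ is precisely the statement $k_3\neq 0$.

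Next I would substitute this expression into the homogeneous Jacobi identity (\ref{jacobicequal0homogeneous}) with $c=3-n$ and compare coefficients of decreasing powers of $\partial$. Writing $m:=n-3$, one has $c+m-2=-2$, $c+m-1=-1$, and $c+m=0$, so the top-order relation (\ref{leadingcoefficientequals0}) is satisfied identically by the $\lambda^3$-coefficient; this is exactly the borderline case (\ref{thirdcondition}) of \Cref{secondkeylemma}, and no constraint is obtained at the top. Descending one step, the analogue of (\ref{secondleadingcoefficientequals0}) forces the coefficient of $\partial^{n-4}$ to be a definite multiple of that of $\partial^{n-3}$, and iterating produces a recursion that determines every $k_i$ as an explicit multiple of $k_3$, the multipliers being rational in $n$ and built from the binomial coefficients in the expansion of $(\lambda^2+\lambda\partial)^i$. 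By \Cref{uniqueness} the solution space is at most one-dimensional, so it suffices to show that propagating $k_3\neq 0$ through the recursion is inconsistent with the remaining (heavily over-determined) lower-order relations.

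The argument then splits according to the parity of $n$, since the range of admissible indices differs. For $n$ even the index $(n-1)/2$ is not an integer, so there is no $\partial^0$-generator — consistent with $Q_n(\lambda,0)=0$ from \Cref{constanttermzero} — and the admissible range is $3\leq i\leq n/2-1$, which is already empty when $n=6$. The absence of this generator removes the degree of freedom on which the exceptional solutions rely, and the recursion forces $k_3=0$ for every even $n\geq 6$. For $n$ odd the extra generator $i=(n-1)/2$, supported on $\partial^0$, survives, the system is genuinely larger, and nonzero solutions do occur — precisely at $n=7$ and $n=9$, i.e.\ $c=-4$ and $c=-6$ (note that for $n=7$ the single generator $i=3$ is itself the $\partial^0$-generator). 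The hypothesis $n\geq 11$ is imposed exactly to exclude these two sporadic values; for odd $n\geq 11$ the same descent makes the over-determined system inconsistent, forcing $k_3=0$ and hence $Q_n(\lambda,\partial)=0$.

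The main obstacle is the bookkeeping in the odd case: one must verify that the residual compatibility conditions produced by the recursion form a nonzero system for all odd $n\geq 11$, vanishing only at $n=7,9$. Concretely, I expect the cleanest route is to fix $k_3$, use \Cref{uniqueness} to reduce to checking that the recursively determined candidate actually satisfies (\ref{jacobicequal0homogeneous}), and then read off from the explicit multipliers that the relevant combinations of binomial coefficients cancel across consecutive $\partial$-orders only at the two excluded values. Confirming that no accidental cancellation revives a nonzero solution for large odd $n$ — and carrying the parity distinction cleanly through the expansion of $(\lambda^2+\lambda\partial)^i$ — is the delicate part of the computation.
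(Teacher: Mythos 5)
Your overall skeleton matches the paper's: put $Q_n$ in invariant/coefficient form, note that the Jacobi identity yields a first-order recursion so that all coefficients are fixed multiples of a single free parameter, and then split on parity. Your even case is essentially the paper's argument: for $n$ even there is no $\partial^0$-generator (equivalently, \Cref{constanttermzero} gives $b_0=0$ where $Q_n(\lambda,\partial)=\sum_{i=0}^{n-3}b_i\lambda^{n-i}\partial^i$), and the recursion (\ref{bkbkplusoneequation}), obtained from the coefficient of $\lambda^2\partial^k\mu^{n-1-k}$ in (\ref{jacobicequal0homogeneous}), has nonvanishing multipliers for $0\le k\le n-3$, so all coefficients vanish. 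That part is fine, though you should say explicitly that the multipliers $c\binom{k+1}{1}+\binom{k+1}{2}=(k+1)(6-2n+k)/2$ are nonzero in the relevant range.

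The genuine gap is the odd case, which is the actual content of the lemma. You write that for odd $n\ge 11$ "the same descent makes the over-determined system inconsistent" and then concede that confirming the absence of accidental cancellation "is the delicate part of the computation." That is a restatement of the lemma, not a proof of it: since nonzero solutions genuinely exist at $n=7$ and $n=9$, the recursion alone cannot be inconsistent in general, and you must (i) identify a \emph{specific} additional relation beyond (\ref{bkbkplusoneequation}), and (ii) verify quantitatively that it is nondegenerate for every odd $n\ge 11$. The paper does exactly this: comparing the coefficient of $\lambda^4\mu^{n-3}$ in (\ref{jacobicequal0homogeneous}) gives the extra relation (\ref{b0b3bminusc}), namely $b_0\bigl(\binom{n}{3}-\binom{n}{4}\bigr)+c(1-c)b_{n-3}=cb_3$; the recursion gives closed-form products expressing $b_3$ and $b_{n-3}$ as explicit multiples of $b_0$; substituting and simplifying reduces everything to showing that $\frac{7-n}{4}+(3-n)(n-2)\frac{(n-3)!\,(n-3)!}{(2n-6)!}+\frac{(n-3)(5-n)}{4(7-2n)}$ is nonzero for $n\ge 11$, which follows because each of the combined terms is strictly negative there. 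Without producing this (or an equivalent) explicit compatibility condition and carrying out the sign estimate, your argument does not distinguish $n\ge 11$ from the exceptional values $n=7,9$ and therefore does not establish the lemma.
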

\begin{proof}
Let $Q_n(\lambda, \partial)=\sum_{i=0}^{n-3}b_i\lambda^{n-i}\partial^i$. Comparing the coefficients of $\lambda^2\partial^k\mu^{n-1-k}$ in both sides of (\ref{jacobicequal0homogeneous}),  we have, 
\begin{equation}\label{bkbkplusoneequation}
\left(\binom{n-k}{1}-\binom{n-k}{2}\right) b_k=\left(c\binom{k+1}{1}+\binom{k+1}{2}\right)b_{k+1},
\end{equation}
for $0\leq k\leq n-3$
because the only terms containing the monomial $\lambda^2\partial^k\mu^{n-1-k}$ in (\ref{jacobicequal0homogeneous}) are $(\lambda-\mu)\sum_{i=0}^{n} b_i(\lambda+\mu)^{n-i}\partial^{i}$ and $(c\lambda+\partial)\sum_{i=0}^{n} b_i\mu^{n-i}(\lambda+\partial)^i$ as $b_n=b_{n-1}=b_{n-2}=0$ in our case.

When $n\geq 6$ is even, we have $b_0=0$ by \Cref{constanttermzero}. Note that for $k\leq n-3$, we always have  $c\binom{k+1}{1}+\binom{k+1}{2}\neq 0$. So $b_k=0$ by (\ref{bkbkplusoneequation}), i.e.,  $Q_n(\lambda, \partial)=0$. 

Now let $n=3-c\geq 11$ be odd. 
Comparing the coefficients of $\lambda^4\mu^{n-3}$ on both sides of (\ref{jacobicequal0homogeneous}),  we get
\begin{equation}\label{b0b3bminusc}
b_0\left(\binom{n}{3}-\binom{n}{4}\right)+c(1-c)b_{n-3}=cb_3. 
\end{equation}
Since $c\binom{k+1}{1}+\binom{k+1}{2}\neq 0$ for $k\leq n-3$, (\ref{bkbkplusoneequation}) gives us
\begin{equation*}
b_{k+1}=\dfrac{\Pi_{i= 0}^k(n-i)\left(\dfrac{3+i-n}{2}\right)}{\Pi_{i= 0}^k(i+1)\left(\dfrac{6-2n+i}{2}\right)}b_0=\dfrac{\Pi_{i=0}^k(n-i)(3+i-n)}{\Pi_{i= 0}^k(i+1)(6-2n+i)}b_0.
\end{equation*}
In particular, 
\begin{equation*}
b_{n-3}=\dfrac{\Pi_{i= 0}^{n-4}(n-i)(3+i-n)}{\Pi_{i= 0}^{n-4}(i+1)(6-2n+i)}b_0,
\end{equation*}
and 
\begin{equation*}
b_3=\dfrac{\Pi_{i= 0}^{2}(n-i)(3+i-n)}{\Pi_{i= 0}^{2}(i+1)(6-2n+i)}b_0=\dfrac{n(n-1)(n-2)}{3!}\dfrac{5-n}{4(7-2n)}b_0.
\end{equation*}
Note that $$\dfrac{\Pi_{i= 0}^{n-4}(n-i)}{\Pi_{i= 0}^{n-4}(i+1)}=\dfrac{n(n-1)(n-2)}{3!},\qquad \dfrac{\Pi_{i= 0}^{n-4}(3+i-n)}{\Pi_{i= 0}^{n-4}(6-2n+i)}=\dfrac{(n-3)!\times (n-3)!}{(2n-6)!}, $$ 
and $$\binom{n}{3}-\binom{n}{4}=\dfrac{n(n-1)(n-2)(7-n)}{3!\times 4}.$$
Replacing $b_3$ and $b_{n-3}$ by the above expressions in (\ref{b0b3bminusc}) and then dividing both sides of (\ref{b0b3bminusc}) by $\dfrac{n(n-1)(n-2)}{3!}$, we get 

\begin{equation*}
b_0\left(\dfrac{7-n}{4}+(3-n)(n-2)\dfrac{(n-3)!\times (n-3)!}{(2n-6)!}+\dfrac{(n-3)(5-n)}{4(7-2n)} \right)=0.
\end{equation*}
Note that $\dfrac{7-n}{4}+\dfrac{(n-3)(5-n)}{4(7-2n)}=\dfrac{n^2-13n+34}{4(7-2n)}<0$ when $n\geq 11$. It is obvious that $(3-n)(n-2)\dfrac{(n-3)!\times (n-3)!}{(2n-6)!}<0$ when $n\geq 11$. So we must have $b_0=0$. Then (\ref{bkbkplusoneequation})  implies that $b_k=0$ for all $k$, i.e., $Q_n(\lambda, \partial)=0$. 
\end{proof}

\begin{lemma}\label{n79}
If $c=-4$, then $Q_7(\lambda, \partial)=(2\lambda+\partial)(\lambda^2+\lambda\partial)^3$ satisfies (\ref{jacobicequal0homogeneous}). If $c=-6$, then $Q_9(\lambda, \partial)=(2\lambda+\partial)(11(\lambda^2+\lambda\partial)^4+2(\lambda^2+\lambda\partial)^3\partial^2)$ satisfies (\ref{jacobicequal0homogeneous}).
\end{lemma}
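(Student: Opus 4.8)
The plan is to verify each identity by direct substitution, leveraging the preceding results, which have already reduced the problem to a single check. Both proposed polynomials are written in the form $(2\lambda+\partial)g(\lambda^2+\lambda\partial,\partial)$ produced by \Cref{Skew-symmetry}, so skew-symmetry holds for free; and one reads off $\deg_\partial Q_7=4=7-3$ and $\deg_\partial Q_9=6=9-3$, matching the value forced by \Cref{secondkeylemma}. By \Cref{uniqueness} the space of skew-symmetric homogeneous solutions of \eqref{jacobicequal0homogeneous} of degree $n=3-c$ with $\deg_\partial=n-3$ is at most one-dimensional in each case, so it remains only to confirm that the displayed $Q_7$ (for $c=-4$) and $Q_9$ (for $c=-6$) actually satisfy \eqref{jacobicequal0homogeneous}.

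First I would expand into monomials: $Q_7=\sum_{i=0}^{4}b_i\lambda^{7-i}\partial^i$ with $(b_0,\dots,b_4)=(2,7,9,5,1)$ (and $b_5=b_6=b_7=0$), and similarly write out $Q_9$ for $c=-6$. Substituting these coefficients and the relevant value of $c$ turns both sides of \eqref{jacobicequal0homogeneous} into homogeneous polynomials of degree $n+1$ in $\lambda,\mu,\partial$, so the identity is equivalent to a finite collection of scalar equations, one per monomial $\lambda^a\mu^b\partial^e$ with $a+b+e=n+1$. A useful reduction is that the difference of the two sides is antisymmetric under $\lambda\leftrightarrow\mu$ --- this follows from the skew-symmetry of $Q$, exactly as in the derivation that led to \eqref{jacobicequal0homogeneous} --- so it suffices to match coefficients of the monomials with $a>b$; those with $a=b$ vanish automatically and those with $a<b$ are then forced. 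Among these equations, the coefficients of $\lambda^2\partial^k\mu^{n-1-k}$ are precisely the recursions \eqref{bkbkplusoneequation}, which the chosen $b_i$ satisfy (a quick check: $(\binom{7}{1}-\binom{7}{2})b_0=(-4)b_1=-28$ for $c=-4,\,k=0$, and so on); the remaining monomials are handled the same way.

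The only real obstacle is the size of the bookkeeping: \eqref{jacobicequal0homogeneous} is a degree-$8$ (respectively degree-$10$) polynomial identity in three variables, so there is no conceptual difficulty, merely a substantial but finite computation that is most safely confirmed by a computer algebra system. It is worth recording why these two isolated solutions appear: since $[B_\lambda A]=(c\lambda+\partial)A$ makes $\C[\partial]A$ a rank-one Virasoro conformal module, a polynomial $Q$ solving \eqref{jacobicequal0homogeneous} is exactly a $2$-cocycle of $\Vir$ valued in that module, and $c=-4,-6$ (i.e. $n=7,9$) are the only surviving exceptional weights once the small and generic cases have been eliminated in \Cref{killdegree4}, \Cref{cminus2minus3} and \Cref{killdegree5ceven}. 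Exhibiting these explicit cocycles, together with the uniqueness in \Cref{uniqueness}, thus closes the determination of $Q_n$ for all $n\ge 5$.
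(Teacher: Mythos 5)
Your proposal is correct and follows essentially the same route as the paper, which simply verifies \eqref{jacobicequal0homogeneous} for the two displayed polynomials by direct (if lengthy) computation; your expansion $(b_0,\dots,b_4)=(2,7,9,5,1)$ for $Q_7$ and the spot-checks against \eqref{bkbkplusoneequation} are accurate, and the $\lambda\leftrightarrow\mu$ antisymmetry reduction is a legitimate (and welcome) economy the paper does not bother to state.
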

\begin{proof}
This follows from straightforward calculations. Indeed, by the property of skew-symmetry and by the assumption that $\deg_\partial\,Q_{3-c}(\lambda, \partial)=-c$, we can give a general form for $Q_{3-c}(\lambda, \partial)$. Then we just need to check (\ref{jacobicequal0homogeneous}) for $Q_{3-c}(\lambda, \partial)$. 
\end{proof}

\subsection{\textbf{The classification}}

\begin{theorem}\label{maintheorem}
	Let $R$ be a non-semisimple rank two  Lie conformal algebra. Then, up to isomorphism, $R$ is one of the following types.
	\begin{itemize}
		\item[(1)] If $R$ is nilpotent, then 
		$R\cong R_{nil}(Q(\lambda, \partial))$, and  $R_{nil}(Q(\lambda, \partial))\cong R_{nil}(Q'(\lambda, \partial))$ if and only if $Q(\lambda, \partial)=kQ'(\lambda, \partial)$ for some $k\in \C^\times.$
		
		\item[(2)] If $R$ is solvable but not nilpotent, then $R\cong R_{sol}(a(\lambda))$, and  $R_{sol}(a(\lambda))\cong R_{sol}(a'(\lambda))$ if and only if $a(\lambda)=ka'(\lambda)$ for some $k\in \C^\times.$
		
		\item[(3)] If $R$ is not solvable, then we have two classes. 
		\begin{itemize}
			\item[(3i)] $R$ is the direct sum of a rank one commutative Lie conformal algebra and the Virasoro Lie conformal algebra. 
			\item[(3ii)]  $R$ has a basis $\{A, B\}$, such that $[A_\lambda A]=0, [B_\lambda A]=(c\lambda+d+\partial)A$ and $[B_\lambda B]=Q_c(\lambda, \partial)A+(2\lambda+\partial)B$ for some constants $c, d\in \C$ and some skew-symmetric polynomial $Q_c(\lambda, \partial)$. We denote such $R$ in this class by $ R(c, d, Q_c(\lambda, \partial))$.  Moreover, $Q_c(\lambda, \partial)\neq 0$ only when $d=0$ and $c\in \{1, 0, -1, -4, -6 \}$, in which case we document the explicit formulae for $Q_c(\lambda,\partial)$ in the following table.

			\begin{tabular}{|c|c|}
				\hline 
				$c$ &  $Q_c(\lambda, \partial)$, $\beta, \gamma \in \C$,  \\ 
				\hline 
				$1$ &  $\beta(2\lambda+\partial)$  \\ 
				\hline 
				$0$ &  $\beta(2\lambda+\partial)(\lambda^2+\lambda\partial)+\gamma(2\lambda+\partial)\partial$  \\ 
				\hline 
				$-1$ &  $\beta(2\lambda+\partial)\partial^2+\gamma(2\lambda+\partial)(\lambda^2+\lambda\partial)\partial$  \\ 
				\hline 
				$-4$ &  $\beta(2\lambda+\partial)(\lambda^2+\lambda\partial)^3$  \\ 
				\hline
				$-6$ &  $\beta(2\lambda+\partial)[11(\lambda^2+\lambda\partial)^4+2(\lambda^2+\lambda\partial)^3\partial^2]$  \\ 
				\hline 
			\end{tabular}

			\noindent  Moreover, $R(c, d, Q_c(\lambda, \partial))\cong R(c', d', Q_{c'}'(\lambda, \partial))$ if and only if $c=c', d=d'$ and $Q_c(\lambda, \partial)=kQ_{c'}'(\lambda, \partial)$ for some $k\in \C^\times$. 
		\end{itemize} 
	\end{itemize}
\end{theorem}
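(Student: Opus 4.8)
\emph{Plan of proof.} The statement is largely an assembly of the structural results established above, organized along the trichotomy produced by \Cref{keylemma} and \Cref{conditions}. First I would invoke \Cref{keylemma} to fix a basis $\{A,B\}$ with $[A_\lambda A]=0$ and $\C[\partial]A\vartriangleleft R$, and then \Cref{conditions} to reduce to the three normalized cases (\textbf{Case 1}, \textbf{Case 2}, \textbf{Case 3}). The plan is to check that each case lands in exactly one of the listed types. \textbf{Case 1} ($\alpha=0$, $b=0$) yields, via \Cref{nilpotent} and the two remarks following it, precisely the solvable families $R_{sol}(a(\lambda))$ and the nilpotent families $R_{nil}(Q(\lambda,\partial))$, i.e.\ parts (1) and (2), with the isomorphism criteria supplied verbatim by \Cref{isonilpotent}. \textbf{Case 2} ($\alpha=1$, $a=b=0$) is handled by \Cref{semidirect}, which after a change of basis gives $[B_\lambda B]=(2\lambda+\partial)B$ and $[A_\lambda B]=0$; this is the direct sum of the commutative rank one algebra $\C[\partial]A$ and the Virasoro algebra $\C[\partial]B$, i.e.\ type (3i). \textbf{Case 3} ($\alpha=1$, $b=1$, $a(\lambda)=c\lambda+d$) is type (3ii), and is where the real work lies.

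For the normal form in (3ii), I would split along $d$. When $d\neq 0$ (\textbf{Case 3a}), \Cref{dnotequal0} shows a change of basis kills $Q$ entirely, so $R\cong R(c,d,0)$. When $d=0$ (\textbf{Case 3b}), I would use the homogeneity of (\ref{Jacobifinalcase}) to treat each homogeneous component $Q_n(\lambda,\partial)$ separately and then collate the lemmas: \Cref{degreelessorequal4} disposes of $n\le 4$ (leaving $Q_1$ only for $c=1$, $Q_2$ only for $c=0$, the $\alpha_3$-part of $Q_3$ only for $c=-1$ together with the $\beta_3$-part only for $c=0$, and the $Q_4$-part only for $c=-1$); \Cref{killdegree4} kills every $Q_n$ with $n\ge 4$ and $n\neq 3-c$; and for the surviving value $n=3-c\ge 5$, \Cref{nequals3minusc} forces $\deg_\partial Q_n\le n-3$, after which \Cref{cminus2minus3} removes $c=-2,-3$, \Cref{killdegree5ceven} removes even $n\ge 6$ and odd $n\ge 11$, and \Cref{n79} together with \Cref{uniqueness} pins down the two remaining solutions at $c=-4$ ($n=7$) and $c=-6$ ($n=9$). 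Assembling these gives exactly the five rows of the table, all requiring $d=0$, which establishes the existence half of (3ii).

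It remains to prove the isomorphism criterion for (3ii). The key point I would establish first is that $\C[\partial]A$ is the solvable radical of $R$: it is an abelian (hence solvable) ideal, and since $R/\C[\partial]A\cong\Vir$ is semisimple, the image of any solvable ideal is a solvable ideal of $\Vir$, which is zero, so every solvable ideal lies in $\C[\partial]A$. Being a characteristic ideal, $\C[\partial]A$ is preserved by any isomorphism $\varphi\colon R(c,d,Q_c)\to R(c',d',Q'_{c'})$, so $\varphi(A)=sA'$ and, since $\{\varphi(A),\varphi(B)\}$ is a basis, $\varphi(B)=tB'+g(\partial)A'$ with $s,t\in\C^\times$, exactly as in the proof of \Cref{isonilpotent}. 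Comparing the two sides of $\varphi([B_\lambda A])=[\varphi(B)_\lambda\varphi(A)]$ and matching the coefficient of $\partial$ forces $t=1$, and then $c=c'$ and $d=d'$. Finally, comparing $\varphi([B_\lambda B])=[\varphi(B)_\lambda\varphi(B)]$ and extracting the coefficient of $A'$ shows that $sQ_c-Q'_{c'}$ equals a change-of-basis term in $g$ of the form (\ref{changeofbasisformula}); since the normalized polynomials in the table were chosen precisely transversal to such coboundary terms, this difference must vanish, giving $Q_c=kQ'_{c'}$ with $k=s^{-1}$. The converse (producing explicit isomorphisms by rescaling $A$ and $B$) is routine, and the fact that types (3i) and (3ii) are never isomorphic follows from \Cref{centralelement} and \Cref{cvandrcdqnonisomorphic}, since (3i) has non-trivial central elements while (3ii) has none.

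I expect the main obstacle to be this last step: cleanly justifying that the coboundary term $sQ_c-Q'_{c'}$ vanishes, i.e.\ that the normalized table entries are genuine cohomology representatives and that no change of basis by $g$ can relate two of them except by a scalar. This requires matching the skew-symmetry bookkeeping in the $[B_\lambda B]$ computation against the change-of-basis formula (\ref{changeofbasisformula}) degree by degree, and observing that on each surviving homogeneous component the coboundary operator vanishes (which is exactly why those components could not be normalized away in the first place). The completeness of the $c$-list, while requiring careful case management across the Case 3b lemmas, is otherwise mechanical.
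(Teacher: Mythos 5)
Your proposal follows essentially the same route as the paper: the same trichotomy from \Cref{keylemma} and \Cref{conditions}, the same assembly of the Case 3b lemmas (\Cref{degreelessorequal4}, \Cref{killdegree4}, \Cref{cminus2minus3}, \Cref{killdegree5ceven}, \Cref{nequals3minusc}, \Cref{uniqueness}, \Cref{n79}), and the same reduction of the isomorphism criterion in (3ii) to showing that the table entries cannot be coboundary terms, which is exactly the content of \Cref{finalcor} used in \Cref{isononsolvable}. The only cosmetic difference is that you identify $\C[\partial]A$ as the solvable radical (hence characteristic) to conclude $\varphi(A)\in\C[\partial]A'$, whereas the paper deduces $g(\partial)=0$ directly from $[\varphi(A)_\lambda\varphi(A)]=0$; both arguments are valid.
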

\begin{proof}
	It is clear that Lie conformal algebras of different types in $(1)$-$(3)$ are non-isomorphic.  The non-semisimple rank two Lie conformal algebras are divided in three cases by \Cref{conditions}.  The solvable case (Case 1) is done in \Cref{nilpotent} and  \Cref{isonilpotent}, which gives us the types $(1)$-$(2)$ in the list. 	For the non-solvable ones, we have Case 2 and Case 3. Case 2 is done in  \Cref{semidirect} and it gives us the class $(3i)$. We have divided Case 3 into two subcases. Case 3a is done in \Cref{dnotequal0} and it gives  us the $d=0$ and $Q_c(\lambda, \partial)=0$ part of the class $(3ii)$. 
	
	The Case 3b is the core of our work. Recall that in this subcase, we assume that $R$ has a basis $\{A, B\}$ satisfying $[A_\lambda A]=0, [B_\lambda A]=(c\lambda+\partial )A$ and $[B_\lambda B]=Q(\lambda, \partial)A+(2\lambda+\partial)B$. Let us write $Q(\lambda,\partial)=\sum_{n\geq 1} Q_n(\lambda, \partial)$, where $Q_n(\lambda, \partial)$ is the homogeneous component of degree $n$ of $Q(\lambda, \partial)$ as a polynomial in $\lambda$ and $\partial$.   
	
   If $c\notin \{1, 0, -1, -4, -6\}$, then by \Cref{degreelessorequal4}, we can kill $Q_n(\lambda, \partial)$ for $n\leq 4$. By  \Cref{killdegree4},  \Cref{cminus2minus3} and \Cref{killdegree5ceven},  we can kill $Q_n(\lambda, \partial)$ for $n\geq 5$. Thus by a suitable change of basis, we can set $Q(\lambda, \partial)=0$. 
    
    If $c\in \{1, 0, -1\}$, then by \Cref{killdegree4} we can kill $Q_n(\lambda, \partial)$ for $n\geq 5$. For $n\leq 4$, \Cref{degreelessorequal4} determines the parts of $Q_n(\lambda, \partial)$ which can be killed.

  If $c\in \{-4, -6\}$, then by  \Cref{degreelessorequal4}, we can kill $Q_n(\lambda, \partial)$ for $n\leq 4$. By \Cref{killdegree4}, we can kill $Q_n(\lambda, \partial)$ for $n\geq 5$ and $n\neq 3-c$. For $n=3-c$, by \Cref{nequals3minusc}, we can assume that $\deg_\partial Q_{3-c}(\lambda, \partial)=-c$ if it is not zero, and is unique up to scalar multiples by \Cref{uniqueness}. Then by \Cref{n79}, we get the formulae for $Q_{3-c}(\lambda, \partial)$ as listed in the table.

We prove the isomorphism between different types of Lie conformal algebras in the class $(3ii)$ in \Cref{isononsolvable}.
\end{proof}

\begin{remark}
	The dimension of the solution space of the polynomials $Q_c(\lambda, \partial)$ in the table of \Cref{maintheorem} were predicted by Theorem 7.2  in \cite{BakKacVoronov}, where the cohomology of the Virasoro Lie conformal algebra with coefficients in rank one modules was calculated.
	
\end{remark}
\begin{remark}
We call the $\lambda$-brackets in \Cref{maintheorem} the normalized $\lambda$-brackets.
\end{remark}

\begin{lemma}\label{finalcor}
	If $Q_c(\lambda, \partial)\neq 0$ is one of the polynomials appearing in the table of  \Cref{maintheorem}, then  for any $f(\partial)\in \C[\partial]$, \begin{equation}\label{eq:lemma2.23}
	Q_c(\lambda, \partial)\neq  f(\lambda+\partial)(c\lambda+\partial)+f(-\lambda)(c\lambda+c\partial-\partial)-(2\lambda+\partial)f(\partial).
	\end{equation}
\end{lemma}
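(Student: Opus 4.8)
The plan is to reinterpret the right-hand side of \eqref{eq:lemma2.23} as a coboundary and then obstruct the claimed equality by a $\partial$-degree count. In the setup of Case 3b, replacing $B$ by $B' = B + f(\partial)A$ for an arbitrary $f(\partial)\in\C[\partial]$ and expanding $[B'_\lambda B']$ via sesquilinearity and skew-symmetry — the same calculation behind \eqref{changeofbasisformula}, now for a general $f$ rather than a monomial — gives $[B'_\lambda B'] = (2\lambda+\partial)B' + \widetilde Q(\lambda,\partial)A$, where $\widetilde Q - Q$ is exactly the right-hand side of \eqref{eq:lemma2.23}. Thus, writing $\Delta(f)(\lambda,\partial) := f(\lambda+\partial)(c\lambda+\partial) + f(-\lambda)(c\lambda+c\partial-\partial) - (2\lambda+\partial)f(\partial)$, the lemma asserts precisely that a nonzero table entry $Q_c$ is never of the form $\Delta(f)$, i.e.\ it cannot be removed by any change of basis $B \mapsto B + f(\partial)A$. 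I would first record two structural facts: $\Delta(f)$ is again skew-symmetric (a one-line check under $\lambda\mapsto-\lambda-\partial$), and $\Delta$ is linear in $f$ and raises the total degree by one, so $\Delta(\partial^j)$ is homogeneous of degree $j+1$ in $(\lambda,\partial)$.

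The first reduction is to homogeneous components. Since $f=\sum_j k_j\partial^j$, we have $\Delta(f)=\sum_j k_j\,\Delta(\partial^j)$, and comparing degree-$n$ parts shows that an equality $Q_c=\Delta(f)$ would force $(Q_c)_n = k_{n-1}\Delta(\partial^{n-1})$ for the degree-$n$ homogeneous component $(Q_c)_n$ of $Q_c$. As $Q_c\neq 0$ has at least one nonzero component, it suffices to prove, for each component that can be nonzero, that $(Q_c)_n$ is not a scalar multiple of $\Delta(\partial^{n-1})$.

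Next I would extract only the information about $\Delta(\partial^{j})$ that this comparison needs, namely its $\partial$-degree. Expanding $\Delta(\partial^j) = (-\lambda)^j(c\lambda+(c-1)\partial) + (\lambda+\partial)^j(c\lambda+\partial) - (2\lambda+\partial)\partial^j$, the $\partial^{j+1}$ terms cancel, the coefficient of $\partial^{j}$ is $(c+j-2)\lambda$ for $j\ge 2$, and, when $c+j-2=0$ with $j\ge 4$, the coefficient of $\partial^{j-1}$ is $\tfrac{j(3-j)}{2}\lambda^2$. I would also compute the few low or degenerate instances directly: $\Delta(1) = (c-1)(2\lambda+\partial)$, $\Delta(\partial)=0$ for every $c$, $\Delta(\partial^2) = c(2\lambda+\partial)(\lambda^2+\lambda\partial)$, and $\Delta(\partial^3)=0$ when $c=-1$.

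Finally I would check the five rows against these computations. For $c=1$ with $n=1$, for $c=0$ with $n=2,3$, and for $c=-1$ with $n=4$, the relevant operator $\Delta(\partial^{n-1})$ is identically zero (by the displayed evaluations, using $c=1$, $c=0$, $c=-1$ respectively), so it cannot equal the nonzero component $(Q_c)_n$. In the remaining cases a $\partial$-degree mismatch does the work: for $c=-1$ with $n=3$ one has $\deg_\partial(Q_{-1})_3 = 3$ against $\deg_\partial\Delta(\partial^2)=2$; and for $c=-4,-6$ the surviving component satisfies $\deg_\partial(Q_c)_n = n-3$ (as established in \Cref{nequals3minusc} and \Cref{uniqueness}), while $c=3-n$ makes the leading $\lambda\partial^{n-1}$ coefficient of $\Delta(\partial^{n-1})$ vanish and the subleading $\tfrac{(n-1)(4-n)}{2}\lambda^2\partial^{n-2}$ coefficient survive (nonzero for $n\ge 5$), so $\deg_\partial\Delta(\partial^{n-1}) = n-2$. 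In every case $(Q_c)_n$ and $\Delta(\partial^{n-1})$ either have different $\partial$-degrees or the latter vanishes, hence cannot be proportional, and the lemma follows. I expect $c=-4,-6$ to be the main obstacle: there the coboundary is genuinely nonzero, so one must pin its $\partial$-degree down to exactly $n-2$, one more than that of the surviving cocycle. The vanishing of the leading coefficient alone (via \Cref{keycorollary} and \Cref{secondkeylemma}) does not suffice; the subleading coefficient must be shown not to vanish in order to force the one-step separation in $\partial$-degree.
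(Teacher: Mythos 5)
Your proposal is correct and follows essentially the same route as the paper: reduce to homogeneous components so that only the monomial $k\partial^{n-1}$ of $f$ can contribute in degree $n$, compute the resulting coboundary explicitly for $c\in\{1,0,-1\}$ (where it vanishes or has the wrong $\partial$-degree), and for $c\in\{-4,-6\}$ show its $\partial$-degree is $1-c=n-2$ while the table entry has $\partial$-degree $-c=n-3$. The only difference is that you make explicit the subleading coefficient $\tfrac{(n-1)(4-n)}{2}\lambda^2\partial^{n-2}$ where the paper appeals to ``straightforward calculations.''
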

\begin{proof}
It is enough to prove the lemma for the homogeneous components of $Q_c(\lambda, \partial)$ as a polynomial in $\lambda$ and $\partial$.  Denote the right side of (\ref{eq:lemma2.23}) by  $S^c_{f(\partial)}(\lambda)$ for a polynomial $f(\partial)$.  Let $h(\lambda, \partial)$ be a homogeneous component of $Q_c(\lambda, \partial)$ of degree $m+1$.  If $h(\lambda, \partial)=S^c_{f(\partial)}(\lambda)$ for some polynomial $f(\partial)$, then $f(\partial)$ must have a degree $m$ monomial $k\partial^m$ and $h(\lambda, \partial)=S^c_{k\partial^m}(\lambda)$. 

For $c\in \{0, 1, -1\}$, the total degree of $Q_c(\lambda, \partial)$ is less than or equal to $4$. Hence if $Q_c(\lambda, \partial)=S^c_{f(\partial)}(\lambda)$ for some polynomial $f(\partial)$, then we can assume that the degree of $\deg\, f(\partial)$ is less or equal to 3. For $f(\partial)=\sum_{i=0}^3k_i\partial^3$, we have 
$$S^c_{f(\partial)}(\lambda)=(2\lambda+\partial)[k_0(c-1)+k_2c(\lambda^2+\lambda\partial)+k_3(c+1)(\lambda^2+\lambda\partial)\partial].$$
Now it is clear that $Q_c(\lambda, \partial)\neq S^c_{f(\partial)}(\lambda)$ for any $f(\partial)$.

For $c\in \{-4, -6\}$ and $Q_c(\lambda, \partial)\neq 0$, the total degree of $ Q_c(\lambda, \partial)$ is $3-c$ and $\deg_\partial Q_c(\lambda, \partial)=-c$. Thus we can assume  $f(\partial)=k\partial^{2-c}$ if $Q_c(\lambda, \partial)=S^c_{f(\partial)}(\lambda)$. But by straightforward calculations, we have $\deg_\partial S^c_{k\partial^{2-c}}(\lambda)=1-c$ if $k\neq 0$.  

\end{proof}
\begin{prop}\label{isononsolvable}
The rank two Lie conformal algebras $R(c, d, Q_c(\lambda, \partial))$ and $R(c', d', Q_{c'}'(\lambda, \partial))$ are isomorphic if and only if $c=c', d=d'$ and $Q_c(\lambda, \partial)=kQ_{c'}'(\lambda, \partial)$ for some $k\in \C^\times.$ 
\end{prop}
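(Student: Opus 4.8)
The plan is to prove the two implications separately, with essentially all the content in the forward (``only if'') direction. For the reverse direction, assume $c=c'$, $d=d'$ and $Q_c=kQ'_{c'}$ with $k\in\C^\times$, and let $\{A,B\}$, $\{A',B'\}$ be the respective normalized bases. I would exhibit the explicit isomorphism given by the $\C[\partial]$-module map $A\mapsto \tfrac1k A'$, $B\mapsto B'$. The relations $[A_\lambda A]=0$ and $[B_\lambda A]=(c\lambda+d+\partial)A$ are then preserved trivially (using $c=c'$, $d=d'$), and for $[B_\lambda B]=Q_cA+(2\lambda+\partial)B$ the coefficient of $B'$ matches automatically while the coefficient of $A'$ matches precisely because $\tfrac1k Q_c=Q'_{c'}$.

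For the forward direction, the first key step is to characterize the submodule $\C[\partial]A$ intrinsically. Since $R/\C[\partial]A\cong\Vir$ is simple it is not solvable, so the image in $R/\C[\partial]A$ of any solvable ideal of $R$ is a solvable ideal of $\Vir$, hence zero; thus every solvable ideal of $R$ lies in $\C[\partial]A$. As $\C[\partial]A$ is itself an abelian (so solvable) ideal, it is the unique maximal solvable ideal, i.e. the radical of $R$, and is therefore preserved by any isomorphism. Hence for an isomorphism $\varphi\colon R_1\to R_2$, where $R_1=R(c,d,Q_c)$ and $R_2=R(c',d',Q'_{c'})$ carry normalized bases $\{A,B\}$ and $\{A',B'\}$, we get $\varphi(A)=sA'$ with $s\in\C^\times$ (the restriction of $\varphi$ being an isomorphism of the rank-one free modules $\C[\partial]A$ and $\C[\partial]A'$), and, since $\{\varphi(A),\varphi(B)\}$ is a $\C[\partial]$-basis of $R_2$ with $\varphi(A)\in\C[\partial]A'$, also $\varphi(B)=g(\partial)A'+tB'$ for some $t\in\C^\times$ and $g(\partial)\in\C[\partial]$.

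The second step extracts the numerical parameters. Applying $\varphi$ to $[B_\lambda A]=(c\lambda+d+\partial)A$ and using $[A'_\lambda A']=0$ gives $s(c\lambda+d+\partial)A'=st(c'\lambda+d'+\partial)A'$; comparing the coefficients of $\partial$, of $\lambda$, and the constant term forces $t=1$, $c=c'$ and $d=d'$. The third step applies $\varphi$ to $[B_\lambda B]=Q_cA+(2\lambda+\partial)B$. This requires $[A'_\lambda B']$, which I would compute from $[B'_\lambda A']=(c'\lambda+d'+\partial)A'$ via the skew-symmetry axiom $(C2)$ — the one genuinely delicate calculation, because of the $\partial$-dependence of $[B'_\lambda A']$; it yields $[A'_\lambda B']=(c'\lambda+(c'-1)\partial-d')A'$. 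Expanding $[\varphi(B)_\lambda\varphi(B)]$ and equating coefficients of $A'$ (those of $B'$ agree automatically), after substituting $c'=c$ and $d'=d$, produces the identity $sQ_c-Q'_{c'}=S^c_g$, where $S^c_g$ denotes the right-hand side of (\ref{eq:lemma2.23}) evaluated at $f=g$.

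The final step invokes \Cref{finalcor}. When $d=d'\neq 0$ both $Q_c$ and $Q'_{c'}$ vanish by the normalization, so $Q_c=kQ'_{c'}$ holds trivially; when $d=d'=0$ both $Q_c$ and $Q'_{c'}$ lie in the linear space of table polynomials attached to the given value of $c$, hence so does their combination $sQ_c-Q'_{c'}$. If $sQ_c-Q'_{c'}$ were non-zero it would be a non-zero table polynomial equal to $S^c_g$, contradicting \Cref{finalcor}; therefore $sQ_c=Q'_{c'}$, that is $Q_c=kQ'_{c'}$ with $k=1/s$. The main obstacle is conceptual rather than computational: isolating $\C[\partial]A$ as the canonical radical, and then recognizing that the discrepancy $sQ_c-Q'_{c'}$ produced by a change of basis is exactly the ``coboundary'' $S^c_g$, so that the rigidity statement of \Cref{finalcor} applies verbatim. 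The only technical care needed is the sign and operator bookkeeping in the skew-symmetry evaluation of $[A'_\lambda B']$.
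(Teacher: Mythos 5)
Your proof is correct and follows essentially the same route as the paper: deduce $\varphi(A)=sA'$, extract $t=1$, $c=c'$, $d=d'$ from $[B_\lambda A]$, and identify the discrepancy $sQ_c-Q'_{c'}$ with the expression $S^c_g$ so that \Cref{finalcor} forces it to vanish. The only (harmless) variation is your first step: the paper gets $\varphi(A)\in\C[\partial]A'$ directly from $0=[\varphi(A)_\lambda\varphi(A)]$ by reading off the $B'$-coefficient, whereas you characterize $\C[\partial]A$ intrinsically as the unique maximal solvable ideal, which is a valid and slightly more structural justification of the same fact.
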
	 
\begin{proof}
Let $\{A, B\}$ and $\{A', B'\}$ be bases of $R(c, d, Q_c(\lambda, \partial))$ and $R(c', d', Q_{c'}'(\lambda, \partial))$, respectively, satisfying the normalized $\lambda$-brackets as in  \Cref{maintheorem}. Assume that $\varphi$ is an isomorphism between $R(c, d, Q_c(\lambda, \partial))$ and $R(c', d', Q_{c'}'(\lambda, \partial))$, such that  $\varphi(A)=f(\partial)A'+g(\partial)B'$ and $\varphi(B)=p(\partial)A'+q(\partial)B'$ for some polynomials $f(\partial),g(\partial),p(\partial) \text{ and } q(\partial) \in \C[\partial]$. Then the equation  $$\varphi([A_\lambda A])=0=[\varphi(A)_\lambda \varphi(A)]$$ implies that $g(\partial)=0$. Since $\{\varphi(A), \varphi(B)\}$ forms a basis, we have $f(\partial)=s$ and $q(\partial)=t$ for some $s,t\in \C^\times $.  Moreover, $\varphi([B_\lambda A])=[\varphi(B)_\lambda \varphi(A)]$ implies that $t=1$ and $c=c', d=d'$.

When $d=d'\neq 0$, there is nothing to say, since  $Q_c(\lambda, \partial)=Q_{c'}'(\lambda, \partial)=0$. 

When $d=d'=0$, from $\varphi([B_\lambda B])=[\varphi(B)_\lambda \varphi(B)]$, we get $$Q_c(\lambda, \partial)-sQ_{c'}'(\lambda, \partial)=p(\lambda+\partial)(c\lambda+\partial)+p(-\lambda)(c\lambda+c\partial-\partial)-(2\lambda+\partial)p(\partial).$$ 
Since $c=c', d=d'$, $Q_c(\lambda, \partial)-sQ_{c'}'(\lambda, \partial)$ is also a polynomial of the form as listed in the table of  \Cref{maintheorem}. Thus by \Cref{finalcor}, we have $Q_c(\lambda, \partial)=sQ_{c'}'(\lambda, \partial)$. 

The converse is clear by defining the map $\varphi(A)= k^{-1} A'$ and $\varphi(B)= B'$. 
\end{proof}

\section{Automorphism groups of rank two Lie conformal algebras}\label{sec:3}

We devote this final section to compute the automorphism groups of rank two Lie conformal algebras.  We denote by $R_c$,  $R_{ss}$ and $R_{cs}$ the commutative, the semisimple and the direct sum of the  commutative and the Virasoro rank two Lie conformal algebras, respectively.  It is clear  that    $\Aut\, R_c\cong GL(2, \C[\partial])$.

Let $\{L^1, L^2\}$ be a basis of  $R_{ss}$ satisfying $[L^i_\lambda L^j]=\delta_{i, j}(2\lambda+\partial)L^i$. Assume that $\varphi(L^i)=f_i(\partial)L^1+g_i(\partial)L^2$ is an automorphism of $R_{ss}$, where $f_i(\partial), g_i(\partial)\in \C[\partial]$. 
Then we can show that for $i, j\in \{1, 2\}$, we have
\begin{equation}\label{fgij}
\delta_{i, j}f_i(\partial)=f_i(-\lambda)f_j(\lambda+\partial) \quad\mbox{~and~} \quad
\delta_{i, j}g_i(\partial)=g_i(-\lambda)g_j(\lambda+\partial).
\end{equation}
 Setting $i=j=1$ and  $i=j=2$ in (\ref{fgij}), we get $f_i(\partial), g_i(\partial)\in \{0, 1\}$ for $i\in \{1, 2\}$. Setting $i=1, j=2$ in (\ref{fgij}), we get $f_1(-\lambda)f_2(\lambda+\partial)=g_1(-\lambda)g_2(\lambda+\partial)=0$. So $$\begin{pmatrix}
	f_1(\partial)& g_1(\partial)\\
	f_2(\partial)& g_3(\partial)
\end{pmatrix}=\begin{pmatrix}
1&0\\
0& 1
\end{pmatrix}\quad \mbox{~or~} \quad \begin{pmatrix}
0&1\\
1& 0
\end{pmatrix},$$ i.e., $\Aut\, R_{ss}\cong \Z_2$. Let $\{K, L\}$ be a basis of $R_{cs}$ satisfying $[K_\lambda K]=[K_\lambda L]=0$ and $[L_\lambda L]=(2\lambda+\partial)L$. Let $\phi$ be an automorphism of $R_{cs}$. Then direction calculations show that $\phi(L)=L$ and $\phi(K)=cK$ for some $c\in \C^*$, i.e., $\Aut\,R_{cs}\cong \C^\times.$

\begin{lemma}\label{keylemmaforautomorphisms}
	Let $R$ be a non-commutative rank two Lie conformal algebra of type $R_{nil}(Q(\lambda, \partial)), R_{sol}(a(\lambda))$ or $R(c, d, Q_c(\lambda, \partial))$ with basis $\{A, B\}$ satisfying the normalized $\lambda$-brackets in \Cref{maintheorem}.  Let $\varphi\in \Aut\, R$ be an automorphism. Then $\varphi(A)=k_1A$  and $\varphi(B)=k_2B+p(\partial)A$ for some $k_1, k_2\in \C^\times$ and some  $p(\partial)\in \C[\partial]$. 
\end{lemma}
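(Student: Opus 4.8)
The plan is to write an arbitrary automorphism $\varphi$ in the basis $\{A,B\}$ as $\varphi(A)=f(\partial)A+g(\partial)B$ and $\varphi(B)=p(\partial)A+q(\partial)B$ for polynomials $f,g,p,q\in\C[\partial]$, and to reduce the entire statement to the single claim that $g=0$. Indeed, once $g=0$ is known, the matrix of $\varphi$ as a $\C[\partial]$-module map is lower triangular with determinant $fq$; since $\varphi$ is a bijective $\C[\partial]$-linear map of the free module $R$, its inverse is again $\C[\partial]$-linear, so $fq\in\C[\partial]^\times=\C^\times$. As the only units of $\C[\partial]$ are the nonzero constants, this forces $f=k_1$ and $q=k_2$ with $k_1,k_2\in\C^\times$, while $p(\partial)$ remains unconstrained. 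This is precisely the asserted form $\varphi(A)=k_1A$, $\varphi(B)=k_2B+p(\partial)A$, so everything hinges on proving $g=0$.

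To establish $g=0$ I would treat the types $R_{nil}(Q(\lambda,\partial))$ and $R(c,d,Q_c(\lambda,\partial))$ together by applying $\varphi$ to the relation $[A_\lambda A]=0$, which gives $[\varphi(A)_\lambda\varphi(A)]=0$. Expanding by bilinearity and sesquilinearity (\ref{eq:sesquilinearity}) and inserting the normalized $\lambda$-brackets, the coefficient of $B$ in $[\varphi(A)_\lambda\varphi(A)]$ equals $g(-\lambda)g(\lambda+\partial)(2\lambda+\partial)$ in type $R(c,d,Q_c)$, since the term $(2\lambda+\partial)B$ of $[B_\lambda B]$ is the only source of a $B$-component; in type $R_{nil}$ one instead finds that $[\varphi(A)_\lambda\varphi(A)]=g(-\lambda)g(\lambda+\partial)Q(\lambda,\partial)A$. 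Because $\C[\lambda,\partial]$ is an integral domain, $2\lambda+\partial\neq 0$, and $Q(\lambda,\partial)\neq 0$ (as $R$ is non-commutative), each vanishing condition forces $g=0$.

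The remaining type $R_{sol}(a(\lambda))$ with $a(\lambda)\neq 0$ is the \emph{main obstacle}, because here $[B_\lambda B]=0$, so $B$ itself already satisfies $[B_\lambda B]=0$; the computation above degenerates and $[\varphi(A)_\lambda\varphi(A)]=0$ no longer detects $g$. Instead I would characterize $\C[\partial]A$ intrinsically as the derived algebra $R'$. Using the axioms (C1) in \Cref{def:LCA2} one checks that $R'$ is stable under $\partial$, hence a $\C[\partial]$-submodule; since every bracket lands in $\C[\partial]A$ we have $R'\subseteq\C[\partial]A$, while $[B_\lambda A]=a(\lambda)A$ with $a\neq 0$ yields a nonzero scalar multiple of $A$ among the $n$-th products, so $A\in R'$ and therefore $R'=\C[\partial]A$. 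As $R'$ is defined purely from the $\lambda$-bracket, any automorphism satisfies $\varphi(R')=R'$, and from $A\in R'$ we obtain $\varphi(A)\in\C[\partial]A$, i.e. $g=0$. Combining the three types with the reduction of the first paragraph then completes the proof.
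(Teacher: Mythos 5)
Your proof is correct, and for the bulk of the argument it coincides with the paper's: the paper likewise writes $\varphi(A)=f(\partial)A+g(\partial)B$, reduces everything to showing $g=0$ (deducing that $f$ and $q$ are nonzero constants from the fact that $\{\varphi(A),\varphi(B)\}$ is a basis, which is your unit-determinant observation), and for $R(c,d,Q_c(\lambda,\partial))$ and $R_{nil}(Q(\lambda,\partial))$ kills $g$ by exactly your computation of $[\varphi(A)_\lambda\varphi(A)]$. The only genuine divergence is in the solvable case $R_{sol}(a(\lambda))$: the paper applies $\varphi$ to the relation $[B_\lambda A]=a(\lambda)A$ and notes that $a(\lambda)\varphi(A)=[\varphi(B)_\lambda\varphi(A)]\equiv 0 \bmod \C[\partial]A$, so $a(\lambda)g(\partial)B$ must vanish; you instead identify $\C[\partial]A$ intrinsically as the derived algebra $R'$ and invoke $\varphi(R')=R'$. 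Both are valid; the paper's version is a one-line computation requiring no auxiliary claim, while yours is more conceptual and explains \emph{why} $\C[\partial]A$ is forced to be $\varphi$-stable (it is canonically determined by the bracket), at the modest cost of verifying that $R'$ is a $\C[\partial]$-submodule containing $A$ — which you do correctly via (C1) and the nonvanishing of $a(\lambda)$. You also rightly note that this intrinsic route cannot replace the direct computation in the non-solvable cases, where $R'\not\subseteq\C[\partial]A$.
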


\begin{proof}
	Assume that $\varphi(A)=f(\partial)A+g(\partial)B$ and $\varphi(B)=p(\partial)A+q(\partial)B$, where $f(\partial), g(\partial), p(\partial), q(\partial)\in \C[\partial]$. We show that $g(\partial)=0$. Then $\{\varphi(A), \varphi(B)\}$ forms a basis of $R$ will imply that $f(\partial)$ and  $q(\partial)$ are some nonzero constants. 
	
	For $R=R(c, d, Q_c(\lambda, \partial))$, we have $g(\partial)=0$ because  $$ 0=[\varphi(A)_\lambda \varphi(A)]\equiv g(-\lambda)g(\lambda+\partial)(2\lambda+\partial) B {\mod \C[\partial]A}.$$
	 
	For $R=R_{nil}(Q(\lambda, \partial))$ and $Q(\lambda, \partial)\neq 0$, we have $g(\partial)=0$ because  $$0=[\varphi(A)_\lambda \varphi(A)]=g(-\lambda)g(\lambda+\partial)Q(\lambda, \partial)A.$$ 
	
	For $R=R_{sol}(a(\lambda))$ and $a(\lambda)\neq 0$, we have $g(\partial)=0$ because  $$a(\lambda)\varphi(A)=[\varphi(B)_\lambda \varphi(A)]\equiv 0 {\mod \C[\partial]A}.$$
	\end{proof}
 
\begin{lemma}\label{finallemma}
If $c, d\in \C$ and  $f(\partial)\in \C[\partial]$ satisfy
\begin{equation}\label{changeofbasisequation}
f(\lambda+\partial)(c\lambda+d+\partial)+f(-\lambda)(c\lambda+c\partial-d-\partial)-(2\lambda+\partial)f(\partial)=0,
\end{equation} then there exists some $a_0, a_1, a_2, a_3, k\in \C$, such that 

$$f(\partial)=\begin{cases}
k\left(1+\dfrac{1-c}{d}\partial\right) & \mbox{~if~} d\neq 0,\\
\delta_{c, 1}a_0+a_1\partial+\delta_{c, 0}a_2\partial^2+\delta_{c, -1}a_3\partial^3 &\mbox{~if~} d=0.
\end{cases}$$
 
\end{lemma}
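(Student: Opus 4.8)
The plan is to extract information from (\ref{changeofbasisequation}) by a single specialization, and then, in the degenerate case $d=0$, to exploit a grading by degree.

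First I would set $\lambda=0$ in (\ref{changeofbasisequation}). The terms carrying $f(\partial)$ and $\partial f(\partial)$ combine, and the only surviving relation is $d\,f(\partial)=f(0)\bigl(d+(1-c)\partial\bigr)$. When $d\neq 0$ this already forces $f(\partial)=f(0)\bigl(1+\tfrac{1-c}{d}\partial\bigr)$, so the first case is settled with $k=f(0)$ and no further work is needed. When $d=0$ the same specialization degenerates to $f(0)(1-c)\partial=0$, which records that the constant term of $f$ must vanish unless $c=1$; this is the origin of the factor $\delta_{c,1}$.

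For the remaining case $d=0$ I would observe that (\ref{changeofbasisequation}) is exactly the equation $S^c_{f(\partial)}(\lambda)=0$ in the notation of \Cref{finalcor}, and that this equation is homogeneous: writing $f(\partial)=\sum_m a_m\partial^m$, each summand $a_m S^c_{\partial^m}(\lambda)$ is homogeneous of degree $m+1$ in $\lambda,\partial$, so components of distinct degrees cannot cancel. Hence $f$ is a solution if and only if, for every $m$, either $a_m=0$ or $S^c_{\partial^m}(\lambda)=0$, and it suffices to decide for which pairs $(c,m)$ the polynomial $S^c_{\partial^m}(\lambda)$ vanishes. A direct expansion disposes of the small cases: $S^c_1=(c-1)(2\lambda+\partial)$, $S^c_\partial\equiv 0$, $S^c_{\partial^2}=c\lambda(\lambda+\partial)(2\lambda+\partial)$, and $S^c_{\partial^3}=(c+1)\lambda\partial(\lambda+\partial)(2\lambda+\partial)$. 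Thus $\partial^m$ is admissible exactly when $m=1$ (any $c$), $m=0$ with $c=1$, $m=2$ with $c=0$, or $m=3$ with $c=-1$, and these four possibilities assemble into precisely the claimed $\delta_{c,1}a_0+a_1\partial+\delta_{c,0}a_2\partial^2+\delta_{c,-1}a_3\partial^3$.

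The one point requiring a genuine, if short, argument is the exclusion of all $m\geq 4$; this is the core of the proof, though it mirrors the incompatibility already used in \Cref{constanttermzero}. Here I would compute only the coefficients of $\lambda^2\partial^{m-1}$ and of $\lambda^{m-1}\partial^2$ in $S^c_{\partial^m}$, which are distinct monomials once $m\geq 4$. For such $m$ only the term $(\lambda+\partial)^m(c\lambda+\partial)$ contributes to either (the other two terms have $\partial$-degree at most $1$ or $\lambda$-degree at most $1$), giving $cm+\binom{m}{2}$ and $c\binom{m}{2}+m$ respectively. Forcing both to vanish and eliminating $c$ yields $\binom{m}{2}^2=m^2$, i.e. $\binom{m}{2}=\pm m$, whose only solutions are $m=3$ and $m=-1$. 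Hence for $m\geq 4$ and every $c$ we have $S^c_{\partial^m}\neq 0$, so $a_m=0$, which finishes the proof. The only real effort is the coefficient bookkeeping for general $m$; the $\lambda=0$ specialization and the homogeneity observation do all the rest.
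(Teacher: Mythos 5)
Your proof is correct and follows essentially the same route as the paper's: the $\lambda=0$ specialization settles $d\neq 0$ and kills the constant term when $d=0$ unless $c=1$, and the degree-by-degree homogeneous decomposition reduces the $d=0$ case to deciding for which $(c,m)$ the polynomial $S^c_{\partial^m}$ vanishes. Your only addition is to carry out explicitly, via the coefficients of $\lambda^2\partial^{m-1}$ and $\lambda^{m-1}\partial^2$, the exclusion of $m\geq 4$ that the paper dismisses as a straightforward calculation (the same pair of coefficients it uses in \Cref{constanttermzero}), and that computation is correct.
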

\begin{proof}
For $d\neq 0$, set $\lambda=0$ in (\ref{changeofbasisequation}), we get  $f(\partial)d+f(0)((c-1)\partial-d)=0,$ i.e., $f(\partial)=k\left(1+\dfrac{1-c}{d}\partial\right)$, where $f(0)=k\in \C$. By straightforward calculations, we can see  that such a polynomial always satisfies (\ref{changeofbasisequation}).

For $d=0$, let us assume that $ f(\partial)=\sum_i a_i \partial^i$. Note that  (\ref{changeofbasisequation}) is equivalent to 
\begin{equation*}
a_i\left[(\lambda+\partial)^i(c\lambda+\partial)+(-\lambda)^i(c\lambda+c\partial-\partial)-(2\lambda+\partial)\partial^i\right]=0
\end{equation*} 
for all $i$. By straightforward calculations, we have $$(\lambda+\partial)^i(c\lambda+\partial)+(-\lambda)^i(c\lambda+c\partial-\partial)-(2\lambda+\partial)\partial^i\neq 0\mbox{~for~} i\geq 4.$$ Thus $a_i=0$ for $i\geq 4$, and $f(\partial)=a_0+a_1\partial+a_2\partial^2+ a_3\partial^3$. Then  (\ref{changeofbasisequation}) gives us
$$a_0(c-1)(2\lambda+\partial)+a_2c(2\lambda+\partial)(\lambda^2+\lambda\partial)+a_3(c+1)(2\lambda+\partial)(\lambda^2+\lambda\partial)\partial=0, $$
i.e.,  $a_1\in \C$ and  $a_0(c-1)=a_2c=a_3(c+1)=0$.   
\end{proof}
 \begin{theorem}\label{maintheoremautogroup}
 The automorphism groups of rank two Lie conformal algebras are:
 \begin{itemize}
 	\item[(1)] $\Aut \,R_c\cong GL(2, \C[\partial]), \Aut\, R_{ss}\cong \Z_2$, and $\Aut\,R_{cs}\cong  \C^\times$. 
 
 	\item[(2)]  $\Aut\, R_{nil}(Q(\lambda, \partial))\cong \C^\times \ltimes \C[\partial]$. 
 	
  	\item[(3)] $\Aut \, R_{sol}(a(\lambda))\cong \C^\times\ltimes \C$. 
  	\item[(4)]  
  	$$\Aut\, R(c, d, Q_c(\lambda, \partial))\cong \begin{cases}
  	\C^\times\ltimes \C &\mbox{~if~} d\neq 0, \\
  	\C &\mbox{~if~} d=0, c\notin \{1, 0, -1\}, Q_c(\lambda, \partial)\neq 0,\\
  	\C^\times\ltimes \C   &\mbox{~if~} d=0, c\notin \{1, 0, -1\}, Q_c(\lambda, \partial)=0,\\
  	\C^2  &\mbox{~if~} d=0, c\in \{1, 0, -1\}, Q_c(\lambda, \partial)\neq 0,\\
  	\C^\times \ltimes \C^2   &\mbox{~if~} d=0, c\in \{1, 0, -1\}, Q_c(\lambda, \partial)= 0.  
  	\end{cases}$$ 
 \end{itemize}
 \end{theorem}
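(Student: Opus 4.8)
The plan is to reduce every case to the normal form for automorphisms supplied by \Cref{keylemmaforautomorphisms} and then to read off the constraints on the surviving parameters by imposing compatibility with the $\lambda$-brackets. Part (1) needs nothing new: the isomorphisms $\Aut R_c\cong GL(2,\C[\partial])$, $\Aut R_{ss}\cong\Z_2$, and $\Aut R_{cs}\cong\C^\times$ are exactly the direct computations carried out immediately before the statement, so I would simply cite them. For each of the remaining families $R_{nil}(Q(\lambda,\partial))$ (with $Q\neq 0$), $R_{sol}(a(\lambda))$, and $R(c,d,Q_c(\lambda,\partial))$, \Cref{keylemmaforautomorphisms} lets me write an arbitrary automorphism as $\varphi(A)=k_1A$ and $\varphi(B)=k_2B+p(\partial)A$ with $k_1,k_2\in\C^\times$ and $p\in\C[\partial]$, so the whole problem becomes the identification of which triples $(k_1,k_2,p)$ are admissible.

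The first step is to substitute this form into the three bracket-preservation identities $\varphi([X_\lambda Y])=[\varphi(X)_\lambda\varphi(Y)]$ for $X,Y\in\{A,B\}$, expanding the right-hand sides using sesquilinearity (\ref{eq:sesquilinearity}) and skew-symmetry. The $[A_\lambda A]$ identity is automatic. The $[B_\lambda A]$ identity is where $k_2$ is pinned: for $R_{nil}$ it imposes nothing, and then the $[B_\lambda B]$ identity forces $k_1=k_2^2$ while leaving $p$ completely free, giving the admissible set $(k_2,p)\in\C^\times\times\C[\partial]$; for both $R_{sol}$ and $R(c,d,Q_c)$ it forces $k_2=1$. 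For $R_{sol}(a(\lambda))$ the $[B_\lambda B]$ identity then collapses to $p(\lambda+\partial)a(\lambda)=p(-\lambda)a(-\lambda-\partial)$, and the substitution $\partial\mapsto-\lambda-\mu$ turns this into $p(-\mu)a(\lambda)=p(-\lambda)a(\mu)$, forcing $p(\partial)=\kappa\,a(-\partial)$ for a single scalar $\kappa$ while $k_1$ stays free.

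The heart of the proof is the $[B_\lambda B]$ identity in case (4). After setting $k_2=1$ and cancelling the common term $(2\lambda+\partial)B$, it collapses to the master equation
\begin{equation*}
(k_1-1)Q_c(\lambda,\partial)=p(\lambda+\partial)(c\lambda+d+\partial)+p(-\lambda)(c\lambda+c\partial-d-\partial)-(2\lambda+\partial)p(\partial),
\end{equation*}
whose right-hand side is precisely the expression analysed in \Cref{finalcor} and \Cref{finallemma}. I would split on whether $Q_c\equiv 0$. If $Q_c\neq 0$ (so $d=0$ and $c\in\{1,0,-1,-4,-6\}$), then $k_1\neq 1$ would exhibit $Q_c$ as such a right-hand side, contradicting \Cref{finalcor}; hence $k_1=1$, the master equation becomes (\ref{changeofbasisequation}), and \Cref{finallemma} gives its solution space for $p$ --- two-dimensional for $c\in\{1,0,-1\}$ and one-dimensional for $c\in\{-4,-6\}$. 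If $Q_c\equiv 0$, the left-hand side vanishes, $k_1\in\C^\times$ is free, and $p$ ranges over the solution space of (\ref{changeofbasisequation}), which by \Cref{finallemma} is one-dimensional when $d\neq0$ or $c\notin\{1,0,-1\}$ and two-dimensional when $d=0$ and $c\in\{1,0,-1\}$.

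The final step is to identify the abstract group in each family by computing the composition law directly on the parameters. In every case one finds that the multiplicative parameter acts by scaling on the additive $p$-parameter; for $R_{nil}$, where $k_1=k_2^2$, the computation must keep track of the factor $k_2$ arising from $\varphi(B)$, but a direct conjugation check shows the additive subgroup is still normal with the scaling action, so the group remains a semidirect product. This yields $\C^\times\ltimes\C[\partial]$ for $R_{nil}$, $\C^\times\ltimes\C$ for $R_{sol}$, and the five tabulated outcomes for $R(c,d,Q_c)$, where a pure additive group $\C$ or $\C^2$ appears exactly when $k_1$ is forced to equal $1$ (i.e. when $Q_c\neq 0$) and a semidirect product appears otherwise. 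I expect the only real difficulty to be bookkeeping rather than conceptual: one must expand the skew term $[p(\partial)A_\lambda B]$ carefully, handling its internal $\partial$ via sesquilinearity, so that the master equation lines up verbatim with the hypotheses of \Cref{finalcor} and \Cref{finallemma}, and then match each dimension count against the correct row of the table.
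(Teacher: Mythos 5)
Your proposal is correct and follows essentially the same route as the paper: reduce via \Cref{keylemmaforautomorphisms} to the upper-triangular form $\varphi(A)=k_1A$, $\varphi(B)=k_2B+p(\partial)A$, pin $k_2$ and derive the master equation from bracket preservation, and then invoke \Cref{finalcor} to force $k_1=1$ when $Q_c\neq 0$ and \Cref{finallemma} to determine the admissible $p(\partial)$ in each case. The only cosmetic difference is that you extract $k_2=1$ for $R(c,d,Q_c(\lambda,\partial))$ from the $[B_\lambda A]$ identity while the paper reads it off the coefficient of $B$ in the $[B_\lambda B]$ identity; both work.
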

\begin{proof}
Part $(1)$ is done in the beginning of this section. For $(2)$--$(4)$, we denote by $\{A, B\}$ the basis satisfying the normalized $\lambda$-brackets in \Cref{maintheorem}. For example,  we assume that $[A_\lambda A]=0, [B_\lambda A]=(c\lambda+d+\partial)A$ and $[B_\lambda B]=Q_c(\lambda, \partial)A+(2\lambda+\partial)B$ for $R(c, d, Q_c(\lambda, \partial))$. 

Let $\varphi$ be an automorphism of $R$. Then \Cref{keylemmaforautomorphisms} implies that $\varphi(A)=k_1A$ and $\varphi(B)=k_2B+f(\partial)A$ for some $k_1, k_2\in \C^\times$ and $f(\partial)\in \C[\partial]$. We use the matrix $\begin{pmatrix}
k_1& f(\partial)\\
0& k_2
\end{pmatrix}$ to represent $\varphi$. Note that $\varphi([A_\lambda A])=[\varphi(A)_\lambda \varphi(A)]$ is already satisfied when $\varphi$ is of the above form. So we only need to consider $\varphi([B_\lambda A])=[\varphi(B)_\lambda \varphi(A)]$ and $\varphi([B_\lambda B])=[\varphi(B)_\lambda \varphi(B)]$. 

For $R=R_{nil}(Q(\lambda, \partial))$, $\varphi([B_\lambda A])=[\varphi(B)_\lambda \varphi(A)]$ is satisfied and $\varphi([B_\lambda B])=[\varphi(B)_\lambda \varphi(B)]$ implies that $k_1=k_2^2$. Thus $$\Aut\, R_{nil}(Q(\lambda, \partial))\cong \left\{\begin{pmatrix}
k_2^2& f(\partial)\\
0& k_2
\end{pmatrix}~|~ k_2\in \C^\times,  f(\partial)\in \C[\partial]\right\}\cong \C^\times \ltimes \C[\partial]. $$

For $R=R_{sol}(a(\lambda))$,  $\varphi([B_\lambda A])=[\varphi(B)_\lambda \varphi(A)]$ implies that $k_2=1$ and then $\varphi([B_\lambda B])=[\varphi(B)_\lambda \varphi(B)]=0$ implies that $f(\lambda+\partial)a(\lambda)=f(-\lambda)a(-\lambda-\partial)$. Since $a(\lambda)\neq 0$, we get $f(\partial)=ka(-\partial)$ for some $k\in \C$. 
Thus
$$\Aut\, R_{sol}(a(\lambda))\cong \left\{\begin{pmatrix}
k_1& ka(-\partial)\\
0& 1
\end{pmatrix}~|~ k_1\in \C^\times, k\in \C\right\}\cong \C^\times \ltimes \C. $$

For $R=R(c, d, Q_c(\lambda, \partial))$, $\varphi([B_\lambda B])=[\varphi(B)_\lambda \varphi(B)]$ implies that $k_2=1$ and \begin{equation}\label{finalequation}
(k_1-1)Q_c(\lambda+\partial)=f(\lambda+\partial)(c\lambda+d+\partial)+f(-\lambda)(c\lambda+c\partial-d-\partial)-(2\lambda+\partial)f(\partial).
\end{equation}
Note that when $k_2=1$, $\varphi([B_\lambda A])=[\varphi(B)_\lambda \varphi(A)]$ is also satisfied.

We show that $(k_1-1)Q_c(\lambda+\partial)\equiv 0$. For $d\neq 0$, we already have $Q_c(\lambda, \partial)=0$, so we are done. For $d=0$, if $(k_1-1)Q_c(\lambda, \partial)\neq 0$,  then $Q_c(\lambda, \partial)$ is of the form $S^c_{f(\partial)}(\lambda)$ as in \Cref{finalcor}, which is impossible. Hence both sides of (\ref{finalequation}) must be zero. Thus $k_1=1$ if $Q_c(\lambda, \partial)\neq 0$ and $k_1\in \C^\times$ if $Q_c(\lambda, \partial)=0$. The polynomial  $f(\partial)$ is given in  \Cref{finallemma}. 

For $d\neq 0$,  we have $k_1\in \C^\times$ since $Q_c(\lambda, \partial)= 0$. By \Cref{finallemma}, we have $f(\partial)=k\left(1-\dfrac{c-1}{d}\partial\right)$ for some $k\in \C$, hence
$$\Aut\, R(c, d, 0)\cong \left\{\begin{pmatrix}
k_1& k\left(1-\dfrac{c-1}{d}\partial\right)\\
0& 1
\end{pmatrix} ~|~ k_1\in \C^\times, k\in \C\right\}\cong \C^\times\ltimes \C.$$
Using similar arguments, we have the following list of automorphism groups of $R(c, d, Q_c(\lambda, \partial))$ for $d=0$. 

\begin{itemize}
\item[(i)] If  $c\notin \{1, 0, -1\} \text{ and } Q_c(\lambda, \partial)\neq 0$, then $k_1=1$ and $f(\partial)=k\partial$, hence 
$$\Aut\, R(c, 0, Q_c(\lambda,\partial))\cong \left\{\begin{pmatrix}
1& k\partial\\
0& 1
\end{pmatrix} ~|~ k\in \C \right\}\cong \C. $$
	
\item[(ii)] If $c\notin \{1, 0, -1\} \text{ and } Q_c(\lambda, \partial)=0$, then $k_1\in \C^\times$ and $f(\partial)=k\partial$, hence 
$$\Aut\, R(c, 0, Q_c(\lambda,\partial))\cong \left\{\begin{pmatrix}
k_1& k\partial\\
0& 1
\end{pmatrix} ~|~k_1\in \C^\times, k\in \C \right\}\cong \C^\times \ltimes \C. $$

\item[(iii)] If $c\in \{1, 0, -1\} \text{ and } Q_c(\lambda, \partial)\neq 0$, then $k_1=1$ and $f(\partial)=a_1\partial+\delta_{c, 1}a_0+\delta_{c, 0}a_2\partial^2+\delta_{c, -1}a_3\partial^3$ where $a_i\in \C$, hence $$\Aut\, R(c, 0, Q_c(\lambda,\partial))\cong \left\{\begin{pmatrix}
1& f(\partial)\\
0& 1
\end{pmatrix} \right\}\cong \C^2. $$

\item[(iv)] If $c\in\{1, 0, -1\} \text{ and } Q_c(\lambda, \partial)=0$, then $k_1\in \C^\times$ and $f(\partial)=a_1\partial+\delta_{c, 1}a_0+\delta_{c, 0}a_2\partial^2+\delta_{c, -1}a_3\partial^3$ where $a_i\in \C$, hence 
$$\Aut\, R(c, 0, Q_c(\lambda,\partial))\cong \left\{\begin{pmatrix}
k_1& f(\partial)\\
0& 1
\end{pmatrix} ~|~k_1\in \C^\times \right\}\cong \C^\times \ltimes \C^2. $$
\end{itemize}
\end{proof}

\bibliographystyle{amsplain}

\end{document}